\title{\large Clifford-symmetric polynomials}
\author{Fabian Lenzen\thanks{fabian.lenzen@tum.de}}
\date{\today}
\renewcommand{\bfseries}{\fontseries{b}\selectfont}
\SetMathAlphabet{\mathbf}{normal}{OT1}{lmr}{b}{n}
\SetMathAlphabet{\mathbf}{bold}{OT1}{lmr}{bx}{n}
\SetMathAlphabet{\mathsf}{bold}{OT1}{lmss}{b}{n}
\SetMathAlphabet{\mathit}{bold}{OT1}{lmr}{b}{it}
\DeclareFontFamily{U}{mathc}{}
\DeclareFontShape{U}{mathc}{m}{it}{<->s*[1.03] mathc10}{}
\DeclareMathAlphabet{\mathcalx}{U}{mathc}{m}{it}
\let\include\input
\newcolumntype{Q}[1]{>{\centering\arraybackslash$}p{#1}<{$}} 
\newcolumntype{C}{>{$}Sc<{$}}                                
\setlist[itemize]{itemsep=0pt}
\newtheoremstyle{theorem}{}{}{}{}{\itshape}{.}{ }{}
\theoremstyle{theorem}
\newtheorem*{theorem*}{Theorem}
\newtheorem{theorem}{Theorem}[section]
\newtheorem{lemma}[theorem]{Lemma}
\newtheorem*{proposition*}{Proposition}
\newtheorem{proposition}[theorem]{Proposition}
\newtheorem{corollary}[theorem]{Corollary}
\newtheorem*{definition*}{Definition}
\newtheorem{definition}[theorem]{Definition}
\newtheorem{definition-lemma}[theorem]{Definition and Lemma}
\newtheorem{example}[theorem]{Example}
\newtheorem{remark}[theorem]{Remark}
\newtheorem*{claim}{Claim}
\patchcmd{\thmhead}{(#3)}{#3}{}{}
\crefname{lemma}{Lemma}{Lemmas}
\crefname{observation}{Observation}{Observations}
\crefname{notation}{Notation}{Notations}
\crefname{caveat}{Caveat}{Caveats}
\newlist{thmlist}{enumerate}{1}
\setlist[thmlist]{label=\textup{(\roman*)}, ref=\thetheorem.\textup{(\roman*)}, noitemsep}
\newlist{prooflist}{enumerate}{1}
\setlist[prooflist]{label=\textup{(\roman{prooflisti})}, ref=(\roman{prooflisti}),noitemsep}
\crefname{prooflisti}{part}{parts}
\newlist{thmlist*}{enumerate*}{1}
\setlist[thmlist*]{label=\textup{(\roman{thmlist*i})}, ref=\thetheorem.(\roman{thmlist*i}),noitemsep}
\newcommand\strongnopagebreak{\nopagebreak\@afterheading} 
\numberwithin{equation}{section}
\numberwithin{table}{section}
\numberwithin{figure}{section}
\newcommand\showtimer{%
	\pdfresettimer}
\newcommand\ie{i.\,e\@ifnextchar.{}{.\@}}
\newcommand\cf{cf\@ifnextchar.{}{.\@}}
\newcommand\eg{e.\,g\@ifnextchar.{}{.\@}}
\newcommand\wrt{w.\,r.\,t\@ifnextchar.{}{.\@}}
\newcommand{\resp}{resp\@ifnextchar.{}{.\@}}
\newcommand{\Wlog}{W.\,l.\,o.\,g\@ifnextchar.{}{.\@}}
\newcommand{\wlofg}{w.\,l.\,o.\,g\@ifnextchar.{}{.\@}}
\newcommand{\ses}{s.\,e.\,s\@ifnextchar.{}{.\@}}
\newcommand{\st}{s.\,t\@ifnextchar.{}{.\@}}
\DeclareMathOperator{\Hom}{Hom}
\DeclareMathOperator{\End}{End}
\newcommand{\id}{\mathrm{id}}%
\DeclareMathOperator{\im}{im}
\DeclareMathOperator{\rk}{rk}
\DeclareMathOperator{\ord}{ord}
\newcommand{\asteq}{\mathbin{\smash{\stackrel{*}{=}}}}
\newcommand{\isom}{\cong}
\newcommand{\ldot}{\mathbin{.}}
\newcommand{\conn}{\mathrel{-}}
\newcommand{\disc}{\not\conn}
\newcommand{\NH}{\operatorname{\mathsf{NH}}}
\newcommand{\NC}{\operatorname{\mathsf{NC}}}
\newcommand{\Pol}{\operatorname{\mathrm{Pol}}}
\newcommand{\SPol}{\mathop{\Lambda\operator@font Pol}\nolimits}
\newcommand{\ONH}{\operatorname{\mathsf{ONH}}}
\newcommand\pty[1]{\lvert#1\rvert}
\newcommand{\cl}{\mathfrak{c}}
\newcommand{\Cl}{\mathfrak{C}}
\newcommand{\sH}{\operatorname{\tilde{\mathsf{H}}}}
\newcommand{\sPol}{\operatorname{\widetilde{\mathrm{Pol}}}}
\newcommand{\delC}{\mathfrak d}
\newcommand{\epsC}{\mathfrak e}
\newcommand{\NHC}{\operatorname{\mathsf{NH}\mathfrak C}}
\newcommand{\NCC}{\operatorname{\mathsf{NC}\mathfrak C}}
\newcommand{\HC}{\operatorname{\mathsf{H}\mathfrak C}}
\newcommand{\PolC}{\operatorname{\mathrm{Pol}\mathfrak C}}
\newcommand{\SPolC}{\mathop{\Lambda{\operator@font Pol}\mathfrak C}\nolimits}
\newcommand{\HGrC}{\mathop{\mathit{H}\mathfrak C}\nolimits}
\newcommand{\Gr}{\operatorname{Gr}}
\newcommand{\Fl}{\operatorname{Fl}}
\newcommand{\sZ}{\mathbf{Z}/2\mathbf{Z}}
\newcommand{\ksVS}{k\mhyphen\mathrm{sVect}}
\newcommand{\whitelozenge}{%
	\ooalign{$\textcolor{white}{\blacklozenge}$\cr${\lozenge}$}%
}
\newcommand{\whitebullet}{%
	\ooalign{$\textcolor{white}{\bullet}$\cr${\circ}$}%
}
\newcommand{\subalign}[1]{%
	\vcenter{%
		\Let@ \restore@math@cr \default@tag
		\baselineskip\fontdimen10 \scriptfont\tw@
		\advance\baselineskip\fontdimen12 \scriptfont\tw@
		\lineskip\thr@@\fontdimen8 \scriptfont\thr@@
		\lineskiplimit\lineskip
		\ialign{\hfil$\m@th\scriptstyle##$&&$\m@th\scriptstyle##$\hfil\crcr
			#1\crcr
		}%
	}
}
\newcommand\restrict[2]{{%
	\left.\kern-\nulldelimiterspace %
	#1 %
	\right|_{#2} %
}}
\newcommand{\xmapsfrom}[2][]{%
	\ext@arrow3095\leftarrowfill@{#1}{#2}\mapsfromchar
}
\newcommand{\xtofrom}[2][]{%
	\mathrel{%
		\raise.4ex\hbox{%
			$\ext@arrow 0395\rightarrowfill@{\phantom{#1}}{#2}$}%
		\setbox0=\hbox{%
			$\ext@arrow 0395\leftarrowfill@{#1}{\phantom{#2}}$}%
		\kern-\wd0 \lower.4ex\box0%
	}%
}
\newcommand{\mapstofrom}{%
	\mathrel{\ooalign{%
		\raise.4ex\hbox{$\mapsto$}\cr%
		\raise-.4ex\hbox{$\mapsfrom$}\cr}%
	}%
}
\DeclareRobustCommand{\longinto}{%
  \mathrel{\mathpalette\shook@rightarrow\relax}%
}
\newcommand{\shook@rightarrow}[2]{%
  \sbox\z@{$\m@th#1\lhook$}%
  {\lhook}%
  \kern-.25\wd\z@
  {\smash{\clipbox{{.75\wd\z@} 0pt 0pt {-\width}}{$\m@th#1\longrightarrow$}}}%
}
\newcommand*{\relrelbarsep}{.386ex}
\newcommand*{\relrelbar}{%
  \mathrel{%
    \mathpalette\@relrelbar\relrelbarsep
  }%
}
\newcommand*{\@relrelbar}[2]{%
  \raise#2\hbox to 0pt{$\m@th#1\relbar$\hss}%
  \lower#2\hbox{$\m@th#1\relbar$}%
}
\providecommand*{\rightrightarrowsfill@}{%
  \arrowfill@\relrelbar\relrelbar\rightrightarrows
}
\providecommand*{\xrightrightarrows}[2][]{%
  \ext@arrow 0359\rightrightarrowsfill@{#1}{#2}%
}
\let\into\hookrightarrow
\let\xto\xrightarrow
\let\longto\longrightarrow
\DeclareRobustCommand\vdots{%
  \mathpalette\@vdots{}%
}
\newcommand*{\@vdots}[2]{%
	\sbox0{$#1\cdotp\cdotp\cdotp\m@th$}%
	\sbox2{$#1.\m@th$}%
	\vbox{%
	\dimen@=\wd0 %
	\advance\dimen@ -3\ht2 %
	\kern.5\dimen@
	\dimen@=\wd2 %
	\advance\dimen@ -\ht2 %
	\dimen2=\wd0 %
	\advance\dimen2 -\dimen@
	\vbox to \dimen2{%
		\offinterlineskip
		\copy2 \vfill\copy2 \vfill\copy2 %
	}%
	}%
}
\DeclareRobustCommand\ddots{%
	\mathinner{%
		\mathpalette\@ddots{}%
		\mkern\thinmuskip
	}%
}
\newcommand*{\@ddots}[2]{%
	\sbox0{$#1\cdotp\cdotp\cdotp\m@th$}%
	\sbox2{$#1.\m@th$}%
	\vbox{%
		\dimen@=\wd0 %
		\advance\dimen@ -3\ht2 %
		\kern.5\dimen@
		\dimen@=\wd2 %
		\advance\dimen@ -\ht2 %
		\dimen2=\wd0 %
		\advance\dimen2 -\dimen@
		\vbox to \dimen2{%
			\offinterlineskip
			\hbox{$#1\mathpunct{.}\m@th$}%
			\vfill
			\hbox{$#1\mathpunct{\kern\wd2}\mathpunct{.}\m@th$}%
			\vfill
			\hbox{$#1\mathpunct{\kern\wd2}\mathpunct{\kern\wd2}\mathpunct{.}\m@th$}%
		}%
	}%
}
\let\term\emph
\newcommand{\proofsection}[1]{\par\vspace\topsep\noindent{}---\textit{#1}:}
\newcommand{\within}[2]{\mathrel{\mathmakebox[\widthof{$#1$}][r]{#2}}}
\newcommand{\eqsp}{\mathrel{\phantom{=}}}
\newcommand{\quot}[2]{\left.\raisebox{.2em}{$#1$}\middle/\raisebox{-.2em}{$#2$}\right.}
\newcommand\casesGap{\hphantom{\left\{\rule{0pt}{1cm}\right. \kern-\nulldelimiterspace}}
\newlength{\algnRef}
\newlength{\algnRefB}
\mathchardef\mhyphen="2D
\newcommandx{\Mod}[3][1={},3={}]{%
	\ifthenelse{\isempty{#2}}{}{#2\mhyphen}%
	{\operator@font#1Mod}%
	\ifthenelse{\isempty{#3}}{}{\mhyphen#3}%
}
\newcommand{\Proj}[1]{%
	#1\mhyphen{\operator@font Proj}
}
\newcommand{\VS}[1]{%
	#1\mhyphen{\operator@font Vect}
}
\newcommand{\p}{\mathfrak{p}}
\let\epsilon\varepsilon
\tikzset{
	|/.tip={Bar[width=.8ex,round]},
	back line/.style={densely dotted},
	cross line/.style={%
		preaction={%
			draw=white,
			-, 
			line width=6pt
		}
	},
	brace tip/.style = {
		sloped, allow upside down, yshift=+1ex
	},
	brace tip'/.style = {
		sloped, allow upside down, yshift=-1ex
	},
	brace/.style={
		decoration={calligraphic brace, amplitude=.8ex},
		decorate,
		line width=.3ex
	},
	brace'/.style={
		decoration={calligraphic brace, amplitude=.8ex, mirror},
		decorate,
		line width=.3ex
	},
	braced box/.style = {
		rectangle,
		inner sep=0mm,
		append after command = {
			\pgfextra{
				\draw[brace] (\tikzlastnode.north east) to node[xshift=1ex] (#1){} (\tikzlastnode.south east);
			}
		}
	},
	braced box'/.style = {
		rectangle,
		inner sep=0mm,
		append after command = {
			\pgfextra{
				\draw[brace] (\tikzlastnode.south west) to node[xshift=-1ex] (#1){} (\tikzlastnode.north west);
			}
		}
	},
	braced box''/.style = {
		rectangle,
		inner sep=0mm,
		append after command = {
			\pgfextra{
				\draw[brace] (\tikzlastnode.north west) to node[yshift=1ex] (#1){} (\tikzlastnode.north east);
			}
		}
	},
	braced box'''/.style = {
		rectangle,
		inner sep=0mm,
		append after command = {
			\pgfextra{
				\draw[brace] (\tikzlastnode.south east) to node[yshift=-1ex] (#1){} (\tikzlastnode.south west);
			}
		}
	},
	mth/.style = {
		commutative diagrams/every diagram,
		every path/.style={
			commutative diagrams/.cd, every arrow, every label, arrows=dash, tips=on proper draw
		},
		matrix of nodes/.append style={
			nodes={font=\normalsize}
		}
	}
}
\tikzset{
	diag/.style={
		execute at begin picture={%
			\let\oldblacklozenge\blacklozenge%
			\let\oldlozenge\lozenge%
			\newcommand{\blacklozengeX}{\scalebox{.65}{$\oldblacklozenge$}}%
			\newcommand{\lozengeX}{\scalebox{.65}{$\oldlozenge$}}%
			\newcommand{\decofactor}{1}
			\let\oldbullet\bullet%
			\let\oldcirc\circ
			\renewcommand{\blacklozenge}{\scalebox{\decofactor}{$\blacklozengeX$}}%
			\renewcommand{\lozenge}{\scalebox{\decofactor}{$\lozengeX$}}%
			\renewcommand{\bullet}{\scalebox{\decofactor}{$\oldbullet$}}%
			\renewcommand{\circ}{\scalebox{\decofactor}{$\oldcirc$}}%
		},
		x=5mm,
		y=5mm,
		baseline=-.5ex,
		curved/.style={looseness=1.7},
		rot/.style={sloped},
		deco/.style={
			font=\normalsize,
			every deco,
			inner sep=0pt,
			label distance=0pt
		},
		every node/.append style={
			inner sep=0pt,
			font=\scriptsize
		},
		below/.default={
			1.2mm
		},
		above/.default={
			1.2mm
		},
		every label/.append style={
			font=\scriptsize,
			inner sep=0,
			label distance=1mm
		},
		smaller/.style={
			scale=.8,
			every deco/.append style={
				execute at begin node={
					\renewcommand{\decofactor}{.9}
				}
			}	
		},
		inline/.style={
			x=1ex, y=1ex,
			every deco/.append style={
				execute at begin node={
				}
			},
			left/.default={.5ex},
			right/.default={.5ex}
		},
		every diag,
		every inline diag,
	},
	every deco/.style={},
	every diag/.style={},
	every inline diag/.style={}
}
\tikzset{
  bezier/controls/.code args={(#1) and (#2)}{
    \def\mystartcontrol{#1}
    \def\mytargetcontrol{#2}
  },
  bezier/limit/.store in=\mylimit,
  bezier/limit=1cm,
  bezier/.code={
    \tikzset{bezier/.cd,#1}
    \tikzset{
      to path={
        let
        \p0=(\tikztostart),    \p1=(\mystartcontrol),
        \p2=(\mytargetcontrol), \p3=(\tikztotarget),
        \n0={veclen(\x1-\x0,\y1-\y0)},
        \n1={veclen(\x3-\x2,\y3-\y2)},
        \n2={\mylimit}
        in  \pgfextra{
          \pgfmathtruncatemacro\ok{max((\n0>\n2),(\n1>\n2))}
        }
        \ifnum\ok=1 %
        let
        \p{01}=($(\p0)!.5!(\p1)$), \p{12}=($(\p1)!.5!(\p2)$), \p{23}=($(\p2)!.5!(\p3)$),
        \p{0112}=($(\p{01})!.5!(\p{12})$), \p{1223}=($(\p{12})!.5!(\p{23})$),
        \p{01121223}=($(\p{0112})!.5!(\p{1223})$)
        in
        to[bezier={controls={(\p{01}) and (\p{0112})}}]
        (\p{01121223})
        to[bezier={controls={(\p{1223}) and (\p{23})}}]
        (\p3)
        \else
        [overlay=false] .. controls (\p1) and (\p2) ..  (\p3) [overlay=true]
        \fi
      },
    },
  },
  limit bb/.style n args={2}{
    overlay,
    decorate,
    decoration={
      show path construction,
      moveto code={},
      lineto code={\path[#2] (\tikzinputsegmentfirst) -- (\tikzinputsegmentlast);},
      curveto code={
        \path[#2]
        (\tikzinputsegmentfirst)
        to[bezier={limit=#1,controls={(\tikzinputsegmentsupporta) and (\tikzinputsegmentsupportb)}}]
        (\tikzinputsegmentlast);
      },
      closepath code={\path[#2] (\tikzinputsegmentfirst) -- (\tikzinputsegmentlast);},
    },
  },
  limit bb/.default={1mm}{draw},
}
	\tikzset{
		limit bb/.style n args={2}{},
		out/.style={},
		in/.style={},
		in looseness/.style={},
		out looseness/.style={},
		looseness/.style={},
		brace/.style={},
		brace'/.style={}
	}
\tikzset{
	to start/.style={pos=0.375},
	to end/.style={pos=0.625}
}
\begin{document}
\maketitle

\begin{abstract}
	\noindent
	{\bfseries Abstract}\enspace
	Based on the NilHecke algebra $\NH_n$,
	the odd NilHecke algebra developed by Ellis, Khovanov and Lauda,
	and on Kang, Kashiwara and Tsuchioka's quiver Hecke superalgebra,
	we develop the Clifford Hecke superalgebra $\NHC_n$
	as another super-algebraic analogue of $\NH_n$.
	We show that there is a notion of symmetric polynomials fitting in this picture,
	and we prove that these are generated by an appropriate analogue of elementary symmetric polynomials,
	whose properties we shall discuss in this text.
\end{abstract}

\section{Introduction}
For the entire paper, let $k$ be  a field with $\operatorname{char} k \neq 2$.
Consider the polynomial ring $\Pol_n≔k[x_1,\dotsc,x_n]$.
The symmetric group $S_n$ acts on $\Pol_n$ by interchanging indeterminates.
Let $\alpha_i=x_{i+1}-x_i$; then $\Pol_n$ has a decomposition $\Pol_n=\Pol_n^{s_i}\oplus \alpha_i\Pol_n^{s_i}$
into direct summands on which the simple transposition $s_i$ acts by $\id$ and $-\id$, respectively.

The \term{NilHecke algebra $\NH_n$ of type $\mathrm{A}_n$} is the $k$-subalgebra of $\End_k(\Pol_n)$
generated by multiplication with the indeterminates $x_i$, and by the \term{Demazure operators}
$\partial_i\colon \Pol_n\to\Pol_n^{s_i}, f\mapsto \frac{f-s_i(f)}{x_{i+1}-x_i}$, for $i = 1,\dotsc, n-1$.
The latter assigns to a polynomial its $s_i$-invariant part
and satisfy $\ker\partial_i=\alpha_i\Pol_n^{s_i}$
\autocites[§2]{Demazure:Invariants}.
They are $s_i$-derivations
(\ie, $\partial_i(fg)=\partial_i(f)g+s_i(f)\partial(g)$)
and satisfy $\partial_i^2=0$.
Thus, $\NH_n$ is the $k$-algebra generated by $x_1,\dotsc,\dotsc, x_n$ and $\partial_1,\dotsc,\partial_{n-1}$ and subject to the relations
\begin{alignat*}{4}
		x_ix_j &= x_jx_i &~& \text{for all $1 \leq i,j \leq n$,} & \qquad\qquad
			\partial_i\partial_{i+1}\partial_i &= \partial_{i+1}\partial_i\partial_{i+1} &~& \text{for all $1 \leq i < n-1$,}\quad\\
		\partial_i x_i - x_{i+1}\partial_i &= -1 && \text{for all $1 \leq i < n$,} & \qquad\qquad
			\partial_i\partial_j &= \partial_j\partial_i&& \text{if $j\neq i, i+1$,}\\
		\partial_i x_{i+1} - x_i\partial_i &= 1 && \text{for all $1 \leq i < n$,} &
			\partial_i^2 &= 0 && \text{for all $i$,}\\
		\partial_ix_j - x_j\partial_i &= 0 &~& \text{if $j\neq i, i+1$.}
\end{alignat*}
which acts faithfully on the polynomial ring $\Pol_n$.
The subalgebra $\NC_n$ of $\NH_n$ generated by the symbols $\partial_1,\dotsc,\partial_{n-1}$ is called \term{NilCoxeter algebra}.

The invariant algebra $\SPol_n≔\Pol_n^{S_n}$,
whose elements are called \term{symmetric polynomials},
is the largest subalgebra of $\Pol_n$ annihilated by $\NC_n$.
As a $k$-algebra, $\SPol_n$ is isomorphic to the polynomial ring
$k[\varepsilon_1^{(n)},\dotsc,\varepsilon_n^{(n)}]$ in the \term{elementary symmetric polynomials}
\begin{equation}
	\varepsilon^{(n)}_k = \sum_{\mathclap{1\leq i_1<\cdots<i_k\leq n}} x_{i_1}\dotsm x_{i_k},
\end{equation}
and to the polynomial ring $k[h_1^{(n)},\dotsc,h_n^{(n)}]$ in the \term{complete symmetric polynomials}
\begin{equation}
	h^{(n)}_k           = \sum_{\mathclap{1\leq i_1\leq \cdots\leq i_k\leq n}} x_{i_1}\dotsm x_{i_k}.
\end{equation}
As a $\SPol_n$-algebra, $\Pol_n$ is free of rank $n!$.

\Citeauthor{EKL:odd-NilHecke} have developed a theory of
\emph{odd} symmetric polynomials and have constructed an odd analogue $\ONH_n$ of the NilHecke algebra \autocite{EKL:odd-NilHecke}.
By \emph{odd}, it is meant that odd polynomials are generated by indeterminates $\xi_1,\dotsc,\xi_n$ that, instead of commutativity, satisfy the relation $\xi_i \xi_j = -\xi_j \xi_i$ whenever $i\neq j$.
Our goal is to mimick the theory of symmetric and odd symmetric polynomials
for a \term{super polymial algebra} $\sPol_I$ associated to a $\sZ$-graded index set $I=I_0\sqcup I_1$.
For $i \in I$, let $\pty{i}$ be the index such that $i \in I_{\pty{i}}$, called the \term{parity} of $i$.
Then $\sPol_I$ is the $\sZ$-graded algebra generated by indeterminates $\xi_i$ for $i\in I$, that are subject to the relations $\xi_i\xi_j=(-1)^{\pty{i}\pty{j}}\xi_j\xi_i$.

In \cref{sec:superalgebra}, we start with a brief review on super algebras
and the construction of a quiver Hecke Clifford superalgebra $\NHC(C)$
from \autocite{KKT:Quiver-Hecke-Superalgebras}
in order to construct a faithful polynomial for $\NHC(C)$
along the lines of \autocite{KL:quantum-group-I} in \cref{sec:super-klr}.

In \cref{sec:clifford-symmetric-polynomials},
we develop our theory of Clifford polynomials $\PolC(I)$ and Clifford symmetric polynomials $\SPolC(I)$.
We show that the latter are generated by a generalisation of elementary symmetric polynomials
and that $\PolC(I)$ is a free $\SPolC(I)$-module.
Our main results, which are stated in \cref{prop:Clifford-Polynomials-free},
lead us to a super-analogue for the construction of cyclotomic quotients of the NilHecke algebra
and for the cohomology rings of Grassmannians and flag varieties,
which generalise from the purely odd constructions of the respective notions defined in \cite{EKL:odd-NilHecke},

\section{Superalgebras, supermodules and supercategories}
\label{sec:superalgebra}

A \term{$k$-super vector space} $V$ is a $\sZ$-graded $k$-vector space $V=V_0⊕V_1$.
The $\sZ$-degree of a homogeneous element $m$ is called its \term{parity}, which is denoted by $\pty{m}$.
A homogeneous element is called \term{even} and \term{odd} according to its parity.
The dimension of $V$ is written $\dim_k V=\dim_k V_0|\dim_k V_1$,
and we write $k^{m|n}$ for the canonical $m|n$-dimensional $k$-super vector space.
A morphism $f\colon V \to W$ of super vector spaces is a morphism $f$ of vector spaces that satisfies $f(V_p) \subseteq W_p$ for all $p \in \sZ$.
We denote the thus defined category of super vector spaces by $\ksVS$.

For $V, W \in \ksVS$, we denoted by $\hom_k(V, W)$ the super vector space with $\hom_k(V, W)_p = \{f \in \Hom_k{V, W} \mid f(V_q) \subseteq W_{p+q} \ \forall q \in \sZ \}$;
\ie, the super vector space whose even and odd parts consist of parity-preserving and -exchanging morphisms, respectively.
In particular, $\Hom_{\ksVS}(-,-) = \hom_k(-, -)_0$.

The tensor product $V⊗_k W$ with components $(V⊗_k W)_p ≔ ⨁_{q\in\mathbf Z/2\mathbf Z}V_q⊗W_{p-q}$ fits into the usual adjunction with $\hom_k(-,-)$.
Furthermore, with the braiding $V⊗W\to W⊗V, v⊗w ↦ (-1)^{\pty{v}\pty{w}}w⊗v$,
$\ksVS$ becomes a closed symmetric monoidal category;
\ie, the tensor product of morphisms satisfies
\begin{align}
	\label{eq:super-interchange}
	(f⊗g)(v⊗w)&=(-1)^{\pty{g}\pty{v}}f(v)⊗g(w) &
	(f⊗g)\circ(f'⊗g') &= (-1)^{\pty{f'}\pty{g}}f\!f'⊗gg'
\end{align}
for all homogeneous elements $v\in V, w\in W$
and homogeneous morphisms $f \in \hom_k(V', V)$, $f' \in \hom_k(V'', V')$, $g \in \hom_k(W', W)$ and $g' \in \hom_k(W'', W')$.

\begin{remark}
	Given finite dimensional super vector spaces $V=k^{n|m}$ and $W=k^{p|q}$ with homogeneous bases,
	a homomorphism in $\hom_k(V, W)$ can be written as an $(m|n)\times (p|q)$-block matrix
	\[
		\bordermatrix{
			& m	& n\cr
			p & T_{0,0} & T_{0,1}\cr
			q & T_{1,0} & T_{1,1}
		}
	\]
	where $\hom_k(V,W)_0$ and $\hom_k(V,W)_1$ comprise the diagonal and the antidiagonal blocks, respectively.
\end{remark}

\subsection{Superalgebras}
\begin{definition}
	A \term{superalgebra} $A$ over a field $k$ is a super vector space $A$ with a graded $k$-algebra structure.
	It is called \term{supercommutative} or simply \term{commutative}
	if the multiplication commutes with the braiding;
	\ie, $ab = (-1)^{\pty{a}\pty{b}}ba$ for all homogeneous $a,b∈A$.
	In particular, this means that in a commutative superalgebra,
	every odd element squares to zero.
	The tensor product of two superalgebras $A$ and $B$ (over $k$)
	carries a superalgebra structure by $(a⊗b)⋅(a'⊗b')=(-1)^{\pty{a'}\pty{b}}aa'⊗bb'$.
\end{definition}
\begin{example}
	For every super vector space $V$,
	the space $\hom_k(V,V)$ is a superalgebra.
	In general, it is not commutative, unless $V=k^{1\mid 0}$ or $V=k^{0\mid 1}$.
	For super vector spaces $V$ and $W$, the space $\hom_k(V,V)\otimes\hom_k(W,W)\subseteq\hom_k(V\otimes W, V\otimes W)$
	is a super subalgebra, according to \eqref{eq:super-interchange}.
\end{example}
\begin{example}
	The exterior algebra on $n$ generators
	is the $k$-algebra
	\[\Lambda[\omega_1, \dotsc, \omega_n] ≔ k⟨\omega_1, \dotsc, \omega_n⟩/(\omega_i\omega_j + \omega_j\omega_i)_{1≤i,j≤n}.\]
	It is a commutative superalgebra.
	The \term{polynomial superalgebra} on $m|n$ indeterminates is the commutative superalgebra
	$k[y_1,\dotsc, y_m|\omega_1,\dotsc, \omega_n] ≔ k[y_1,\dotsc, y_m] ⊗_k \Lambda[\omega_1,\dotsc, \omega_n]$.
\end{example}
\begin{example}
	\label{def:super-algebra:clifford-algebra}
	The \term{Clifford algebra} on $n$ generators is the $k$-superalgebra
	\begin{equation}
	\Cl_n≔⟨\cl_1,\ldots, \cl_n\mid \cl_i^2 = 1\ \text{and}\ \cl_i \cl_j = -\cl_j \cl_i\ \text{if $i\neq j$}⟩
	\end{equation}
	all of whose generators $\cl_i$ are odd.
	It is not supercommutative.
\end{example}
\begin{definition}
	Given two $k$-superalgebras $A$ and $B$, an \term{$A$-$B$-super bimodule} $M$
	is a super vector space endowed with \emph{even} superalgebra morphisms
	$A\to\hom_k(M,M)$ and $B^{\mathrm{op}}\to\Hom_k(M,M)$.
	According to \eqref{eq:super-interchange},
	a morphism $f$ of bimodules satisfies $f(amb) = (-1)^{\pty{f}\pty{a}} af(m)b$ for $a \in A$, $b \in B$.
	We denote the category of $A$-$B$-super bimodules by $\Mod[s]{A}[B]$.

	One-sided modules are defined in the obvious way.
	Over a supercommutative superalgebra,
	a left module obtains right module structure over the opposite superalgebra by imposing
	$m a = (-1)^{\pty{a}\pty{m}} am$.
	Tensor products of bimodules over superalgebras are defined in the usual way.
\end{definition}

\subsection{Supercategories}

\begin{definition}
	A $k$-linear \term{supercategory}, \term{superfunctor} and \term{supernatural transformation}
	respectively are a category, a functor and a natural transformation enriched in the monoidal category $\ksVS$.
	Explicitly:
	\begin{itemize}
	\item A \emph{supercategory} $\mathcal C$ is a category such that $\Hom_{\mathcal{C}}(X, Y)$ is a $k$-super vector space for all $X, Y \in \mathcal{C}$,
	and composition is an even map
	\[
		\circ_{X, Y, Z}\colon \Hom_\mathcal C(X, Y)\otimes_k\Hom_\mathcal C(Y, Z) \to \Hom_\mathcal C(X, Z)
	\]
	of $k$-super vector spaces for all $X, Y, Z \in \ksVS$.
	\item A \emph{superfunctor} $F\colon \mathcal{C} \to \mathcal{D}$ of supercategories is a functor such that the map $F_{XY}\colon \Hom_{\mathcal{C}}(X, Y) \to \Hom_{\mathcal{D}}(FX, FY)$ is an even map of super vector spaces for all $X, Y \in \mathcal{C}$.
	\item Let $F, G\colon \mathcal{C} \to \mathcal{G}$ be two superfunctors, then the space $\hom(F, G)$ of \emph{supernatural transformations} from $F$ to $G$ is the super vector space such that $\hom(F, G)_p$ is the $k$-vector space of all (ordinary) natural transformations
	that satisfy $\eta_{Y}\circ F(f)=(-1)^{p\pty{f}}\circ G(f)\circ \eta_{X}$ for all $X, Y \in \mathcal{C}$ and homogeneous $f \in \Hom_{\mathcal{C}}(X, Y)$.
	\end{itemize}
	A supercategory $\mathcal{C}$ that is also a monoidal category is a \term{monoidal supercategory}
	if its associators and unitors are even supernatural transformations and satisfy \eqref{eq:super-interchange};
	It is \term{closed} and \term{symmetric} if it is as an ordinary monoidal category;
	see \autocite{Bru:Supercategories} for a comprehensive treatment.
\end{definition}
\begin{example}
	The super vector spaces $\hom_k(-,-)$ turn $\ksVS$
	into a closed symmetric monoidal supercategory.
\end{example}
\begin{example}
	The category $\Mod[s]{A}$ of modules over a super $k$-algebra $A$
	is a closed symmetric monoidal supercategory, where $\Hom_A(M, N)$ is defined as the equalizer
	\[
		\Hom_A(M, N) ≔ \operatorname{eq}\Bigl(
			\hom_k(M, N)\xrightrightarrows[a_*]{a^*}\prod_{a\in A}\hom_k(M,N)
		\Bigr).
	\]
	The forgetful functor $\Mod[s]{A}\to\ksVS$ is a superfunctor.
\end{example}

\subsection{Super-Diagrams}
\label{sec:super-diagrams}
We draw vertical lines for morphisms; these are
placed next to each other to represent their tensor product.
The super interchange law \eqref{eq:super-interchange} is thus depicted as
\begingroup
\tikzset{every diag/.append style={smaller}}
\begin{equation*}
	\begin{array}{crccc}
	f⊗g &{}={}&
	(f⊗1)\circ(1⊗g) &{}={}&
	(-1)^{\pty{f}\pty{g}} (1⊗g)\circ(f⊗1)\\
	\tikz[diag]\draw	(0,-1) node[below]{$V$}  --node[fill=white, draw, inner sep=1pt]{$f$} (0,1) node[above]{$W$}
						(2,-1) node[below]{$V'$} --node[fill=white, draw, inner sep=1pt]{$\mathstrut g$} (2,1) node[above]{$W'$};
	&{}\coloneqq{}&
	\tikz[diag]\draw
		(0,-1) node[below]{$V$}
			--node[near end, fill=white, draw, inner sep=1pt]{$f$}
			(0,1) node[above]{$W$}
		(2,-1)
			node[below]{$V'$}
			--node[near start, fill=white, draw, inner sep=1pt]{$\mathstrut g$}
			 (2,1) node[above]{$W'$};
	&{}={}&
	(-1)^{\pty{f}\pty{g}}
	\tikz[diag]\draw
		(0,-1) node[below]{$V$}
			--node[near start, fill=white, draw, inner sep=1pt]{$f$}
			 (0,1) node[above]{$W$}
		(2,-1) node[below]{$V'.$}
			--node[near end, fill=white, draw, inner sep=1pt]{$\mathstrut g$}
			 (2,1) node[above]{$W'$};
	\\
	\end{array}
\end{equation*}
\endgroup
Such diagrams are to be understood as \emph{first vertical}
and \emph{then horizontal composition} of morphisms \autocite[cf.][]{Bru:Supercategories}.

\section{Super-KLR algebras}
\label{sec:super-klr}
We now recall the notion of KLR-superalgebras
following \autocite{KKT:Quiver-Hecke-Superalgebras}.
This is a generalisation of the ordinary KLR-algebra
from \autocites{KL:quantum-group-I}{Rou:2-Kac-Moody}.

\subsection{NilHecke Superalgebra}
Recall the Clifford algebra $\Cl_n$ from \cref{def:super-algebra:clifford-algebra}.
The following definitions are motivated by the definition of the quiver Hecke Clifford superalgebra
from \autocite[§3.3]{KKT:Quiver-Hecke-Superalgebras}; see \cref{quiver Hecke superalgebra:graphically}.
\begin{definition}
	\label{def:super-klr-algebra:nil-hecke-clifford-algebra}
	For any field $k$, the \term{NilHecke Clifford superalgebra} $\NHC_n$  of type $\mathrm{A}_n$ is the $k$-superalgebra
	with even generators $y_1,\dotsc,y_n$ and $\delC_1,\dotsc,\delC_{n-1}$, and odd generators $\cl_1,\dotsc,\cl_n$,
	subject to the relations
	\settowidth\algnRef{$\delC_i \delC_{i+1} \delC_i$}
	\begin{equation}
		\label{eq:Definition-NilHecke-Clifford-algebra}
		\begin{aligned}
			\mathmakebox[\algnRef][r]{y_iy_j} &=
				y_jy_i
				&&\forall i,j,\\
			\mathmakebox[\algnRef][r]{\cl_i\cl_j} &=
				-\cl_j\cl_i
				&&\forall i\neq j,\\
			\mathmakebox[\algnRef][r]{\cl_i^2} &=
				1&&\forall i,\\
			\mathmakebox[\algnRef][r]{y_i \cl_j} &=
				(-1)^{\delta_{i,j}} \cl_j y_i
				&&\forall i,j,
		\end{aligned}
		\hfil
		\settowidth\algnRef{$-1-c_ic_{i+1}$}
		\begin{aligned}
			\delC_i y_j - y_{s_i(j)}\delC_i &=
			\begin{cases}
				-1-\cl_i \cl_{i+1}					& \text{if $j=i$}\\
				\phantom{-}1-\cl_i \cl_{i+1}		& \text{if $j=i+1$}\\
				0									& \text{otherwise}
			\end{cases}\\
			\delC_i\cl_j &= \casesGap\mathmakebox[\algnRef][l]{\cl_{s_i(j)}\delC_i}
			\quad\forall i,j,
		\end{aligned}
		\hfil
	\end{equation}
	and the NilCoxeter-relations
	\settowidth{\algnRef}{$(-1)^{\delta_{i,j}} \cl_j y_i$}
	\begin{equation}
		\label{eq:NilCoxeter--Clifford-relations}
		\begin{aligned}
			\delC_i^2 &= 0\\
			\delC_i \delC_j &= \mathmakebox[\algnRef][l]{\delC_j \delC_i} &&\text{for $|i-j|>1$}\\
			\delC_i \delC_{i+1} \delC_i &= \delC_{i+1} \delC_i \delC_{i+1}.
		\end{aligned}
	\end{equation}
	The \term{NilCoxeter Clifford superalgebra} $\NCC_n\subseteq \NHC_n$ is the subalgebra
	 generated by the symbols $\delC_i$ and $\cl_j$.
\end{definition}
\begin{definition}
	\label{def:noncommutative-algebra}
	Let $A$ be a superalgebra over a field $k$.
	An \term{$A$-superalgebra} $B$ is an $A$-$A$-bimodule equipped with an associative multiplication map
	$B⊗_A B\to B$
	that is an even $A$-$A$-bimodule homomorphism.
\end{definition}

We can consider $\NHC_n$ as a $\Cl_n$-superalgebra
generated by the even symbols $y_i$ and $\delC_i$.
We introduce the following analogues to the polynomial representation
of the classical NilHecke algebra:

\begin{definition}
	\label{def:Polynomial-Clifford-Algebra}
	The \term{polynomial Clifford superalgebra} $\PolC_n$ is the $\Cl_n$-superalgebra
	with generators and relations
	\begin{equation*}
		\PolC_n ≔ \bigl\langle y_1,\dotsc, y_n \bigm|
		y_i y_j = y_j y_i,
		y_i \cl_j = (-1)^{\delta_{ij}} \cl_j y_i ∀i,j\bigr\rangle_{\Cl_n}.
	\end{equation*}
\end{definition}
\begin{lemma}
	\label{lem:PolCn-is-free}
	$\PolC_n$ is a free as left and right $\Cl_n$-module
	with basis $\{y_1^{\alpha_1}\dotsm y_n^{\alpha_n} \mid \alpha_i ∈ \mathbf{N}\}$.
	In particular, $\PolC_n$ has the same graded rank
	as left and as right $\Cl_n$-module.
\end{lemma}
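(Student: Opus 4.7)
The plan is to exhibit the canonical left $\Cl_n$-module map $\phi \colon \Cl_n \otimes_k k[y_1,\dotsc,y_n] \to \PolC_n$, $c \otimes y^\alpha \mapsto c\, y_1^{\alpha_1}\dotsm y_n^{\alpha_n}$, as an isomorphism, and then establish the right-sided analogue by a mirror-image construction. Surjectivity of $\phi$ is essentially formal: the relation $\cl_j y_i = (-1)^{\delta_{ij}} y_i \cl_j$ lets one push every Clifford letter in a word to the left, and commutativity of the $y_i$'s orders the remaining tail, so every element of $\PolC_n$ has the form $c\, y^\alpha$.

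The content is injectivity, which I would derive from a concrete faithful action. For each $i$, let $\alpha_i \colon \Cl_n \to \Cl_n$ be the even algebra automorphism determined by $\cl_i \mapsto -\cl_i$ and $\cl_j \mapsto \cl_j$ for $j \neq i$; it is well-defined since the Clifford relations $\cl_i^2 = 1$ and $\cl_i\cl_j = -\cl_j\cl_i$ ($i \neq j$) are invariant under this sign flip. On $M \coloneqq \Cl_n \otimes_k k[y_1,\dotsc,y_n]$, let $\Cl_n$ act by left multiplication in the first factor and define
\[
  y_i \cdot (c \otimes p) \coloneqq \alpha_i(c) \otimes y_i\, p.
\]
To see this extends to a left $\PolC_n$-action, I need to check the relations of \cref{def:Polynomial-Clifford-Algebra}: the $y_i$'s commute because the $\alpha_i$'s commute with each other and $k[y_1,\dotsc,y_n]$ is commutative, and the twist $y_i \cl_j = (-1)^{\delta_{ij}} \cl_j y_i$ follows from the identity $\alpha_i(\cl_j c) = (-1)^{\delta_{ij}} \cl_j \alpha_i(c)$, which is visible on generators. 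Once the action is in place, if $\sum_\alpha c_\alpha y^\alpha = 0$ in $\PolC_n$, applying it to $1 \otimes 1 \in M$ gives $\sum_\alpha c_\alpha \otimes y^\alpha = 0$, whence each $c_\alpha = 0$ since $\{1 \otimes y^\alpha\}_{\alpha \in \mathbf{N}^n}$ is an obvious $\Cl_n$-basis of $M$. This proves freeness on the left.

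The right-module statement follows by the symmetric construction on $k[y_1,\dotsc,y_n] \otimes_k \Cl_n$, with $\Cl_n$ acting by right multiplication in the second factor and $y_i$ acting via $(p \otimes c) \cdot y_i \coloneqq y_i p \otimes \alpha_i(c)$; the verification is formally identical. The graded-rank assertion then drops out for free, since the distinguished spanning set $\{y^\alpha\}$ and the degree of each of its elements are the same whether $\PolC_n$ is viewed as a left or as a right $\Cl_n$-module. The main place where care is needed is the sign bookkeeping in the twist relation on $M$ — but once the automorphisms $\alpha_i$ are named the verification is mechanical; no diamond-style confluence argument is needed because we are working with a skew-polynomial extension rather than with the full $\NHC_n$.
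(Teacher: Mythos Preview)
Your proof is correct. Both you and the paper handle spanning identically: push Clifford generators to one side via the relation $y_i \cl_j = (-1)^{\delta_{ij}} \cl_j y_i$, then sort the commuting $y_i$'s. The difference lies in linear independence. The paper stops after the rewriting argument and simply asserts the isomorphism $\PolC_n \cong \Cl_n \otimes_k \Pol_n$, leaving the reader to supply independence (implicitly via a diamond-lemma or Ore-extension argument). You instead make this step explicit by constructing a faithful $\PolC_n$-action on $\Cl_n \otimes_k k[y_1,\dotsc,y_n]$ through the sign automorphisms $\alpha_i$, and then evaluating at $1\otimes 1$. Your route is therefore more self-contained and makes the skew-polynomial structure transparent, at the modest cost of a few extra lines; the paper's version is terser but relies on the reader recognising the presentation as a standard twisted tensor product.
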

\begin{proof}
	Take any monomial from $\PolC_n$.
	According to the defining relation $y_i\cl_j=(-1)^{\delta_{ij}}\cl_jy_i$ of $\PolC_n$,
	we may move all symbols $\cl_i$ to the left (or right),
	possibly at the cost of introducing a sign when sliding them past a symbol $y_i$ with the same index.
	Since the $y_i$'s commute,
	we may sort powers of $y_i$'s by their index.
	As a left (or right) $\Cl_n$-module,
	$\PolC_n$ is thus isomorphic to $\Cl_n⊗_k \Pol_n$ (or $\Pol_n⊗_k\Cl_n$).
\end{proof}
\begin{definition}
	\label{def:clifford-demazure}
	As a Clifford-analogue for the Demazure operator $\partial_i$ on $\Pol_n$,
	we define an even endomorphism $\delC_i$ of $\PolC_n$ by
		\begin{align*}
			\delC_i(y_i) 		&≔ -1-\cl_i\cl_{i+1}\ \forall i, 				\qquad	&	\delC_i(y_j) &≔ 0\ ∀j\neq i,i+1,\\
			\delC_i(y_{i+1}) 	&≔ \phantom{-}1-\cl_i\cl_{i+1}\ \forall i,				&	\delC_i(\cl_j) &≔ 0\ ∀j,\\
		\intertext{such that $\delC_i$ is a $s_i$-derivation, \ie,}
			\delC_i(fg)			&≔ \mathrlap{\delC_i(f)g+s_i(f)\delC_i(g) \ ∀j\neq i,i+1}.
		\end{align*}
	Here, the symmetric group $S_n$ acts on $\PolC_n$
	by permuting the symbols $y_i$ and $\cl_j$ independently.
\end{definition}
\begin{lemma}
	\label{lemma:clifford-demazure well-defined}
	The Clifford Demazure operator $\delC_i$ is a well-defined morphism of right $\Cl_n$-modules.
\end{lemma}
\begin{proof}
	To show well-definedness inductively, take an $f∈\PolC_n$.
	Without loss of generality, we assume that $i=1$.
	We show that $\delC_1$ is compatible with the relations of $\PolC_n$.
	\begin{itemize}
		\item Relations involving only symbols $y_i$: since
		\begin{align*}
			\delC_1 (y_1 y_2) &= (-1-\cl_1\cl_2)y_2 + y_2 (1-\cl_1\cl_2) = 0,\\
			\delC_1 (y_2 y_1) &= (1-\cl_1\cl_2)y_1 + y_1 (-1-\cl_1\cl_2) = 0,
		\end{align*}
		we get for all $f$ that \[\delC_1(y_1 y_2 f) = y_1y_2 \delC_1 (f) = y_2y_1 \delC_1 (f) = \delC_1(y_2 y_1 f).\]
		\item Relations involving symbols $y_i$ and $\cl_i$:
		since
		$\delC_1(\cl_1 y_1) = \cl_2(-1-\cl_1 \cl_2) = \cl_1-\cl_2 = -\delC_1(y_1 \cl_1),$
		we get
		\begin{equation*}
			\delC_1(\cl_1 y_1 f) = \delC_1(\cl_1 y_1) f + \cl_2 y_2 \delC_1 (f) = -\delC_1(y_1 \cl_1) f -  y_2 \cl_2 \delC_1 (f) = -\delC_1( y_1 \cl_1 f).
		\end{equation*}
		The other relations $\delC_1(\cl_2 y_1 f) = - \delC_1( y_1 \cl_2 f)$ and $\delC_1(y_2\cl_1 f) = \delC_1(\cl_2y_2 f)$ and $\delC_1(\cl_2y_2 f)=-\delC_1(y_2\cl_2 f)$ are shown similarly.
	\end{itemize}
	It follows by induction on the length of $f$ (in terms of the generators) that $\delC_i$ is well-defined.
	The operator $\delC_i$ is a right $\Cl_n$-module homomorphism
	because $\delC_i(f\cl_j) = \delC_i(f)\cl_j + s_i(f)\delC_i(\cl_j) = \delC_i(f)\cl_j$
	for all $f∈\PolC_n$ and $1\leq j\leq n$.
\end{proof}
\begin{remark}
	\label{lem:Kernel-of-Clifford-Demazure-is-Algebra}
	The kernel $\ker\delC_i$ is a $\Cl_n$-super subalgebra of $\PolC_n$
	because $\delC_i(fg) = \delC(f)g + s_i(f)\delC_i(g) = 0$ for $f,g\in\ker\delC_i$.
	Trivially, we have $\Cl_n \subseteq \ker\delC_i$.
\end{remark}
\begin{lemma}
	\label{lem:super-klr-algebra:clifford-demazure-relations}
	\begin{thmlist}
		\item The Clifford Demazure operators $\delC_i$ and multiplications by $y_i$ and $\cl_i$
			render $\PolC_n$ a representation of the NilHecke Clifford superalgebra.
		\item $\ker\delC_i=\im\delC_i$.
	\end{thmlist}
\end{lemma}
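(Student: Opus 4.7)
My plan is to handle part (i) by verifying each defining relation of $\NHC_n$ on the operators acting on $\PolC_n$, and to handle part (ii) by exhibiting an explicit $\delC_i$-preimage for each kernel element. The purely polynomial and purely Clifford relations of $\NHC_n$, together with $y_i\cl_j=(-1)^{\delta_{ij}}\cl_jy_i$, already hold in the superalgebra $\PolC_n$ and so hold a fortiori for the corresponding multiplication operators. The relations between $\delC_i$ and left multiplication by $y_j$ or $\cl_j$ follow immediately from the $s_i$-derivation rule of \cref{def:Polynomial-Clifford-Algebra}: for any $f\in\PolC_n$,
\[
	\delC_i(y_jf)=\delC_i(y_j)f+y_{s_i(j)}\delC_i(f),\qquad
	\delC_i(\cl_jf)=\cl_{s_i(j)}\delC_i(f),
\]
which transcribe exactly to the $\delC_iy_j-y_{s_i(j)}\delC_i$ and $\delC_i\cl_j-\cl_{s_i(j)}\delC_i$ identities of \eqref{eq:Definition-NilHecke-Clifford-algebra}.

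The NilCoxeter-type relations in \eqref{eq:NilCoxeter--Clifford-relations} are the nontrivial part. I would first establish the companion identities $s_i\delC_i+\delC_is_i=0$ and $s_i\delC_j=\delC_js_i$ for $|i-j|>1$ on the generators $y_j,\cl_j$ of $\PolC_n$ (an immediate check), and then extend them to all of $\PolC_n$ by a Leibniz-style induction using the fact that $s_i$ is an algebra automorphism while $\delC_i$ is an $s_i$-derivation. With those in hand, the expansion
\[
	\delC_i^2(fg)=\delC_i^2(f)g+\bigl(s_i\delC_i(f)+\delC_is_i(f)\bigr)\delC_i(g)+f\delC_i^2(g)
\]
reduces $\delC_i^2=0$ to the (immediate) check on generators, and an analogous induction yields $\delC_i\delC_j=\delC_j\delC_i$ for $|i-j|>1$. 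The main obstacle is the braid relation $\delC_i\delC_{i+1}\delC_i=\delC_{i+1}\delC_i\delC_{i+1}$, which I would verify by direct computation on the $\Cl_n$-basis $\{y_i^ay_{i+1}^by_{i+2}^c\}$ of the relevant sub-$\Cl_n$-algebra, carefully tracking the signs that arise when $\delC_*$ commutes past $\cl_*$; after reassembling Clifford coefficients, this should amount to the classical NilHecke braid identity together with a twisted variant.

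Part (ii) is then short. The inclusion $\im\delC_i\subseteq\ker\delC_i$ is $\delC_i^2=0$. For the converse, set $z≔y_{i+1}-y_i$ and observe
\[
	\delC_i(z)=(1-\cl_i\cl_{i+1})-(-1-\cl_i\cl_{i+1})=2,\qquad s_i(z)=-z,
\]
so that the $s_i$-derivation rule gives $\delC_i(zf)=2f-z\delC_i(f)$ for every $f\in\PolC_n$. For $f\in\ker\delC_i$ this collapses to $\delC_i(zf)=2f$, and hence $f=\delC_i\bigl(\tfrac12zf\bigr)\in\im\delC_i$, provided $2$ is invertible in $k$.
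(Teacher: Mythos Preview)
Your approach to part (i) is essentially the paper's: verify the mixed relations of \eqref{eq:Definition-NilHecke-Clifford-algebra} directly from the $s_i$-derivation rule, deduce $\delC_i^2=0$ from the auxiliary identity $s_i\delC_i=-\delC_is_i$, and handle the remaining NilCoxeter relations by induction on products. The only soft spot is the braid relation: a ``direct computation on the basis $\{y_i^ay_{i+1}^by_{i+2}^c\}$'' is not literally feasible, since that basis is infinite. The paper executes exactly the inductive scheme you gesture at elsewhere: assuming $\delC_i\delC_{i+1}\delC_i(f)=\delC_{i+1}\delC_i\delC_{i+1}(f)$, it checks the same equality for $y_jf$ with $j\in\{i,i+1,i+2\}$ by expanding via the $s_i$-derivation rule (three explicit computations, each half a dozen lines). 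That is the missing ingredient in your sketch; your ``classical NilHecke braid identity together with a twisted variant'' summary is not wrong in spirit, but the actual verification is a pure calculation with no shortcut.

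Your argument for part (ii) is correct and genuinely simpler than the paper's. The paper treats $(\PolC_n,\delC_i)$ as a chain complex and writes down an explicit, degree-dependent null-homotopy $h_k(f)=\tfrac12(\mp 1-\cl_i\cl_{i+1})y_{i\,\text{or}\,i+1}\,f$ according to the parity of $k$, then checks $h\delC_i+\delC_ih=\id$. Your observation that $z=y_{i+1}-y_i$ satisfies $\delC_i(z)=2$ and $s_i(z)=-z$ collapses this to a single line: for $f\in\ker\delC_i$ one has $f=\delC_i(\tfrac12zf)$. Both arguments need $2\in k^\times$; yours avoids the case split and the Clifford coefficients entirely.
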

\begin{proof}
	\begin{thmlist}
	\item
	We have to make sure that the Clifford Demazure operators
	satisfy the relations from \cref{def:super-klr-algebra:nil-hecke-clifford-algebra}.
	From the definition of $\delC_i$ one can see
	that the relations \cref{eq:Definition-NilHecke-Clifford-algebra} are satisfied.
	We show that the operators $\delC_i$ satisfy \cref{eq:NilCoxeter--Clifford-relations}:
	\begin{itemize}
		\item To show $\delC_i^2 (f)=0$:
		Assume that the statement holds for $f,g∈\PolC_n$.
		It follows from
		\settowidth{\algnRef}{$\delC_1 \delC_2 \delC_1 (y_3 f)$}
		\begin{align*}
			\mathmakebox[\algnRef][r]{\delC_1^2(fg)} &= \delC_i \big( \delC_i(f)g + s_i(f) \delC_i (g) \big)\\
			&= s_i(\delC_i(f))\, \delC_i(g) + \delC_i(s_i f)\, \delC_i (g)
		\end{align*}
		that it suffices to show that $s_i\delC_i=-\delC_i s_i$.
		To show this inductively, assume that $s_i\delC_i (f)=-\delC_i(s_i f)$.
		We need to show that $s_i\delC_i (y_j f)=-\delC_i \bigl(s_i(y_j f)\bigr)$,
		which is obvious for $j\neq i$.
		Applying the induction hypothesis in $*$ below, one calculates
		\begin{align*}
			\mathmakebox[\algnRef][r]{s_i\delC_i(y_i f)} &= s_i\bigl[(-1-\cl_i\cl_{i+1}) f
				+ y_{i+1}\,\delC_i f\bigr]\\
			&\asteq (-1 +\cl_i\cl_{i+1}) \, s_i f
				- y_{i+1}\, \delC_i s_i f\\
			&= -\delC_i(y_{i+1}\, s_i f),
		\end{align*}
		The calculation for $j=i+1$ is similar.
		\item To show $\delC_i\delC_j (f)=\delC_j \delC_i (f)$ for $|i-j|>1$:
		Clear from the definition of $\delC_i$.
		\item To show $\delC_i\delC_{i+1}\delC_i(f)= \delC_{i+1}\delC_{i}\delC_{i+1}(f)$:
		Assume the statement holds for $f∈\PolC_n$.
		It is clear that the statement then also holds true for $\cl_j f$ and $y_jf$
		for all $j\neq i,i+1,i+2$.
		For these cases, we assume \wlofg\ that $i=1$.
		Applying the induction hypothesis in $*$, one computes
		\begin{align*}
			\delC_1 \delC_2 \delC_1 (y_1 f)
			&= \delC_1 \delC_2 (-1-\cl_1\cl_2)f + y_2\, \delC_2f)\\*
			&= \delC_1 \big((-1-\cl_1\cl_3)\delC_2 f + (-1-\cl_2\cl_3)\delC_1 f + y_3\, \delC_2 \delC_1 f\big)\\
			&= (-1-\cl_2\cl_3)\delC_1 \delC_2 f + y_3 \,\delC_1 \delC_2 \delC_1 f\\
			&\asteq
			   \delC_2 \big((-1-\cl_1\cl_2)\delC_2 f + y_2\, \delC_1 \delC_2 f\big)\\
			&= \delC_2 \delC_1 (y_1\, \delC_2 f)\\*
			&= \delC_2 \delC_1 \delC_2 (y_1 f);\\
			\delC_1 \delC_2 \delC_1 (y_2 f)
			&= \delC_1 \delC_2 \big((-1-\cl_1\cl_2)f + y_1\,\delC_1 f\big)\\*
			&= \delC_1 \big((1-\cl_1\cl_3) \delC_2 f + y_1 \,\delC_2 \delC_1 f\big)\\
			&= (-1-\cl_2\cl_3)\delC_1 \delC_2 f + (-1-\cl_1\cl_2)\delC_2 \delC_1 f + y_2\, \delC_1 \delC_2 \delC_1 f\\
			&\asteq
			   \delC_2 \big((-1-\cl_1\cl_3)\delC_1 + y_3\,\delC_1 \delC_2 f\big)\\
			&= \delC_2 \delC_1 \big((-1-\cl_2\cl_3)f + y_3\, \delC_2 f)\\*
			&= \delC_2 \delC_1 \delC_2 (y_2 f);\\
			\delC_1 \delC_2 \delC_1(y_3 f)
			&= \delC_1 \delC_2 (y_3 \,\delC_1 f)\\*
			&= \delC_1 \big( (1-\cl_2\cl_3) \delC_1 f + y_2 \,\delC_2 \delC_1 f)\\
			&= (1-\cl_1\cl_2) \delC_2 \delC_1 f + y_1\, \delC_1 \delC_2 \delC_1 f\\
			&\stackrel{*}{=}
			   \delC_2 \big((1-\cl_1\cl_3)f + (1-\cl_1\cl_2)\delC_2 f + y_1\, \delC_1 \delC_2 f\big)\\
			&= \delC_2 \delC_1 \big((1-\cl_2\cl_3)f + y_2 \,\delC_2 f\big)\\*
			&=\delC_2 \delC_1 \delC_2 (y_3\, f).
		\end{align*}
	\end{itemize}
	Therefore, $\PolC_n$ is a representation of $\NHC_n$.
	We show in \cref{lem:Clifford-NilHecke-acts-Faithfully} that it is faithful.

	\item
	Because $\delC_i^2=0$, we may regard $(\PolC_n, \delC_i)$ as a chain complex of right $\Cl_n$-modules,
	using its polynomial grading in the $y_i$'s.
	We claim that this chain complex $\PolC_n$ is contractible by the homotopy
		\[
			h_k\colon  (\PolC_n)_k \to (\PolC_n)_{k+1},
			f\mapsto
			\begin{cases}
				\frac{1}{2}(-1-\cl_i\cl_{i+1})y_i f & \text{if $k$ is even,}\\
				\frac{1}{2}(+1-\cl_i\cl_{i+1})y_{i+1}f & \text{if $k$ is odd.}
			\end{cases}
		\]
	Assume $f$ is even. We obtain
	\begin{align*}
		&\eqsp h_{k-1}(\delC_i f) + \delC_i h_k (f)\\
		&= \tfrac 12(1-\cl_i\cl_{i+1})y_{i+1} (\delC_i f)
			+\tfrac{1}{2}\delC_i\bigl( (-1-\cl_i\cl_{i+1}) y_if\bigr)\\
		&= \tfrac 12(1-\cl_i\cl_{i+1})y_{i+1} (\delC_i f)
			+\tfrac{1}{2}(-1+\cl_i\cl_{i+1}) \bigl[(-1-\cl_i\cl_{i+1}) f + y_{i+1}\delC_i(f)\bigr]\\
		&= \tfrac{1}{2}\bigl[ (1-\cl_i\cl_{i+1})+(-1+\cl_i\cl_{i+1}) \bigr] y_{i+1} \delC_i (f)
			+ \tfrac{1}{2}(-1+\cl_1\cl_{i+1})(-1-\cl_i\cl_{i+1}) f\\
		&=-f.
 	\end{align*}
 	The computation is similar if $f$ is odd.
	Therefore, $-h$ indeed is a homotopy from the identity to the zero morphism,
 	and the chain complex $(\PolC_n, \delC_i)$ is acyclic.
 	This proves the statement.\qedhere
	\end{thmlist}
\end{proof}

\subsection{Quiver Hecke (Clifford) superalgebra}
\label{sec:quiver-hecke-superalgebra}
Let $I=I_0\sqcup I_1$ be a finite, $\sZ$-graded set as in the introduction.

\begin{definition}[{\autocite[§3]{KKT:Quiver-Hecke-Superalgebras}}]
	A \term{generalised Cartan matrix} on $I$ is a matrix $C=(d_{ij})∈𝐙^{I×I}$ such that
	\begin{thmlist}
		\item $d_{ii}=2$ for all $i∈I$,
		\item $d_{ij}≤0$ for distinct $i,j∈I$,
		\item $d_{ij}=0$ if and only if $d_{ji}=0$, and
		\item $d_{ij}$ is even if $i∈I_\text{1}$.
	\end{thmlist}
	We define a $𝐙$-valued bilinear form on $𝐍^I$ by $⟨i, j⟩≔-d_{ij}$.
	Equivalently, we may describe the same datum by a graph $\Gamma_C$
	with vertex set $I$ and $d_{ij}$ directed edges from $i$ to $j$ for $i\neq j$.
	For distinct indices $i,j$, we write $i\conn j$ if $d_{ij}\neq 0$
	and $i\disc j$ if $d_{ij}=0$.
\end{definition}
For the remainder of this section, let $C∈\mathbf{Z}^{I\times I}$ be a symmetrisable generalised Cartan matrix
and fix an orientation for each edge $i\conn j$ of $\Gamma_C$.
We write $i\to j$ if the edge is oriented from $i$ to $j$.

The following diagrammatic calculus for the quiver Hecke super algebra from \autocite[§3]{KKT:Quiver-Hecke-Superalgebras}
mimics the constructions from \autocites{KL:quantum-group-I}{KL:quantum-group-II}.

\begin{definition}
	A \term{string diagram} with $n$ strings consists of $n$ continuous paths
	$𝜙_k:[0,1]↪𝐑×[0,1]$ for $1≤k≤n$, called strings,
	such that
	\begin{thmlist}
		\item each string starts in $𝐍×\{0\}$ and ends in $𝐍×\{1\}$,
		\item the projection of any string to the second coordinate is strictly monotonically increasing,
		\item each endpoint has precisely one string attached to it, and
		\item at most two strings may intersect in a single point.
	\end{thmlist}
	A string diagram is defined up to isotopy.
	Strings in a diagram may be endowed with certain point-like decorations,
	whose positions are also specified up to isotopy.
	Crossings and decorations are endowed with parities.
	Unless decorations commute, we make sure they appear at distinct heights in one diagram.
	Strings are numbered from the left at their bottom endpoints.
\end{definition}

\begin{definition}[($\sH_n(C)$, diagrammatically)]
	\label{quiver Hecke superalgebra:graphically}
	The \term{quiver Hecke superalgebra} $\sH_n(C)$ is the $k$-linear supercategory
	consisting of the following data.
	\begin{itemize}
		\item Its objects are sequences $\nu∈I^n$.
		\item If $\nu'$ is not a permutation of $\nu$, then $\Hom(\nu, \nu') \coloneqq 0$.
			Otherwise, $\Hom(\nu, \nu')$ is generated by string diagrams on $n$ strings
			which connect identical entries of $\nu$ and $\nu'$.
			We say that the $k$-th string is labelled by $\nu_k$
			and usually write its label below the string.
			Strings may be decorated by an arbitrary non-negative number of dots,
			distant from the crossings.
		\item
			The dots
			$x_{k,\nu} ≔
				\tikz[diag, inline] \draw (0,-1) node[right]{$\nu_k$} -- node[rot, deco]{$\bullet$} (0,1);$
			and crossings
			$\tau_{k,\nu} ≔
				\tikz[diag, inline] \draw
						(-1,-1) node[left]{$\nu_k$} -- (1,1)
						(1,-1) node[right]{$\nu_{k+1}$} --(-1,1);
			$
			have parities
			$|\tikz[diag, inline] \draw (0,-1) node[right]{$\nu_k$} -- node[rot, deco]{$\bullet$} (0,1);| ≔ |\nu_k|$
			and
			$|
				\tikz[diag, inline] \draw
						(-1,-1) node[left]{$\nu_k$} -- (1,1)
						(1,-1) node[right]{$\nu_{k+1}$} --(-1,1);
			 |≔|\nu_k||\nu_{k+1}|$.
		\item Composition is given by vertically stacking diagrams
		and subject to the following local relations:
	\settowidth{\algnRef}{$%
		\displaystyle
		\tikz[diag] \draw (-1,-1) node[below]{$i$} to node[label=10:$d_{ij}$]{$\bullet$} (-1,1) (1,-1)node[below]{$i$} -- (1,1);
		+\tikz[diag] \draw (-1,-1) node[below]{$i$} to (-1,1) (1,-1)node[below]{$i$} to node[label=10:$d_{ji}$]{$\bullet$} (1,1);
	$}
	\begin{align}
		\tikz[diag] \draw
			(-1,-1) node[below]{$i$} -- node[deco] {$\bullet$} (-1,1)
			(1,-1) node[below]{$j$} -- node[deco] {$\bullet$} (1,1);
		=
		\tikz[diag] \draw
			(-1,-1) node[below]{$i$} -- node[deco, near end] {$\bullet$} (-1,1)
			(1,-1) node[below]{$j$} -- node[deco, near start] {$\bullet$} (1,1);
		&=
		\casesGap
		\mathmakebox[\algnRef]{
			\mathllap{(-1)^{\pty{i}\pty{j}}}
			\tikz[diag] \draw
				(-1,-1) node[below]{$i$} -- node[deco, near start] {$\bullet$} (-1,1)
				(1,-1) node[below]{$j$} -- node[deco, near end] {$\bullet$} (1,1);
		}
		\quad
		\forall i,j\\
		%
		\label{eq:bullets crossings 1}
		  \tikz[diag] \draw (-1,-1) node[below]{$i$} -- (1,1) (1,-1) node[below]{$j$} to[near start] node[deco]{$\bullet$} (-1,1);
		- (-1)^{|i||j|}
		  \tikz[diag] \draw (-1,-1) node[below]{$i$} -- (1,1) (1,-1) node[below]{$j$} to[near end] node[deco]{$\bullet$} (-1,1);
		&=\begin{cases}
			\mathmakebox[\algnRef]{0}
			& \text{if $i\neq j$}\\
			\mathmakebox[\algnRef]{%
				\tikz[diag] \draw (-1,-1) node[below]{$i$} -- (-1,1) (1,-1)node[below]{$i$} -- (1,1);
			}
			& \text{if $i=j$}
		\end{cases}
		\\
		\label{eq:bullets crossings 2}
		  \tikz[diag] \draw (-1,-1) node[below]{$i$} to[near start] node[deco]{$\bullet$} (1,1) (1,-1) node[below]{$j$} -- (-1,1);
		- (-1)^{|i||j|}
		  \tikz[diag] \draw (-1,-1) node[below]{$i$} to[near end] node[deco]{$\bullet$} (1,1) (1,-1) node[below]{$j$} -- (-1,1);
		&=\begin{cases}
			\mathmakebox[\algnRef]{0}
			& \text{if $i\neq j$}\\
			\mathmakebox[\algnRef]{%
				\mathllap{-{}}\tikz[diag] \draw (-1,-1) node[below]{$i$} -- (-1,1) (1,-1)node[below]{$i$} -- (1,1);
			}
			& \text{if $i=j$}
		\end{cases}
		\\
		%
		%
		\label{eq:double crossing}
		\tikz[diag, curved] \draw (-1,-2) node[below]{$i$} to[out=+45, in=-45] (-1,2)
			(1,-2) node[below]{$j$} to[out=135,in=-135] (1,2);
		&= \begin{cases}
			\mathmakebox[\algnRef]{%
				0
			}
			& \text{if $i=j$}\\
			\mathmakebox[\algnRef]{%
				\tikz[diag] \draw (-1,-1) node[below]{$i$} -- (-1,1) (1,-1)node[below]{$j$} -- (1,1);
			}
			& \text{if $i\disc j$};\\
			\mathmakebox[\algnRef]{%
				\tikz[diag] \draw (-1,-1) node[below]{$i$} to node[deco, label=10:$d_{ij}$]{$\bullet$} (-1,1) (1,-1)node[below]{$j$} -- (1,1);
				+\tikz[diag] \draw (-1,-1) node[below]{$i$} to (-1,1) (1,-1)node[below]{$j$} to node[deco, label=10:$d_{ji}$]{$\bullet$} (1,1);
			}
			& \text{if $i\conn j$}
		\end{cases}
		\\
		%
		%
		\label{eq:braid relation}
		\tikz[diag, scale=.6] \draw (-2,-2) node[below]{$i$} -- (2,2)
			(2,-2) node[below]{$k$} -- (-2,2)
		(0,-2) node[below]{$j$} to[out=135,in=-135, curved] (0,2);
		-(-1)^{|i||j|}
		\tikz[diag, scale=.6] \draw (-2,-2) node[below]{$i$} -- (2,2)
			(2,-2) node[below]{$k$} -- (-2,2)
			(0,-2) node[below]{$j$} to[out=45,in=-45, curved] (0,2);
		&=
		\begin{cases}
			\mathmakebox[\algnRef]{%
				\tikz[diag] \draw (-2,-1) node[below]{$i$} --(-2,1) (0,-1) node[below]{$j$} --(0,1) (2,-1) node[below]{$i=k$} --(2,1);
			}
			& \text{if $i=k$ and $i\conn j$}\\
			\mathmakebox[\algnRef]{0}
			& \text{otherwise.}
		\end{cases}
	\end{align}
	The exponents in \eqref{eq:double crossing} are meant as powers; \ie, as repetition of the symbol.
	\end{itemize}
\end{definition}
\begin{remark}
	There is an embedding of $\sH_n(C)$
	into Brundan's diagrammatic super Kac-Moody 2-category
	\autocite{Bru:Super-Kac-Moody}
	by adding an arrow tip pointing upwards to every string.
\end{remark}

For calculations it is advantageous
to \emph{adjoin} odd generators for all odd indices $i∈I_1$
instead of endowing the $x_i$'s with a super grading themselves.
This is captured by the superalgebra defined in the following diagramatically
along the lines of \autocite{KKT:Quiver-Hecke-Superalgebras}.
\begin{definition}[($\HC_n(C)$, diagrammatically)]
	\label{def:super-klr-algebra:HC(C):diagramatically}
	With the same data as above, the \term{quiver Hecke Clifford superalgebra} $\HC_n(C)$
	is the supercategory with the following data:
	\begin{itemize}
	\item It has the same objects $\nu∈I^n$ as $\sH_n(C)$.
	\item It has the generating morphisms
	$y_{k,\nu}=\tikz[diag, inline] \draw (0,-1) node[right]{$\nu_k$} -- node[rot, deco]{$\blacklozenge$} (0,1);$
	and $\sigma_{k,\nu}=%
		\tikz[diag, inline] \draw
			(-1,-1) node[left]{$\nu_k$} -- (1,1)
			(1,-1) node[right]{$\nu_{k+1}$} -- node[deco]{$\whitebullet$} (-1,1);%
	$ of even parity,
	and the generating morphism $\cl_{k,\nu}=\tikz[diag, inline] \draw (0,-1) node[right]{$\nu_k$} -- node[rot, deco]{$\whitelozenge$} (0,1);$ of odd parity.
	We refer to the symbols $y_{k,\nu}$ and $\cl_{k,\nu}$ as \emph{diamonds}, and to symbols $\sigma_{k,\nu}$ as \emph{crossings}.
	\item The morphisms are subject to the relations
	$\cl_{k,\nu}=0$ if $|\nu_k|=0$,
	$\tikz[diag, inline] \draw (0,-1.2) node[right]{$i$} --
		node[pos=0.275, rot, deco]{$ \whitelozenge$}
		node[pos=0.725, rot, deco]{$\whitelozenge$}
		(0,1.2);
	=\tikz[diag, inline] \draw (0,-1) node[right]{$i$} -- (0,1); $,
	the commutativity relations%
	$\tikz[diag, inline] \draw (0,-1.2) node[left]{$i$} --
		node[pos=0.275, rot, deco]{$\whitelozenge$}
		node[pos=0.725, rot, deco]{$\blacklozenge$}
		(0,1.2);
	=-
	\tikz[diag, inline] \draw (0,-1.2) node[right]{$i$} --
		node[pos=0.275, rot, deco]{$\blacklozenge$}
		node[pos=0.725, rot, deco]{$\whitelozenge$}
		(0,1.2);
	$
	,
	$\tikz[diag, inline]
		\draw
		(-2,-1.2) node[left]{$i$} -- node[to start, rot, deco]{$\whitelozenge$}(-2,1)
		(2,-1.2) node[right]{$j$} -- node[to end, rot, deco]{$\whitelozenge$} (2,1)
		(0,0) node{$\cdots$};
	=-
	\tikz[diag, inline]
		\draw
		(-2,-1.2) node[left]{$i$} -- node[to end, rot, deco]{$\whitelozenge$}(-2,1)
		(2,-1.2) node[right]{$j$} -- node[to start, rot, deco]{$\whitelozenge$} (2,1)
		(0,0) node{$\cdots$};$
	,
	$\tikz[diag, inline]
		\draw
		(-2,-1.2) node[left]{$i$} -- node[to start, rot, deco]{$\whitelozenge$}(-2,1)
		(2,-1.2) node[right]{$j$} -- node[to end, rot, deco]{$\blacklozenge$} (2,1)
		(0,0) node{$\cdots$};
	=
	\tikz[diag, inline]
		\draw
		(-2,-1.2) node[left]{$i$} -- node[to end, rot, deco]{$\whitelozenge$}(-2,1)
		(2,-1.2) node[right]{$j$} -- node[to start, rot, deco]{$\blacklozenge$} (2,1)
		(0,0) node{$\cdots$};$,
	and the following relations for the interaction with crossings:
	\settowidth{\algnRef}{$%
		(-1)^{d_{ij}/2}
		\tikz[diag] \draw
		(-1,-1) node[below]{$i$} --
		node[rot,label={[label distance=0pt, absolute]10:$d_{ij}$}, deco]{$\blacklozenge$}
		(-1,1)
		(1,-1) node[below]{$j$} -- (1,1);
		+
		(-1)^{d_{ji}/2}
		\tikz[diag] \draw
		(-1,-1) node[below]{$i$} -- (-1,1)
		(1,-1) node[below]{$j$} --
		node[label={[label distance=0pt, absolute]10:$d_{ji}$},deco,rot]{$\blacklozenge$}
		(1,1);
	$}
	\settowidth{\algnRefB}{$\displaystyle
	\phantom{-}
	\tikz[diag] \draw
	(-1,-1) node[below]{$i$} -- (-1,1)
	(1,-1) node[below]{$i$} -- (1,1);
	-
	\tikz[diag] \draw
	(-1,-1) node[below]{$i$} --
	node[rot, deco]{$\whitelozenge$}
	(-1,1)
	(1,-1) node[below]{$i$} --
	node[rot, deco]{$\whitelozenge$}
	(1,1);
	$
	}
	\begin{align}
		%
		\label{eq:qHC:sliding-black-diamonds}
		\tikz[diag] \draw
		  	(-1,-1) node[below]{$i$} -- (1,1)
		  		(1,-1) node[below]{$j$} --
		  			node[very near start, rot, deco]{$\blacklozenge$}
		  			node[deco]{$\whitebullet$}
		  			(-1,1);
		-
		\tikz[diag] \draw
		  	(-1,-1) node[below]{$i$} -- (1,1)
		  	(1,-1) node[below]{$j$} --
		  		node[very near end, rot, deco]{$\blacklozenge$}
		  		node[deco]{$\whitebullet$}
		  		(-1,1);
		&=\begin{cases}
			\mathmakebox[\algnRef][l]{\mathmakebox[\algnRefB]{0}}
			& \text{if $i\neq j$,}\\
			\mathmakebox[\algnRef][l]{%
				\phantom{-}
				\tikz[diag] \draw
					(-1,-1) node[below]{$i$} -- (-1,1)
						(1,-1) node[below]{$i$} -- (1,1);
				-
				\tikz[diag] \draw
					(-1,-1) node[below]{$i$} --
						node[rot, deco]{$\whitelozenge$}
						(-1,1)
					(1,-1) node[below]{$i$} --
						node[rot, deco]{$\whitelozenge$}
						(1,1);
			}
			& \text{if $i=j$,}
		\end{cases}
		\\
		\tikz[diag] \draw
		  	(-1,-1) node[below]{$i$} --
		  		node[very near start, rot, deco]{$\blacklozenge$}
		  		(1,1)
		  	(1,-1) node[below]{$j$} --
		  		node[deco]{$\whitebullet$}
		  		(-1,1);
		-
		\tikz[diag] \draw
		  	(-1,-1) node[below]{$i$} --
		  		node[very near end, rot, deco]{$\blacklozenge$}
		  		(1,1)
		  	(1,-1) node[below]{$j$} --
		  			node[deco]{$\whitebullet$}
		  			(-1,1);
		&=\begin{cases}
			\mathmakebox[\algnRef][l]{\mathmakebox[\algnRefB]{0}}
			& \text{if $i\neq j$,}\\
			\mathmakebox[\algnRef][l]{%
				-
				\tikz[diag] \draw
					(-1,-1) node[below]{$i$} -- (-1,1)
					(1,-1)node[below]{$i$} -- (1,1);
				-
				\tikz[diag] \draw
					(-1,-1) node[below]{$i$} --
						node[rot, deco]{$\whitelozenge$}
						(-1,1)
					(1,-1) node[below]{$i$} --
						node[rot, deco]{$\whitelozenge$}
						(1,1);
			}
			& \text{if $i=j$,}
		\end{cases}
		\\
		\label{eq:qHC:sliding-white-diamonds}
		\tikz[diag] \draw
		  	(-1,-1) node[below]{$i$} -- (1,1)
		  		(1,-1) node[below]{$j$} --
		  			node[very near start, rot, deco]{$\whitelozenge$}
		  			node[deco]{$\whitebullet$}
		  			(-1,1);
		-
		\tikz[diag] \draw
		  	(-1,-1) node[below]{$i$} -- (1,1)
		  		(1,-1) node[below]{$j$} --
		  			node[very near end, rot, deco]{$\whitelozenge$}
		  			node[deco]{$\whitebullet$}
		  			(-1,1);
		&= \mathmakebox[\algnRef][l]{
		\tikz[diag] \draw
		  	(-1,-1) node[below]{$i$} --
		  		node[very near start, rot, deco]{$\whitelozenge$}
		  		(1,1)
		  	(1,-1) node[below]{$j$} --
		  			node[deco]{$\whitebullet$}
		  			(-1,1);
		-
		\tikz[diag] \draw
		  	(-1,-1) node[below]{$i$} --
		  		node[very near end, rot, deco]{$\whitelozenge$}
		  		(1,1)
		  	(1,-1) node[below]{$j$} --
		  			node[deco]{$\whitebullet$}
		  			(-1,1);
			= 0
		}
		\casesGap
		\quad \text{for all $i$, $j$,}
		\\
		%
		%
		\label{eq:qHC:double-crossing}
		\begin{tikzpicture}[diag]
			\draw[curved, name path=stringA]
				(-1,-2) node[below]{$i$} 	to[out=+45, in=-45] (-1,2);
			\draw[curved, name path=stringB]
				(1,-2) node[below]{$j$} 	to[out=135,in=-135] (1,2);
			\draw[name intersections={of=stringA and stringB}]
				(intersection-1) node[deco]{$\whitebullet$}
				(intersection-2) node[deco]{$\whitebullet$};
		\end{tikzpicture}
		&= \begin{cases}
			\mathmakebox[\algnRef][l]{\mathmakebox[\algnRefB][c]{0}}%
			& \text{if $i=j$,}\\
			\mathmakebox[\algnRef][l]{\mathmakebox[\algnRefB][c]{%
				\tikz[diag] \draw (-1,-1) node[below]{$i$} -- (-1,1) (1,-1)node[below]{$j$} -- (1,1);
			}}
			& \text{if $i\disc j$,}\\
			\mathmakebox[\algnRef][l]{%
				(-1)^{d_{ij}/2}
				\tikz[diag] \draw
					(-1,-1) node[below]{$i$} --
						node[rot,label={[label distance=0pt, absolute]10:$d_{ij}$}, deco]{$\blacklozenge$}
						(-1,1)
					(1,-1) node[below]{$j$} -- (1,1);
				+
				(-1)^{d_{ji}/2}
				\tikz[diag] \draw
					(-1,-1) node[below]{$i$} -- (-1,1)
					(1,-1) node[below]{$j$} --
						node[label={[label distance=0pt, absolute]10:$d_{ji}$},deco,rot]{$\blacklozenge$}
						(1,1);
			}
			& \text{if $i\conn j$,}
		\end{cases}
		\\
		%
		%
		\begin{tikzpicture}[diag, scale=.6]
			\draw[name path=stringA]
				(-2,-2) 	node[below]{$i$} -- (2,2)
				(2,-2) 		node[below]{$k$} -- (-2,2)
				(0,0)		node[deco]{$\whitebullet$};
			\draw[name path=stringB]
				(0,-2) 		node[below]{$j$} to[out=135, in=-135, curved] (0,2);
			\draw[name intersections={of=stringA and stringB}]
				(intersection-1) node[deco]{$\whitebullet$}
				(intersection-2) node[deco]{$\whitebullet$};
		\end{tikzpicture}
		-
		\begin{tikzpicture}[diag, scale=.6]
			\draw[name path=stringA]
				(-2,-2) 	node[below]{$i$} -- (2,2)
				(2,-2) 		node[below]{$k$} -- (-2,2)
				(0,0)		node[deco]{$\whitebullet$};
			\draw[name path=stringB]
				(0,-2) 		node[below]{$j$} to[out=45, in=-45, curved] (0,2);
			\draw[name intersections={of=stringA and stringB}]
				(intersection-1) node[deco]{$\whitebullet$}
				(intersection-2) node[deco]{$\whitebullet$};
		\end{tikzpicture}
		&=
		\begin{cases}
			\mathmakebox[\algnRef][l]{\mathmakebox[\algnRefB][c]{%
				\tikz[diag] \draw
					(-2,-1) node[below]{$i$} --(-2,1)
					(0,-1) node[below]{$j$} --(0,1)
					(2,-1) node[below]{$i=k$} --(2,1);
			}}
			& \text{if $i=k$ and $i\conn j$,}\\
			\mathmakebox[\algnRef][l]{\mathmakebox[\algnRefB][c]{0}}
			& \text{otherwise.}
		\end{cases}
	\end{align}
	\end{itemize}
	For every odd index, $\HC_n(C)$ contains a copy of $\NHC_n$.
\end{definition}
\begin{lemma}%
	\label{lemma:quiver-Hecke-super-embedding}%
	Choose numbers $\gamma_{ij} \in k$ such that $\gamma_{ij}=1$ if at least one of $i$, $j$ has even parity,
		$\gamma_{ii}=\frac{1}{2}$ if $i$ has odd parity and $\gamma_{ij}\gamma_{ji} = - \frac{1}{2}$ otherwise.
	There is a morphism
	\begin{align*}
			\iota:\sH_n(C) &\longto \HC_n(C),\\*
			x_{k,\nu}
				& \longmapsto (\cl_{k,\nu})^{\pty{\nu_k}} y_{k,\nu}\\*
			\tau_{k,\nu}
				&\longmapsto \gamma_{\nu_k, \nu_{k+1}} (\cl_{k,\nu}-\cl_{k+1,\nu})^{\pty{\nu_k}\pty{\nu_{k+1}}} \sigma_{k,\nu};
	\end{align*}
	see \autocite[thm.\ 3.13]{KKT:Quiver-Hecke-Superalgebras}.
\end{lemma}
\begin{proof}
	We verify the well-definedness of this map.
	We have to check compatibility with the following relations:
	\begin{itemize}
	\item Compatibility with \eqref{eq:bullets crossings 1}:
		If one of the indices $i$ and $j$ has even parity,
		domain and codomain locally reduce to the ordinary KLR-algebra,
		so nothing remains to prove.
		Let us thus assume that both $i$ and $j$ have odd parity.
		\begingroup
		\tikzset{every diag/.append style={smaller}}
		\delimiterfactor=600
		\delimitershortfall=20ex
		\begin{multline}
				\tfrac 1{\gamma_{ij}}\iota(\tau_{i,\nu}x_{k+1,\nu}  + \tau_{i,\nu}x_{k+1,\nu})
				= \tfrac 1{\gamma_{ij}} \iota\left(
				\tikz[diag] \draw
					(-1,-1) node[below]{$i$} -- (1,1)
					(1,-1) node[below]{$j$} --
						node[very near start, rot, deco]{$\bullet$}
						(-1,1);
				+
				\tikz[diag] \draw
					(-1,-1) node[below]{$i$} -- (1,1)
					(1,-1) node[below]{$j$} --
						node[very near end, rot, deco]{$\bullet$}
						(-1,1);
				\right)
				=
				\left(
				\tikz[diag] \draw
					(-1,-1) node[below]{$i$} -- (1,1)
					(1,-1) node[below]{$j$} --
						node[very near  end, rot, deco]{$\whitelozenge$}
						node[near start, rot, deco]{$\whitelozenge$}
						node[very near start, rot, deco]{$\blacklozenge$}
						node[deco]{$\whitebullet$}
						(-1,1);
				-
				\tikz[diag] \draw
					(-1,-1) node[below]{$i$} --
						node[very near end, rot, deco]{$\whitelozenge$} (1,1)
					(1,-1) node[below]{$j$} --
						node[near start, rot, deco]{$\whitelozenge$}
						node[very near start, rot, deco]{$\blacklozenge$}
						node[deco]{$\whitebullet$}
						(-1,1);
				\right)
				+
				\left(
				\tikz[diag] \draw
					(-1,-1) node[below]{$i$} -- (1,1)
					(1,-1) node[below]{$j$} --
						node[very near end, rot, deco]{$\whitelozenge$}
						node[near end, rot, deco]{$\blacklozenge$}
						node[to end, rot, deco]{$\whitelozenge$}
						node[deco]{$\whitebullet$}
						(-1,1);
				-
				\tikz[diag] \draw
					(-1,-1) node[below]{$i$} -- node[very near end, rot, deco]{$\whitelozenge$} (1,1)
					(1,-1) node[below]{$j$} --
						node[to end, rot, deco]{$\whitelozenge$}
						node[near end, rot, deco]{$\blacklozenge$}
						node[deco]{$\whitebullet$}
						(-1,1);
				\right)
				\\
				=
				\left(
					\tikz[diag] \draw
						(-1,-1) node[below]{$i$} -- (1,1)
						(1,-1) node[below]{$j$} --
							node[very near start, rot, deco]{$\blacklozenge$}
							node[near end, rot, deco]{$\whitelozenge$}
						 	node[very near end, rot, deco]{$\whitelozenge$}
						 	node[deco]{$\whitebullet$}
						 	(-1,1);
					-
					\tikz[diag] \draw
						(-1,-1) node[below]{$i$} --
							node[very near end, rot, deco]{$\whitelozenge$}
							(1,1)
						(1,-1) node[below]{$j$} --
							node[very near start, rot, deco]{$\blacklozenge$}
						 	node[near end, rot, deco]{$\whitelozenge$}
						 	node[deco]{$\whitebullet$}
						 	(-1,1);
				\right)
				+
				\left(
					\tikz[diag] \draw
						(-1,-1) node[below]{$i$} -- (1,1)
						(1,-1) node[below]{$j$} to
							node[very near end, rot, deco]{$\whitelozenge$}
							node[near end, rot, deco]{$\blacklozenge$}
						 	node[to end, rot, deco]{$\whitelozenge$}
						 	node[deco]{$\whitebullet$}
						 	(-1,1);
					-
					\tikz[diag] \draw
						(-1,-1) node[below]{$i$} --
							node[to end, rot, deco]{$\whitelozenge$}
							(1,1)
						(1,-1) node[below]{$j$} to
							node[very near end, rot, deco]{$\whitelozenge$}
							node[near end, rot, deco]{$\blacklozenge$}
							node[deco]{$\whitebullet$}
						 	(-1,1);
				\right)
				\label{eq:compatibility bullet crossing}
			\end{multline}
			by \cref{eq:qHC:sliding-white-diamonds}.
			If $i\neq j$, then \cref{eq:qHC:sliding-black-diamonds} asserts
			that we can slide all diamonds across the crossing.
			According to the commutativity relations for
			$\tikz[diag, inline] \draw (0,-1) -- node[rot, deco]{$\blacklozenge$} (0,1);$'s
			and $\tikz[diag, inline] \draw (0,-1) -- node[rot, deco]{$\whitelozenge$} (0,1);$'s,
			we obtain for $i\neq j$:
			\begin{equation*}
				\tfrac 1{\gamma_{ij}}\iota(\tau_{i,\nu}x_{k+1,\nu}  + \tau_{i,\nu}x_{k+1,\nu})
				=
					\left(
					-
					\tikz[diag] \draw
						(-1,-1) node[below]{$i$} -- (1,1)
						(1,-1) node[below]{$j$} to
							node[very near end, rot, deco]{$\whitelozenge$}
							node[to end, rot, deco]{$\whitelozenge$}
						 	node[near end, rot, deco]{$\blacklozenge$}
						 	node[deco]{$\whitebullet$}
						 	(-1,1);
					+
					\tikz[diag] \draw
						(-1,-1) node[below]{$i$} --
							node[to end, rot, deco]{$\whitelozenge$}
							(1,1)
						(1,-1) node[below]{$j$} --
							node[near end, rot, deco]{$\blacklozenge$}
						 	node[very near end, rot, deco]{$\whitelozenge$}
						 	node[deco]{$\whitebullet$}
						 	(-1,1);
					\right)
					+
					\left(
					\tikz[diag] \draw
						(-1,-1) node[below]{$i$} -- (1,1)
						(1,-1) node[below]{$j$} to
							node[very near end, rot, deco]{$\whitelozenge$}
							node[to end, rot, deco]{$\whitelozenge$}
						 	node[near end, rot, deco]{$\blacklozenge$}
						 	node[deco]{$\whitebullet$}
						 	(-1,1);
					-
					\tikz[diag] \draw
						(-1,-1) node[below]{$i$} --
							node[to end, rot, deco]{$\whitelozenge$}
							(1,1)
						(1,-1) node[below]{$j$} --
							node[near end, rot, deco]{$\blacklozenge$}
						 	node[very near end, rot, deco]{$\whitelozenge$}
						 	node[deco]{$\whitebullet$}
						 	(-1,1);
					\right)
				= 0.
			\end{equation*}
			If $i=j$, we obtain from \cref{eq:qHC:sliding-black-diamonds}:
			\begin{multline*}
				\tfrac 1{\gamma_{ij}}\iota(\tau_{i,\nu}x_{k+1,\nu}  + \tau_{i,\nu}x_{k+1,\nu})
				=
				\left(
					\tikz[diag] \draw
						(-1,-1) node[below]{$i$} -- (1,1)
						(1,-1) node[below]{$i$} to
							node[very near end, rot, deco]{$\whitelozenge$}
							node[near end, rot, deco]{$\whitelozenge$}
						 	node[to end, rot, deco]{$\blacklozenge$}
						 	node[deco]{$\whitebullet$}
						 	(-1,1);
					+
					\tikz[diag] \draw
						(-1,-1) node[below]{$i$} --
							node[pos=.625,rot, deco]{$\whitelozenge$}
							node[pos=.325,rot, deco]{$\whitelozenge$}
							(-1,1)
						 (1,-1) node[below]{$i$} --
						 	(1,1);
					-
					\tikz[diag] \draw
						(-1,-1) node[below]{$i$} --
							node[near start,rot, deco]{$\whitelozenge$}
							node[midway,rot, deco]{$\whitelozenge$}
							node[near end,rot, deco]{$\whitelozenge$}
							(-1,1)
						 (1,-1) node[below]{$i$} --
							node[near start,rot, deco]{$\whitelozenge$}
						 	(1,1);
					-
					\tikz[diag] \draw
						(-1,-1) node[below]{$i$} --
							node[very near end, rot, deco]{$\whitelozenge$}
							(1,1)
						(1,-1) node[below]{$i$} to
							node[near end, rot, deco]{$\blacklozenge$}
						 	node[to end, rot, deco]{$\whitelozenge$}
						 	node[deco]{$\whitebullet$}
						 	(-1,1);
					-
					\tikz[diag] \draw
						(-1,-1) node[below]{$i$} --
							node[pos=.325,rot, deco]{$\whitelozenge$}
							(-1,1)
						 (1,-1) node[below]{$i$} --
							node[pos=.625,rot, deco]{$\whitelozenge$}
						 	(1,1);
					+
					\tikz[diag] \draw
						(-1,-1) node[below]{$i$} --
							node[pos=.325,rot, deco]{$\whitelozenge$}
							node[pos=.625,rot, deco]{$\whitelozenge$}
							(-1,1)
						 (1,-1) node[below]{$i$} --
						 	node[very near start,rot, deco]{$\whitelozenge$}
							node[very near end,rot, deco]{$\whitelozenge$}
						 	(1,1);
				\right) + {} \\{} +
				\left(
					-
					\tikz[diag] \draw
					(-1,-1) node[below]{$i$} -- (1,1)
					(1,-1) node[below]{$i$} to
						node[very near end, rot, deco]{$\whitelozenge$}
						node[near end, rot, deco]{$\whitelozenge$}
					 	node[to end, rot, deco]{$\blacklozenge$}
					 	node[deco]{$\whitebullet$}
					 	(-1,1);
					 +
					 \tikz[diag] \draw
						(-1,-1) node[below]{$i$} --
							node[very near end, rot, deco]{$\whitelozenge$}
							(1,1)
						(1,-1) node[below]{$i$} to
							node[near end, rot, deco]{$\blacklozenge$}
					 		node[to end, rot, deco]{$\whitelozenge$}
					 		node[deco]{$\whitebullet$}
					 		(-1,1);
				\right)
				=2
				\tikz[diag] \draw
					(-1,-1) node[below]{$i$} --
						(-1,1)
					 (1,-1) node[below]{$i$} --
					 	(1,1);.
	 	\end{multline*}
		\endgroup%
	\item Compatibility with \eqref{eq:bullets crossings 2}: Similar.
	\item Compatibility with \eqref{eq:double crossing}:
		If $i=j$, this relation is obviously satisfied.
		For $i\neq j$, nothing remains to be proven if one index is even;
		we thus assume $|i||j|=1$.
		\begingroup
		\tikzset{every diag/.append style={smaller}}
		\delimiterfactor=700
		\delimitershortfall=20ex
		\begin{multline*}
			\underbrace{\frac{1}{\gamma_{ij}\gamma_{ji}}}_{-2}
			\iota\left(
				\begin{tikzpicture}[diag,smaller,smaller,smaller]
					\draw[curved, name path=stringA]
						(-1,-2) node[below]{$i$} 	to[out=+45, in=-45] (-1,2);
					\draw[curved, name path=stringB]
						(1,-2) node[below]{$j$} 	to[out=135,in=-135] (1,2);
				\end{tikzpicture}
			\right)
			=
			\frac{1}{\gamma_{ji}}
			\left(
				\tikz[diag] \draw  (-1,-1) -- (1,1) (1,-1) -- node[deco]{$\whitebullet$} node[deco, near end, rot]{$\whitelozenge$}(-1,1);
				- \tikz[diag] \draw  (-1,-1) -- node[deco, near end, rot]{$\whitelozenge$} (1,1) (1,-1) -- node[deco]{$\whitebullet$} (-1,1);
			\right)
			\iota\left(\tikz[diag] \draw (-1,-1) -- (1,1) (1,-1) -- (-1,1); \right)
			=
			\left(
				\begin{tikzpicture}[diag]
					\draw[curved, name path=stringA]
						(-1,-2) node[below]{$i$} 	to[out=+45, in=-45] node[deco,rot, very near end]{$\whitelozenge$}(-1,2);
					\draw[curved, name path=stringB]
						(1,-2) node[below]{$j$} 	to[out=135,in=-135] node[deco,rot]{$\whitelozenge$}(1,2);
					\draw[name intersections={of=stringA and stringB}]
						(intersection-1) node[deco]{$\whitebullet$}
						(intersection-2) node[deco]{$\whitebullet$};
				\end{tikzpicture}
				-
				\begin{tikzpicture}[diag]
					\draw[curved, name path=stringA]
						(-1,-2) node[below]{$i$} 	to[out=+45, in=-45] (-1,2);
					\draw[curved, name path=stringB]
						(1,-2) node[below]{$j$} 	to[out=135,in=-135]
							node[deco,rot]{$\whitelozenge$}
							node[deco,rot,very near end]{$\whitelozenge$}
							(1,2);
					\draw[name intersections={of=stringA and stringB}]
						(intersection-1) node[deco]{$\whitebullet$}
						(intersection-2) node[deco]{$\whitebullet$};
				\end{tikzpicture}
			\right)+\left(
				\begin{tikzpicture}[diag]
					\draw[curved, name path=stringA]
						(-1,-2) node[below]{$i$} 	to[out=+45, in=-45]
							node[deco,rot]{$\whitelozenge$}
							node[deco,rot,very near end]{$\whitelozenge$}
							(-1,2);
					\draw[curved, name path=stringB]
						(1,-2) node[below]{$j$} 	to[out=135,in=-135] (1,2);
					\draw[name intersections={of=stringA and stringB}]
						(intersection-1) node[deco]{$\whitebullet$}
						(intersection-2) node[deco]{$\whitebullet$};
				\end{tikzpicture}
				+
				\begin{tikzpicture}[diag]
					\draw[curved, name path=stringA]
						(-1,-2) node[below]{$i$} 	to[out=+45, in=-45]
							node[deco,rot]{$\whitelozenge$}(-1,2);
					\draw[curved, name path=stringB]
						(1,-2) node[below]{$j$} 	to[out=135,in=-135]
							node[deco,rot,very near end]{$\whitelozenge$} (1,2);
					\draw[name intersections={of=stringA and stringB}]
						(intersection-1) node[deco]{$\whitebullet$}
						(intersection-2) node[deco]{$\whitebullet$};
				\end{tikzpicture}
			\right)
			\\
			=
				\begin{tikzpicture}[diag]
					\draw[curved, name path=stringA]
						(-1,-2) node[below]{$i$} 	to[out=+45, in=-45] node[deco,rot,pos=0.95]{$\whitelozenge$}(-1,2);
					\draw[curved, name path=stringB]
						(1,-2) node[below]{$j$} 	to[out=135,in=-135] node[deco,rot,very near end]{$\whitelozenge$}(1,2);
					\draw[name intersections={of=stringA and stringB}]
						(intersection-1) node[deco]{$\whitebullet$}
						(intersection-2) node[deco]{$\whitebullet$};
				\end{tikzpicture}
				-2
				\begin{tikzpicture}[diag]
					\draw[curved, name path=stringA]
						(-1,-2) node[below]{$i$} 	to[out=+45, in=-45] (-1,2);
					\draw[curved, name path=stringB]
						(1,-2) node[below]{$j$} 	to[out=135,in=-135]
							(1,2);
					\draw[name intersections={of=stringA and stringB}]
						(intersection-1) node[deco]{$\whitebullet$}
						(intersection-2) node[deco]{$\whitebullet$};
				\end{tikzpicture}
				-
				\begin{tikzpicture}[diag]
					\draw[curved, name path=stringA]
						(-1,-2) node[below]{$i$} 	to[out=+45, in=-45]
							node[deco,rot, pos=0.95]{$\whitelozenge$}
							(-1,2);
					\draw[curved, name path=stringB]
						(1,-2) node[below]{$j$} 	to[out=135,in=-135]
							node[deco,rot,very near end]{$\whitelozenge$}
							 (1,2);
					\draw[name intersections={of=stringA and stringB}]
						(intersection-1) node[deco]{$\whitebullet$}
						(intersection-2) node[deco]{$\whitebullet$};
				\end{tikzpicture}
		\end{multline*}
		because we may drag a $\tikz[diag, inline] \draw (0,-1) -- node[rot, deco]{$\whitelozenge$} (0,1);$ across crossings by \cref{eq:qHC:sliding-white-diamonds}.
		If $i\disc j$, this equals
		$-2\tikz[diag, scale=1.25, inline] \draw (-1,-1) node[left]{$i$} -- (-1,1) (1,-1)node[right]{$j$} -- (1,1);$
		because we may resolve the double crossings by \cref{eq:qHC:double-crossing}.
		If $i\conn j$, we calculate
		\begin{multline*}
				\begin{tikzpicture}[diag]
					\draw[curved, name path=stringA]
						(-1,-2) node[below]{$i$} 	to[out=+45, in=-45] (-1,2);
					\draw[curved, name path=stringB]
						(1,-2) node[below]{$j$} 	to[out=135,in=-135]
							(1,2);
					\draw[name intersections={of=stringA and stringB}]
						(intersection-1) node[deco]{$\whitebullet$}
						(intersection-2) node[deco]{$\whitebullet$};
				\end{tikzpicture}
			=
				(-1)^{\frac{d_{ij}}{2}}
				\cdot
				\tikz[diag] \draw
					(-1,-1) node[below]{$i$} --
						node[rot,deco,label=-80:$d_{ij}$]{$\blacklozenge$}
						(-1,1)
					(1,-1) node[below]{$j$} -- (1,1);
				+
				(-1)^{\frac{d_{ji}}{2}}
				t_{ji}
				\tikz[diag] \draw
					(-1,-1) node[below]{$i$} -- (-1,1)
					(1,-1) node[below]{$j$} --
						node[deco,rot,label=-80:$d_{ji}$]{$\blacklozenge$}
						(1,1);
			=
				\tikz[diag] \draw
					(-1,-1) node[below]{$i$} --
						node[rot, deco,pos=.625]{$\whitelozenge$}
						node[rot, deco,pos=.375]{$\blacklozenge$}
						node{$\biggl(\quad\biggr)^{\mathrlap{d_{ij}}}$}
						(-1,1)
					(1,-1) node[below]{$j$} -- (1,1);
				+
				\tikz[diag] \draw
					(-1,-1) node[below]{$i$} -- (-1,1)
					(1,-1) node[below]{$j$} --
						node[rot, deco,pos=.625]{$\whitelozenge$}
						node[rot, deco,pos=.375]{$\blacklozenge$}
						node{$\biggl(\quad\biggr)^{\mathrlap{d_{ji}}}$}
						(1,1);
			=
				\iota\left(
				\tikz[diag] \draw
					(-1,-1) node[below]{$i$} --
						node[deco]{$\bullet$}
						node[deco]{$\smash{\phantom{{}^{d_{ij}}\bullet}^{d_{ij}}}\vphantom{\bullet}$}
						(-1,1)
					(1,-1) node[below]{$j$} -- (1,1);
				+
				\tikz[diag] \draw
					(-1,-1) node[below]{$i$} -- (-1,1)
					(1,-1) node[below]{$j$} --
						node[deco]{$\bullet$}
						node[deco]{$\smash{\phantom{{}^{d_{ij}}\bullet}^{d_{ij}}}\vphantom{\bullet}$}
						(1,1);
				\right).
		\end{multline*}
		\endgroup
	\item Compatibility with \eqref{eq:braid relation}:
		Assume \wlofg\ that $k=1$.
		The braid relation then follows from the computation
		\begin{align*}
			&\eqsp\tau_{1,s_2s_1(\nu)} \tau_{2,s_1(\nu)} \tau_{1,\nu}\\
			&=	(\cl_{1}-\cl_{2})^{\pty{s_2s_1(\nu)_1}\pty{s_2s_1(\nu)_2}} \sigma_1\,
				(\cl_{2}-\cl_{3})^{\pty{s_1(\nu)_2}\pty{s_1(\nu)_3}} \sigma_2\,
				(\cl_{1}-\cl_{2})^{\pty{\nu_1}\pty{\nu_2}} \sigma_{1,\nu}\\
			&=  (\cl_{1}-\cl_{2})^{\pty{\nu_1}\pty{\nu_2}}
				(\cl_{1}-\cl_{3})^{\pty{\nu_1}\pty{\nu_3}}
				(\cl_{2}-\cl_{3})^{\pty{\nu_2}\pty{\nu_3}}
				\sigma_1 \sigma_2 \sigma_{1,\nu}\\
			&\stackrel{*}{=}
				(\cl_{2}-\cl_{3})^{\pty{\nu_2}\pty{\nu_3}}
				(\cl_{1}-\cl_{3})^{\pty{\nu_1}\pty{\nu_3}}
				(\cl_{1}-\cl_{2})^{\pty{\nu_1}\pty{\nu_2}}
				\sigma_2 \sigma_1 \sigma_{2,\nu}\\
			&=	(\cl_{2}-\cl_{3})^{\pty{s_1s_2(\nu)_2}\pty{s_1s_2(\nu)_3}} \sigma_2\,
				(\cl_{1}-\cl_{2})^{\pty{s_2(\nu)_1}\pty{s_2(\nu)_2}} \sigma_1\,
				(\cl_{2}-\cl_{3})^{\pty{\nu_2}\pty{\nu_3}} \sigma_{2,\nu}\\
			&=	\tau_{2,s_1s_2(\nu)}\tau_{1,s_2(\nu)}\tau_{2,\nu}.
		\end{align*}
		The equation $*$ is clear if at least one index is even
		and is verified by multiplying out the $\cl_k$'s if every index is odd. \qedhere
	\end{itemize}
\end{proof}

\subsection{Faithful polynomial representation}
\label{sec:faithful-super-polynomial-rep}
We define a representation of the quiver Hecke Clifford superalgebra $\NHC_n(C)$
as an analogue to the construction in \autocite[§2.3]{KL:quantum-group-I}.
\begin{definition}
	Let $\PolC_n(C)$ be the $k$-linear supercategory defined as follows.
	\begin{itemize}
		\item Its objects are the free $k$-super vector spaces $\PolC_\nu ≔ k[y_{1,\nu},\ldots, y_{n,\nu}, \cl_{1,\nu},\dotsc, \cl_{n,\nu}]$,
		indexed by sequences $\nu\in I^n$.
		The symbols $y_{k,\nu}$ and $\cl_{k,\nu}$ have the same parities
		and satisfy the same relations
		as in \cref{def:super-klr-algebra:HC(C):diagramatically}.
		\item Its morphisms are super vector space homomorphisms.
	\end{itemize}
\end{definition}
Given a sequence $\nu\in I^n$,
$\PolC_n(C)$ and $\HC_n(C)$ both have full subcategories $\PolC(\nu)$ and $\NH_n(\nu)$
with objects $\{\PolC_{\mu}\mid \mu∈S_n\nu\}$
and string diagrams between these, respectively.

\begin{proposition}
	\label{prop:super-polynomials-are-faithful-rep}
	$\HC(\nu)$ acts faithfully on $\PolC(\nu)$,
		where $y_{k,\nu}$ and $\cl_{k,\nu}$ act by multiplication and $\sigma_{k,\nu}$ acts by
			\begin{equation}
			\label{eq:Definition-of-Faithful-Action}
				\sigma_{k,\nu} =
				\begin{cases}
					s_k 			& \text{if $\nu_k\disc \nu_{k+1}$ or $\nu_k\leftarrow\nu_{k+1}$,}\\
					\delC_k 		& \text{if $\nu_k = \nu_{k+1}$,}\\
					\begin{multlined}
						\Big(
							(\cl_{k,\nu}y_{k,\nu})^{d_{\nu_k,\nu_{k+1}}} 
							+(\cl_{k+1,\nu}y_{k+1,\nu})^{d_{\nu_{k+1},\nu_k}}
						\Big)
						s_k
					\end{multlined}
									& \text{if $\nu_k\rightarrow\nu_{k+1}$.}
			\end{cases}
		\end{equation}
\end{proposition}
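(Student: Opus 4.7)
The plan is to establish both well-definedness of the prescribed operators as an $\HC_n(\nu)$-action and its faithfulness. First I would check that the operators in \eqref{eq:Definition-of-Faithful-Action}, together with multiplication by $y_{k,\nu}$ and $\cl_{k,\nu}$, respect every local relation of \cref{def:super-klr-algebra:HC(C):diagramatically}. The commutation relations among the $y$'s and $\cl$'s and the relation $\cl_{k,\nu}=0$ for even $|\nu_k|$ hold by construction of $\PolC_\nu$. The single-crossing relations \eqref{eq:qHC:sliding-black-diamonds} and \eqref{eq:qHC:sliding-white-diamonds} reduce, via case analysis on whether $\nu_k = \nu_{k+1}$, $\nu_k \disc \nu_{k+1}$, or $\nu_k \to \nu_{k+1}$, to identities of the form $s_k \circ y_{j,\nu} = y_{s_k(j),\nu}\circ s_k$ up to a sign collected from moving Clifford factors past $y_j$; in the case $\nu_k=\nu_{k+1}$ I would invoke the $s_k$-derivation property of $\delC_k$ proved in \cref{lem:super-klr-algebra:clifford-demazure-relations}.

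The main computational burden is the double-crossing relation \eqref{eq:qHC:double-crossing} and the braid-like three-string relation. For the double crossing, the cases $\nu_k \disc \nu_{k+1}$ and $\nu_k \leftarrow \nu_{k+1}$ yield $s_k^2 = \id$, the case $\nu_k=\nu_{k+1}$ uses $\delC_k^2 = 0$, and the case $\nu_k\to\nu_{k+1}$ requires expanding
\begin{equation*}
\bigl((\cl_{k,\nu}y_{k,\nu})^{d_{\nu_k,\nu_{k+1}}} + (\cl_{k+1,\nu}y_{k+1,\nu})^{d_{\nu_{k+1},\nu_k}}\bigr) s_k \circ \bigl(\dotsb\bigr) s_k
\end{equation*}
and simplifying using $\cl_{i,\nu}^2 = 1$, the anticommutation of distinct $\cl$'s, and the sign $(-1)^{d_{ij}/2}$ that appears because $d_{ij}$ is even whenever $\nu_k$ is odd. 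For the three-string relation I would reduce, by an inductive argument on the number of odd labels, to the classical KLR braid computation of \autocite[§2.3]{KL:quantum-group-I}, controlling the extra signs exactly as in \cref{lemma:quiver-Hecke-super-embedding}.

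For faithfulness I would mimic the strategy from \autocite[§2.3]{KL:quantum-group-I}. Via \cref{lemma:quiver-Hecke-super-embedding}, every element of $\HC_n(\nu)$ admits a normal form $\sum_{w\in S_n} p_w \sigma_w$ with $p_w \in \PolC_\nu$ and $\sigma_w$ a chosen reduced word. I would then pick, for each $w$ of maximal length in the support, a monomial $f \in \PolC_\nu$ whose image under $\sigma_w$ is a non-trivial monomial strictly greater, in a chosen lexicographic order on $y$-monomials and $\cl$-words, than the image under any $\sigma_{w'}$ with $w' \neq w$ of comparable length. Comparing leading terms then forces $p_w = 0$, and descending induction on Bruhat order eliminates all summands.

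The principal obstacle is the sign bookkeeping: in both the double-crossing computation and the faithfulness comparison, Clifford factors must be dragged past $y$'s and across $s_k$, each move producing a potential minus sign governed by the super interchange law \eqref{eq:super-interchange}. I expect the verification of the $\nu_k\to\nu_{k+1}$ double-crossing case to be the technical crux, since the two Clifford monomials $(\cl_{k,\nu}y_{k,\nu})^{d_{ij}}$ and $(\cl_{k+1,\nu}y_{k+1,\nu})^{d_{ji}}$ interact non-trivially and the cross-terms must cancel against contributions from $s_k \circ s_k = \id$ to leave the exact right-hand side of \eqref{eq:qHC:double-crossing}.
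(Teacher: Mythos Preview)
Your well-definedness sketch is reasonable and in line with what the paper (following \autocite{KL:quantum-group-I}) essentially takes for granted. The faithfulness argument, however, has a genuine gap. First, \cref{lemma:quiver-Hecke-super-embedding} is an embedding of $\sH_n(C)$ into $\HC_n(C)$; it says nothing about a normal form for $\HC_n(\nu)$ itself, so the existence of the expression $\sum_w p_w\sigma_w$ needs an independent spanning-set argument (which the paper supplies by repeatedly applying \crefrange{eq:qHC:sliding-black-diamonds}{eq:qHC:double-crossing}). Second, and more seriously, your ``pick a monomial $f$ whose image under $\sigma_w$ dominates'' step does not go through when several entries of $\nu$ coincide. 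If $\nu=(i,\dotsc,i)$ then every $\sigma_w$ acts by $\delC_w$, and distinguishing $\delC_w$ from $\delC_{w'}$ of the same length by a leading-term argument on \emph{monomials} fails: both lower degree by $\ell(w)$ and there is no single lexicographic order in which one obviously beats the other. What one actually needs is a test \emph{polynomial} $\sC_{w^{-1}}$ with the property $\delC_v\sC_w\in\Cl_I^*$ iff $v=w^{-1}$ and $=0$ for other $v$ of the same length---i.e.\ Clifford Schubert polynomials and the content of \cref{lem:relations-schubert-polynomials,lem:Clifford-NilHecke-acts-Faithfully}. Your proposal does not mention these, and without them the induction collapses at the base.

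The paper's route is structurally different: it fixes the explicit spanning set ${}_{\mu'}\BC_\mu=\{\sigma_\pi y^{\bm\alpha}\cl^{\bm\beta}\}$ and proves linear independence by induction on $\mu$ with respect to a total order on $I^n$ compatible with the edge orientation. The induction \emph{step} is a triangularity argument under the map $\sigma_{k,\mu'}\cdot(-)$ relative to a partial order on ${}_{\mu'}\BC_\mu$, close in spirit to your Bruhat descent. But the \emph{base} (the lexicographically minimal $\mu$) decomposes each permutation as $w=uv$ with $u\in S_{n_1}\times\dotsb$ permuting identical labels and $v$ minimal, and then invokes the faithfulness of $\NCC_{n_j}$ on $\PolC_{n_j}$---which is exactly \cref{lem:Clifford-NilHecke-acts-Faithfully}, proved later via Schubert polynomials. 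So the Schubert input is unavoidable either way; the paper isolates it as a forward reference, whereas your sketch hides it inside the ``pick a monomial'' step where it cannot actually be carried out.
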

\begin{proof}
	For the non-super case,
	faithfulness of the polynomial representation of the quiver Hecke algebra
	has been proven in \autocite[§2.3]{KL:quantum-group-I};
	Our super-algebraic setup necessitates only minor alterations,
	which we make explicit in the following.

	\proofsection{Spanning set}
	As a vector space, $\HC_n(\nu)$
	is isomorphic to a direct sum $⨁_{\mu, \mu' ∈ S_n\ldot\nu}({}_{\mu'}\!\HC_{\mu})$
	of subspaces ${}_{\mu'}\!\HC_{\mu}$,
	each of which contains diagrams connecting the bottom sequence $\mu$
	with the top sequence $\mu'$.
	Let ${}_{\mu'}S_\mu$ be the subgroup ${}_{\mu'}S_\mu = \{w ∈ S_n \mid \mu_{w(k)} = \mu'_k\}$ of$S_n$.

	Given a diagram in ${}_{\mu'}\!\HC_{\mu}$,
	assume there are two strings intersecting more than once.
	According to the relation \cref{eq:qHC:double-crossing},
	it can be replaced by a linear combination of diagrams
	with fewer crossings, possibly at the cost of introducing more diamonds.
	Diamonds can be slid across crossings
	by \crefrange{eq:qHC:sliding-black-diamonds}{eq:qHC:sliding-white-diamonds},
	possibly at the cost of introducing	summands with less crossings,
	and past other diamonds, possibly introducing signs.

	The vector space ${}_{\mu'}\HC_{\mu}$ therefore is spanned by diagrams
	with any two strands intersecting at most once,
	all black diamonds below all crossings,
	and all white diamonds below the black ones
	such that there is at most one white diamond per string.
	The white ones may be arranged descending to the right.
	A typical diagram in this spanning set looks as follows:
	\begin{equation*}
		\tikz[diag]\draw
			(0,-2) node [below]{$\mu_1$} --
				node [rot, at end, deco]{$\blacklozenge$}
				node [rot, midway, deco]{$\whitelozenge$}
				(0,-1)
				to[out=90,in=-90] (1,2)
			(1,-2) node [below]{$\mu_2$}  -- (1,-1) to[out=90,in=-90]  (2,2)
			(2,-2) node [below]{$\mu_3$}  --
				node [rot, at end, deco]{$\blacklozenge$}
				node [rot, near end, deco]{$\blacklozenge$}
				node [rot, to start, deco]{$\whitelozenge$}
				(2,-1)
				to[out=90,in=-90](4,2)
			(3,-2) node [below]{$\mu_4$} --
				node [rot, at end, deco]{$\blacklozenge$}
				node [rot, near start, deco]{$\whitelozenge$}
				(3,-1)
				 to[out=90,in=-90] (0,2)
			(4,-2)  node [below]{$\mu_5$} -- (4,-1)
				to[out=90, in=-90] (3,2);
	\end{equation*}
	This spanning set can be written as
	\newcommand{\BC}{\operatorname{\mathsf{B}\mathfrak{C}}}
	\begin{equation}
			{}_{\mu'}\BC_\mu = \{
				\sigma_{\pi,\mu}^{}
				y_{\mu}^{\bm{\alpha}}
				\cl_{\mu}^{\bm{\beta}}
			\}
			=\{
				\sigma_{\pi,\mu'}^{}
				y_{1,\mu}^{\alpha_1}\dotsm y_{n,\mu}^{\alpha_n}\,
				\cl_{1,\mu}^{\beta_1}\dotsm \cl_{n,\mu}^{\beta_n}
			\},
	\end{equation}
	where $\mu$ and $\mu'$ are permutations of $\nu$,
	$\pi$ is a reduced expression for an element of ${}_{\mu'}S_\mu$,
	$\bm{\alpha}∈\mathbf{N}^n$, $\bm{\beta}∈\{0,1\}^n$ are multiindices,
	and $\cl_\mu^{\bm{\beta}}$ is an ordered monomial.
	Choose a complete order $\leq$ on $I$ such that $i<j$ whenever there is an edge $i\to j$.
	This order induces a lexicographic order on $I^n$.
	We show by induction on $\mu$ \wrt\ this order on $I^n$
	that ${}_{\mu'}\BC_\mu$ is a $k$-basis on which $\HC(\nu)$ acts faithfully.

	\proofsection{Base of induction}
	Let $n_i$ be the number of entries $i$ in $\nu$.
	Let
	\[
		\mu=(\underbrace{i_1, \dotsc, i_1}_{n_{1}}, \underbrace{i_2,\dotsc,i_2}_{n_2}, \dotsc)
	\]
	such that $i_1<i_2<\dotsb$.
	The tuple $\mu$ is the lowest element in the orbit $S_n\nu$ \wrt\ the order $\leq$.
	We may write a permutation $w ∈ {}_{\nu'}S_\nu$ as $w=uv$
	such that $u ∈ S_{n_1} \times S_{n_2} \times \dotsb$ permutes the $i_1$, $i_2$ etc.\ independently,
	and $v$ is of minimal length,
	\ie, it does not interchange identically labelled strings and interchanges distinct labels at most once.
	The spanning set can thus be written as
	\[
		{}_{\nu'}\BC_\nu = \{ \sigma_{u,\mu}^{} \sigma_{v,\mu}^{} y_\mu^{\bm{\alpha}}  \cl_\mu^{\bm{\beta}}
		\}.
	\]
	We make sure that these elements act linearly independently on $\PolC(\nu)$.
	\begin{itemize}
		\item The terms $y_\mu^{\bm{\alpha}}\cl_\mu^{\bm{\beta}}$ take $y_\mu^{\bm{\alpha}'}\cl_\mu^{\bm{\beta}'}$
		to $\pm y_\mu^{\bm{\alpha}+\bm{\alpha}'} \cl_\mu^{\bm{\beta}+\bm{\beta}'}$.
		\item Since $v$ permutes strings with distinct labels,
			$\sigma_{v,\mu}$ acts by the permutation $v$.
		\item Since the permutation $u = u_1\times\cdots\times u_m ∈ S_{n_1} \times\dotsb\times S_{n_m}$
			permutes only identically labelled strings,
			$\sigma_{u,\nu}$ acts by
			$\delC_{u_1}\times\cdots\times\delC_{u_n}\in\NCC_{n_1}\times\NCC_{n_2}\times\dotsb\times\NCC_{n_m}$.
			We shall prove in \cref{lem:Clifford-NilHecke-acts-Faithfully}
			that the action of $\NCC_n$ on polynomials is faithful.
	\end{itemize}

	\proofsection{Induction step}
	It suffices to show that ${}_{\mu'}\BC_\mu$ is linearly independent.
	We have that ${}_{s_k(\mu')}\BC_\mu$ is linearly independent
	if $\mu'_k$, $\mu'_{k+1}$ are distinct and connected by an edge;
	otherwise, ${}_{s_k(\mu')}\BC_\mu$ maps bijectively to ${}_{\mu'}\BC_\mu$ by $\sigma_{k,\mu'}$.
	The multiplication map $(\sigma_k\cdot): {}_{s_k(\mu')}\BC_\nu \into {}_{\mu'}\BC_\mu$
	is seen to be injective by the same argument as in \autocite{KL:quantum-group-I}:

	Let ${}_{\mu'}\BC_\mu$ be endowed with a partial order $\preceq$
	such that if we assign to an element $\sigma_{w,\mu}  y_\mu^{\bm{\alpha}} \cl_\mu^{\bm{\beta}}$
	of the spanning set the tuple $(\ell(w), \bm{\alpha}, \bm{\beta})$,
	then the order $\preceq$ coincides with the lexicographic ordering $\leq$ on these tuples.
	Define a map
	\begin{equation}
		\begin{split}
			\varsigma\colon  {}_{s_k(\mu')}\BC_\mu &\to {}_{\mu'}\BC_\mu\\
			D
			& \mapsto
			\begin{cases}
				s_k D &
				\text{if the $k$-th and $(k+1)$-st strand do not intersect,}\\
				t_{\mu_k,\mu_{k+1}} D^* (\cl_{k,\mu}y_{k,\mu})^{d_{\mu_k,\mu_{k+1}}}  &
				\text{otherwise,}
			\end{cases}
		\end{split}
	\end{equation}
	where $D^*$ is obtained from $D$ by removing the crossing.
	The map $\varsigma$ is injective,
	and multiplication by $\sigma_{k,\mu'}$ satisfies
	\[
		\sigma_{k,\mu'} \cdot D ∈ \{±\varsigma(D)\} + \sum_{\mathclap{D'\prec\varsigma(D)}} \mathbf ZD'
		\subseteq {}_{\mu'}\BC_\mu.
	\]
	By the induction hypothesis on ${}_{\mu'}\BC_\mu$,
	we obtain that the multiplication map $(\sigma_{k,\mu'}⋅)$ must be injective.
\end{proof}
\begin{definition}
	We endow $\PolC(\nu)$ with a polynomial grading
	by setting $\deg(y_{k,\mu})=1$ and $\deg(\cl_{k,\mu})=0$.
	Additionally setting $\deg(\sigma_{k,\mu})=-1$,
	we also endow $\HC(\nu)$ with a grading.
\end{definition}
\begin{corollary}
	With this grading, $\PolC(\nu)$ is a faithful graded $\HC(\nu)$-module.
\end{corollary}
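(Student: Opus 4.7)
The plan is to note that faithfulness of the action has already been established in \cref{prop:super-polynomials-are-faithful-rep}; the content of the corollary is merely to verify that with the degree assignments of the preceding definition, $\HC(\nu)$ is a well-graded superalgebra and the prescribed action on $\PolC(\nu)$ respects the grading.

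First I would confirm that each defining relation of $\HC_n(C)$ from \cref{def:super-klr-algebra:HC(C):diagramatically} is homogeneous with respect to the assignments $\deg(y_{k,\nu})=1$, $\deg(\cl_{k,\nu})=0$ and $\deg(\sigma_{k,\nu})=-1$. The dot-sliding relations \eqref{eq:qHC:sliding-black-diamonds}--\eqref{eq:qHC:sliding-white-diamonds} are trivially homogeneous, since their two sides differ only by the position of a dot across a crossing. The double-crossing relation \eqref{eq:qHC:double-crossing} is the one that ties $2\deg(\sigma_{k,\nu})$ to the $y$-exponents on the right-hand side, and requires a direct tally of the powers appearing in the $i\conn j$ case.

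Second, I would verify that each generator acts on $\PolC_\nu$ by a map of the prescribed degree. For $y_{k,\nu}$ and $\cl_{k,\nu}$ this is immediate from the definition of the action as multiplication. For $\sigma_{k,\nu}$ the three cases of \eqref{eq:Definition-of-Faithful-Action} are examined in turn; in particular, the case $\nu_k=\nu_{k+1}$ reduces to the statement that the Clifford Demazure operator $\delC_k$ has degree $-1$, which was already built into the construction preceding \cref{lem:super-klr-algebra:clifford-demazure-relations}. Graded faithfulness is then automatic from the ungraded faithfulness in \cref{prop:super-polynomials-are-faithful-rep}: a homogeneous element of $\HC(\nu)$ acting as zero on every graded piece of $\PolC(\nu)$ annihilates the underlying super vector space, hence vanishes. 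I do not anticipate any genuine obstacle; the entire argument is a degree bookkeeping, with the only mildly delicate point being the matching of $y$-exponents in the double-crossing relation.
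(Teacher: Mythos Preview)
Your overall plan is the right one, and it is precisely what the paper leaves implicit (the corollary is stated without proof). However, the specific check you propose will not go through as you anticipate. With the uniform assignment $\deg(\sigma_{k,\nu})=-1$, the double-crossing relation \eqref{eq:qHC:double-crossing} fails to be homogeneous whenever the two strands carry \emph{distinct} labels: for $i\disc j$ the left-hand side has degree $-2$ while the right-hand side is the identity diagram, of degree $0$; for $i\conn j$ the right-hand side carries a positive power of $y$ and hence has non-negative degree, again not $-2$. Correspondingly, in \eqref{eq:Definition-of-Faithful-Action} the crossing $\sigma_{k,\nu}$ acts by $s_k$ (a degree-$0$ map) or by a positive-degree multiplication-then-swap whenever $\nu_k\neq\nu_{k+1}$, not by an operator of degree $-1$. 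So the ``mildly delicate'' bookkeeping you flag is in fact an obstruction: your proposed verification would fail at exactly that point.

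What is really going on is that the paper's grading statement is literally correct only on the NilHecke-type pieces where all strands carry the same colour, which is the only case used in \cref{sec:clifford-symmetric-polynomials} onward. For the full quiver Hecke Clifford algebra one must let $\deg(\sigma_{k,\nu})$ depend on the pair $(\nu_k,\nu_{k+1})$, as in the original Khovanov--Lauda grading; with that label-dependent degree your argument goes through verbatim. You should either record this restriction explicitly or replace the uniform $-1$ by the appropriate label-dependent degree before carrying out the check.
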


\section{Clifford symmetric Polynomials}
\label{sec:clifford-symmetric-polynomials}
\newcommand{\fCl}{\hat{\Cl}}
Consider a $\sZ$-graded index set $I=\{1,\dotsc, n\} = I_0\sqcup I_1$ as above.
From now on,
we consider the quotient superalgebra $\Cl_{I}≔\Cl_n/(\cl_i)_{i\in I_0}$ by the two-sided ideal $(\cl_i)_{i\in I_0}$.
$\PolC_{I}$ and $\delC_i$ are defined analogously to \cref{def:Polynomial-Clifford-Algebra}.

\begin{definition}
	By \cref{lem:super-klr-algebra:clifford-demazure-relations},
	$\SPolC_{I}≔⋂_{k=1}^{n-1}\ker \delC_k = ⋂_{k=1}^{n-1}\im \delC_i$
	is a $\Cl_{I}$-subalgebra of $\PolC_n(I)$.
	We call $\SPolC_{I}$ the algebra of \term{$\delC$-symmetric polynomials}%
	\footnote{%
		One may be tempted to call these supersymmetric polynomials;
		this term has already been coined for another notion though
		\autocite{Stembridge:supersymmetric-polynomials}.
	}.
\end{definition}

In this section,
we want to investigate some of their properties.
To this end, we shall introduce the notion of elementary $\delC$-symmetric polynomials
and show in \cref{prop:Clifford-Polynomials-free} that they generate $\SPolC_{I}$ as  $\Cl_{I}$-superalgebra.

%

\subsection{Interlude: counting graded ranks}
\label{sec:graded-ranks}
\begin{definition}
	Given a graded ring $R$ and a free $\mathbf{Z}$-graded $R$-module $M=⨁_{i∈\mathbf{Z}} M_i$,
	its \term{graded rank} or \term{Poincaré series}
	is the formal power series $\rk_{q,R}(M) ≔ \sum_{i∈\mathbf{Z}} \rk_R (M_i) q^i$,
	seen as an element of the localisation $\mathbf{Z}[[q]]_{(1-q)}$.
	We write $\rk_{q,\mathbf{Z}}$ for the graded rank
	of graded free abelian groups.
	With respect to the field $k$, we denote the graded dimension by $\dim_{q,k}$.
\end{definition}

\begin{definition}
	Let $(W,S)$ be a Coxeter system with a fixed generating set $S$,
	which determines a length function $\ell$ on $W$.
	Its \emph{$q$-order} is $\ord_q(W) ≔ \sum_{w∈W} q^{\ell(w)}$.
\end{definition}

We compute some examples that we shall use later on.
Note that we are working with classical, non-super polynomial rings in this section.
Since $1-q$ is a unit in $\mathbf{Z}[[q]]_{(1-q)}$,
the \term{q-integers} $(n)_q ≔ 1 + \dotsb + q^{n-1}$ can be expressed as $(n)_q = \frac{1-q^n}{1-q}$;
see \autocite{LeStum:q-Integers} for a general introduction.

\begin{lemma}[{\autocite[cf.][§1]{Winkel:Poincare}}]
	\label{lem:poincare-polynomials-coxeter-groups}
	The Coxeter group of type $\mathrm{A}_n$,  $\mathrm{BC}_n$ and  $\mathrm{D}_n$
	have the following $q$-orders:
	\begin{thmlist}
		\item $\ord_q(\mathrm{A}_n) = (n)_q!$ with the \term{$q$-factorial} $(n)_q! ≔ (n)_q (n-1)_q\dotsm (1)_q$,
		\item $\ord_q(\mathrm{BC}_n) = (2n)_q!!$ with the \term{$q$-double factorial} $(2n)_q!! ≔ (2n)_q (n-2)_q \dotsm (2)_q$,
		\item $\ord_q(\mathrm{D}_n) = (2n-2)_q!!(n)_q$.
	\end{thmlist}
	We have included $\ord_q(\mathrm{BC}_n)$	and $\ord_q(\mathrm{D}_n)$ for the sake of completeness
	but will not need them in the following.
\end{lemma}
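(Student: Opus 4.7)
The plan is to reduce each of the three formulas to a product expansion via parabolic coset decomposition. The key tool is the standard identity: if $W'\subseteq W$ is a parabolic subgroup and $X\subseteq W$ denotes the set of minimal-length representatives for the left cosets $W/W'$, then length is additive on the factorisation $w=xw'$ with $x\in X$, $w'\in W'$, and hence
\[
	\ord_q(W)=\Bigl(\sum_{x\in X}q^{\ell(x)}\Bigr)\cdot\ord_q(W').
\]
Computing $\ord_q(W)$ inductively thus reduces to tallying the lengths of the coset representatives at each step.

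For \textup{(i)}, I would take $W=W(\mathrm A_n)=S_n$ and $W'=W(\mathrm A_{n-1})=S_{n-1}$ the stabiliser of the letter $n$. The minimal coset representatives are $e,\,s_{n-1},\,s_{n-1}s_{n-2},\dotsc,\,s_{n-1}s_{n-2}\dotsm s_1$, of lengths $0,1,\dotsc,n-1$ respectively. This yields $\sum_{x\in X}q^{\ell(x)}=(n)_q$, and induction on $n$ gives $\ord_q(S_n)=(n)_q\cdot(n-1)_q!=(n)_q!$.

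For \textup{(ii)} and \textup{(iii)}, the same recipe applies after removing an endpoint of the Dynkin diagram. For $W(\mathrm{BC}_n)$ one has $|X|=2n$ with coset representatives of lengths $0,1,\dotsc,2n-1$, giving the factor $(2n)_q$ and hence $\ord_q(W(\mathrm{BC}_n))=(2n)_q(2n-2)_q\dotsm(2)_q=(2n)_q!!$ by induction. For $W(\mathrm D_n)$ the same strategy works but is more delicate: removing the branch reflection splits the group less uniformly, and the naive inductive factor $(2n-2)_q(n)_q/(n-1)_q$ is not itself a single $q$-integer. It is cleanest here to invoke the Chevalley--Shephard--Todd product formula $\ord_q(W)=\prod_{i=1}^n(d_i)_q$ with the well-known fundamental degrees $d_i=2,3,\dotsc,n$ for $\mathrm A_n$, $d_i=2,4,\dotsc,2n$ for $\mathrm{BC}_n$, and $d_i=2,4,\dotsc,2n-2,n$ for $\mathrm D_n$; each list multiplies out directly to the claimed $q$-(double) factorial.

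The main obstacle is the delicate coset enumeration for $\mathrm D_n$ by pure parabolic induction; but since parts \textup{(ii)} and \textup{(iii)} are not needed in the sequel, the pragmatic solution is to give the self-contained coset argument only for part \textup{(i)} and to cite \autocite{Winkel:Poincare} (or the Chevalley--Shephard--Todd formula) for parts \textup{(ii)} and \textup{(iii)}.
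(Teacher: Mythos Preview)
Your approach and the paper's are essentially the same: both use the parabolic decomposition $W_n = X \cdot W_{n-1}$ and identify the length generating function of the coset representatives $X$. The paper phrases this in terms of inversion sets of explicit (signed) permutations rather than reduced words, but the underlying factorisation is identical, and your argument for type $\mathrm A$ matches the paper's.

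Where you diverge is in parts (ii) and (iii): the paper does \emph{not} retreat to a citation or to Chevalley--Shephard--Todd but carries out the coset enumeration explicitly for $\mathrm{BC}_n$ and $\mathrm D_n$ as well. Your concern about $\mathrm D_n$ is misplaced. The ratio $\ord_q(\mathrm D_n)/\ord_q(\mathrm D_{n-1}) = (2n-2)_q(n)_q/(n-1)_q$ simplifies to $(1+q^{n-1})(n)_q$, which is a genuine polynomial with non-negative coefficients: it is the length generating function of the $2n$ minimal coset representatives of $\mathrm D_n/\mathrm D_{n-1}$, whose lengths are $0,1,\dotsc,n-1,n-1,n,\dotsc,2n-2$ (two representatives of length $n-1$, coming from the two ways the branch node can enter). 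The paper exhibits these representatives concretely as signed permutations and verifies their inversion counts. So the parabolic induction works uniformly for all three types; you simply stopped one step short of seeing that the $\mathrm D_n$ factor, while not a single $q$-integer, is still perfectly tractable.
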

\begin{proof}
	\newcommand{\inv}{\operatorname{inv}}
	\begin{prooflist}
	\item
	\label{proof:orders-of-Coxeter-groups:Sn}
	The length $\ell(w)$ of a permutation $w∈S_n$
	equals the number of \term{inversions} \autocite[prop.\ 1.5.2]{Brenti:Coxeter-Groups},
	\ie, the cardinality of $\inv w ≔ \{(i,j) \mid 1\leq i<j\leq n, w(i)>w(j)\}$.
	Assume that $\sum_{w∈S_n} p(w) = (n)_q!$.
	Consider the permutations
	\[
		\begin{split}
			\pi_0\colon  (1,\dotsc, n+1) &\mapsto (1,\dotsc, n , n+1),\\
			\pi_1\colon  (1,\dotsc, n+1) &\mapsto (1,\dotsc , n+1,  n),\\
			\shortvdotswithin{\mapsto}
			\pi_{n}\colon  (1,\dotsc, n+1) &\mapsto (n+1, 1,\dotsc, n)
		\end{split}
	\]
	contained in $S_{n+1}$, where $\pi_0=e$ is the trivial permutation.
	Moving the rightmost entry in $\pi_0$ to the left will subsequently create new inversions
	such that $\pi_k$ has the $k$ inversions
	\[
		\inv\pi_k = \bigl\{(n-k+1,n-k+2), \dotsc, (n-k+1, n+1)\bigr\},
	\]
	and $\sum_{k} q^{\ell(\pi_k)} = (1+q+\dotsb q^{n}) = (n+1)_q$.
	Since any $w∈S_n$ interchanges only the first $n$ slots,
	the inversions of $w$ and $\pi_k$ do not interfere,
	which gives $\inv (\pi_kw) = w^{-1}(\inv \pi_k) \sqcup \inv(w)$.
	Therefore, $\sum_{k} q^{\ell(\pi_k w)}=(n+1)_q q^{\ell(w)}$.
	By the induction hypothesis,
	letting $w$ traverse all of $S_n$ proves the claim.

	\item
	The Coxeter group $\mathrm{BC}_n$,
	called the \term{hyperoctahedral group} or \term{signed permutation group},
	is the group of permutations $\pi$ of the set $\{\pm 1,\dotsc, \pm n\}$
	such that $\pi(-k)=-\pi(k)$.
	It has as generators the simple transpositions $s_k\colon  \pm k\mapstofrom \pm k+1$ for $1\leq k\leq n-1$
	and the additional generator $s_0\colon  1\mapstofrom -1$.
	A signed permutation $w$ has
	\[
		\inv(w) ≔
		\bigl\{ (i,j) \bigm| 1\leq i<j \leq n, w(i)>w(j)\bigr\}
		\cup
		\bigl\{ (-i,j) \bigm| 1\leq i\leq j \leq n, w(-i)>w(j)\bigr\}.
	\]
	We count the number of inversions as in \cref{proof:orders-of-Coxeter-groups:Sn}.
	Assume that $\ord_q \mathrm{BC}_n = (n)_q!!$,
	and consider the permutations
	\begin{align*}
			\pi_0\colon  (1,\dotsc, n+1) &\mapsto (1,\dotsc, n , n+1),\\*
			\shortvdotswithin{\mapsto}
			\pi_n\colon  (1,\dotsc, n+1) &\mapsto (n+1, 1,\dotsc, n),\\
			\pi_{n+1}\colon  (1,\dotsc, n+1) &\mapsto (-n-1, 1,\dotsc, n),\\*
			\shortvdotswithin{\mapsto}
			\pi_{2n+1}\colon  (1,\dotsc, n+1) &\mapsto (1,\dotsc, w(n), -n-1).
	\end{align*}
	These have
	\[
		\inv \pi_m =
		\begin{cases}
		\bigl\{(n-m+1, n-m+2),\dotsc, (n-m+1, n+1)\bigr\} & \text{if $0\leq m\leq n$,}\\[1em]
		\begin{multlined}[b][\widthof{$\bigl\{(n-m+1, n-m+2),\dotsc, (n-m+1, n+1)\bigr\}$}]
		\bigl\{
				\underbrace{(1,l),\dotsc, (l-1,l)}_{\text{$l-1$ many}},
				\underbrace{(-1,l),\dotsc, (l+1,l)}_{\text{$l-1$ many}}\\[-1.4em]
				\underbrace{(-l,l),\dotsc, (l, n+1)}_{\text{$n-l+2$ many}}
		\bigr\}
		\end{multlined}
		& \text{if $1<l\leq n+1$ for $l≔m-n$.}
		\end{cases}
	\]
	Hence, 	$|\inv \pi_m| = m$
	and $\sum_m q^{\ell(\pi_m)} = (2n+2)_q$.
	The rest of the argument is as in \cref{proof:orders-of-Coxeter-groups:Sn}.

	\item
	The Coxeter group $\mathrm{D}_n$, called \term{demihypercube group},
	is the subgroup of $\mathrm{BC}_n$ generated by $s_1,\dotsc, s_n$
	and the additional generator $\tilde{s}_0 ≔ s_0s_1s_0\colon (1,2,3,\dotsc)\mapstofrom (-2,-1,3,\dotsc)$.
	It is the subgroup of signed permutations
	that flip an even number of signs.
	A permutation $w∈ \mathrm{D}_n$ has inversions
	\[
		\inv(w) ≔
		\bigl\{ (i,j) \bigm| 1\leq i<j \leq n, w(i)>w(j) \bigr\}
		\cup
		\bigl\{ (-i,j) \mid 1\leq i \lneq j \leq n, w(-i)>w(j)\bigr\}.
	\]
	Assume by induction that $\ord_q \mathrm{D}_n = (2n-2)_q!!(n)_q$.
	Consider the permutations
	\begin{align*}
			\pi_0\colon  (1,\dotsc, n+1) &\mapsto (w(1),\dotsc, w(n) , n+1),\\*
			\shortvdotswithin{\mapsto}
			\pi_{n-1}\colon  (1,\dotsc, n+1) &\mapsto (1, n+1,\dotsc, w(n)),\\
			\pi_{n}\colon  (1,\dotsc, n+1) &\mapsto (n+1, 1,2,\dotsc, w(n)),\\
			\pi'_{n}\colon  (1,\dotsc, n+1) &\mapsto (-n-1, -1,2,\dotsc, w(n)),\\*
			\shortvdotswithin{\mapsto}
			\pi_{2n}\colon  (1,\dotsc, n+1) &\mapsto (1,\dotsc, n, -n-1).
	\end{align*}
	Their set of inversions
	\[
		\inv\pi^{(\prime)}_m =
		\begin{cases}
			\bigl\{(n-m+1, n-m+2),\dotsc, (n-m+1, n+1)\bigr\} & \text{if $0\leq m\leq n$,}\\[1em]
			\begin{multlined}[b][\widthof{$\Bigl\{(n-m+1, n-m+2),\dotsc, (n-m+1, n+1)\Bigr\}$}]
				\bigl\{
						\underbrace{(1,l),\dotsc, (l-1,l)}_{\text{$l-1$ many}},
						\underbrace{(-1,l),\dotsc, (l+1,l)}_{\text{$l-1$ many}}\\[-1.4em]
						\underbrace{(-l,l+1),\dotsc, (l, n+1)}_{\text{$n-l+1$ many}}
				\bigr\}
			\end{multlined}
			& \text{if $1<l\leq n+1$ for $l≔m-n$}
		\end{cases}
	\]
	has cardinality $|\inv\pi^{(\prime)}_m|=m$
	and therefore
	\begin{align*}
		\sum_m q^{\ell(\pi^{(\prime)}_m)}
		&= 1+\dotsb+q^n+q^n+\dotsb+q^{2n}= (1+q^n)(n+1)_q.\\
	\shortintertext{%
		As before, we see that
	}
		\ord_q \mathrm{D}_{n+1}
		&= (2n-2)_q!! \underbrace{(1+q^n)(n)_q}_{(2n)_q} (n+1)_q
		= (2n)_q!! (n+1)_q.\qedhere
	\end{align*}
	\end{prooflist}
\end{proof}
\begin{lemma}
	\label{rmk:interesting-graded-ranks}
	The following $k$-algebras have the respective graded dimensions:
	\begin{alignat*}{4}
		\rk_{q,\mathbf{Z}} \Pol_n &= \rk_{q,\mathbf{Z}} \mathbf Z[y_1,\dotsc, y_n] &&= \frac{1}{(1-q)^n},\qquad &
		\rk_{q,\mathbf{Z}} \SPol_n &= \rk_{q,\mathbf{Z}} \Pol_n^{S_n} &&= \frac{1}{(n)_q! (1-q)^n},\\
		\rk_{q,\mathbf{Z}} \NC_n &= \rk_{q,\mathbf{Z}} \mathbf Z[\partial_1,\dotsc,\partial_n] &&= \frac{1}{(n)_q!}, &
		\rk_{q,\mathbf{Z}} \NH_n &= \rk_{q,\mathbf{Z}} \Pol_n \otimes_\mathbf Z\NC_n &&= \frac{1}{(n)_q!(1-q)^n},
	\end{alignat*}
	where we set $\deg\partial_i=-1$.
\end{lemma}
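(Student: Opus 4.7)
The plan is to exhibit an explicit homogeneous $\mathbf{Z}$-basis for each of the four algebras and then to compute the Poincaré series by summing $q^{\deg}$ over the basis elements. For the two cases involving the NilCoxeter/NilHecke algebra the key input is the preceding \cref{lem:poincare-polynomials-coxeter-groups}\,(i), which gives $\sum_{w \in S_n} q^{\ell(w)} = (n)_q!$.

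For $\Pol_n = \mathbf{Z}[y_1,\dotsc,y_n]$ with $\deg y_i = 1$, a $\mathbf{Z}$-basis is the set of monomials $\{y_1^{a_1}\dotsm y_n^{a_n} : a_i \in \mathbf{N}\}$, with $\deg(y_1^{a_1}\dotsm y_n^{a_n}) = a_1 + \dotsb + a_n$. Hence
\[
	\rk_{q,\mathbf{Z}}\Pol_n = \Bigl(\sum_{a\geq 0} q^a\Bigr)^{\!n} = \frac{1}{(1-q)^n}.
\]
For $\SPol_n = \Pol_n^{S_n}$, invoke the classical theorem of Newton that $\SPol_n = \mathbf{Z}[\varepsilon_1^{(n)},\dotsc,\varepsilon_n^{(n)}]$ is itself a polynomial ring on the elementary symmetric polynomials, where $\deg \varepsilon_k^{(n)} = k$. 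This gives $\rk_{q,\mathbf{Z}}\SPol_n = \prod_{k=1}^n \frac{1}{1-q^k}$. Applying the identity $1-q^k = (1-q)(k)_q$ to each factor yields $\prod_{k=1}^n (1-q^k) = (1-q)^n (n)_q!$, from which the claimed formula follows.

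For $\NC_n$, the standard fact to establish is that it admits a $\mathbf{Z}$-basis $\{\partial_w : w \in S_n\}$ indexed by permutations, where $\partial_w \mathrel{\coloneqq} \partial_{i_1}\dotsm \partial_{i_\ell}$ for any reduced expression $w = s_{i_1}\dotsm s_{i_\ell}$ of $w$. Well-definedness (independence of the chosen reduced expression) follows from the braid and commutation relations among the $\partial_i$; linear independence is most cleanly proven by verifying that these elements act linearly independently on $\Pol_n$, for instance via Schubert polynomials, while $\partial_i^2 = 0$ ensures that non-reduced products vanish and so the $\partial_w$ span. Since each generator has degree $-1$, $\deg\partial_w = -\ell(w)$, and combining with $\sum_w q^{\ell(w)} = (n)_q!$ gives $\rk_{q,\mathbf{Z}}\NC_n = \frac{1}{(n)_q!}$ in the intended convention.

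Finally, for $\NH_n$ the plan is to establish the PBW-type decomposition: multiplication induces an isomorphism $\Pol_n \otimes_\mathbf{Z} \NC_n \xrightarrow{\sim} \NH_n$ of graded abelian groups, sending $f\otimes\partial_w$ to $f\partial_w$. The spanning property is routine: using the commutation relations $\partial_i y_j - y_{s_i(j)}\partial_i \in \{-1,0,1\}$, every monomial in the generators can be rewritten with all $y$'s to the left of all $\partial$'s. Linear independence again follows from faithfulness of the polynomial representation. The graded rank then factors as the product of the ranks computed in (i) and (iii), yielding $\rk_{q,\mathbf{Z}}\NH_n = \frac{1}{(n)_q!(1-q)^n}$. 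The main obstacle in the entire proof is the PBW step for $\NC_n$ and $\NH_n$: well-definedness via braid relations is clear, but the spanning/independence argument must either be done directly (via the polynomial action, which has already been exhibited) or cited from the standard literature on the NilHecke algebra.
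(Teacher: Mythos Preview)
Your proposal is correct and follows essentially the same approach as the paper: monomial basis for $\Pol_n$, the elementary symmetric polynomials as free generators for $\SPol_n$ together with the factorisation $1-q^k=(1-q)(k)_q$, the basis $\{\partial_w\}_{w\in S_n}$ for $\NC_n$ combined with \cref{lem:poincare-polynomials-coxeter-groups}, and the tensor decomposition $\NH_n\cong\Pol_n\otimes_{\mathbf Z}\NC_n$. You supply more justification for the PBW step than the paper does (the paper simply asserts the bijection $\mathbf Z S_n\to\NC_n$ and the tensor factorisation), but the underlying argument is the same.
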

\begin{proof}
\begin{description}
	\item[$\Pol_n$] Every indeterminate $y_i$ independently generates a free abelian group
	$⟨1,y_i,y_i^2,\dotsc⟩_{\mathbf{Z}}$ of graded rank $1+q+q^2+\dots = \frac{1}{1-q}$.
	Since $\Pol_n \isom \mathbf{Z}[y]^{⊗n}$ as abelian groups,
	$\Pol_n$ has graded rank $\rk_{q,\mathbf{Z}}\Pol_n = (1-q)^{-n}$.
	\item[$\SPol_n$]
	Recall that $\SPol_n\isom \mathbf{Z}\bigl[\varepsilon^{(n)}_1,\dotsc, \varepsilon^{(n)}_n\bigr]$,
	where $\varepsilon^{(n)}_m$ denotes
	the elementary symmetric polynomials in $n$ indeterminates of degree $m$.
	Each polynomial $\varepsilon^{(n)}_m$ generates a subgroup of graded rank $\frac{1}{1-q^m}$;
	$\SPol_n$ thus has graded rank
	$\rk_{q,\mathbf{Z}} \SPol_n = \prod_{m=1}^{n} \frac{1}{1-q^m}$.
	Since $(1-q^m) = (m)_q (1-q)$ (telescoping sum),
	induction shows that
	$\rk_{q,\mathbf{Z}} \SPol_n = \frac{1}{(n)_q! (1-q)^n}$.
	\item[$\NC_n$]
	By the relations of the NilCoxeter-algebra $\NC_n$,
	there is a bijection $\mathbf Z S_n\to\NC_n, s_n\mapsto\partial_n$
	such that we can write $\partial_w≔\partial_{i_1}\dotsm\partial_{i_n}$
	for any reduced expression $w=s_{i_1}\dotsm s_{i_n}$ in $S_n$.
	In particular, $\deg\partial_w = -\ell(w)$.
	By \cref{lem:poincare-polynomials-coxeter-groups} ,
	this shows that $\rk_{q,\mathbf{Z}} \NC_n = (n)_q!$\,.
	\item[$\NH_n$] This follows from the fact that $\NH_n \isom \Pol_n ⊗_\mathbf{Z} \NC_n$ as abelian groups.
	\qedhere
\end{description}
\end{proof}
\begin{remark}
	Similar statements hold for the invariants under Coxeter groups of type $\mathrm{BC}_n$ and $\mathrm{D}_n$:
	\begin{thmlist}
		\item The hyperoctahedral group $\mathrm{BC}_n$
		acts on the polynomial ring $\Pol_n$ by permuting the indeterminates
		and flipping their signs.
		It thus has invariants
		$\Pol_n^{\mathrm{BC}_n} = \mathbf{Z}[y_1^2, \dotsc, y_n^2]^{S_n}$,
		which yields
		\begin{equation*}
			\rk_{q, \mathbf{Z}}\Pol_n^{\mathrm{B}_n} = \frac{1}{(1-q^2)(1-q^4)\dotsm(1-q^{2n})}
			=\frac{1}{(2n)_q!!(1-q)^n}.
		\end{equation*}
		\item The demihypercube group $\mathrm{D}_n$
		acts on $\Pol_n$ by permuting the indeterminates
		and flipping two signs at once.
		The invariants thus are
		\begin{align*}
			\Pol_n^{\mathrm{D}_n} &= \mathbf{Z}\bigl[\varepsilon_1(y_1^2,\dotsc,y_n^2), \dotsc,\varepsilon_{n-1}(y_1^2,\dotsc,y_n^2), \varepsilon_n(y_1, \dotsc, y_n)\bigr]\\
		\shortintertext{with graded rank}
			\rk_{q, \mathbf{Z}}(\Pol_n) &=
			\frac{1}{(1-q^2)\dotsm(1-q^{2n-2})(1-q^n)}
			= \frac{1}{(2n-2)_q!! (n)_q (1-q)^n}.
		\end{align*}
	\end{thmlist}
	We notice that $\rk_{q,\mathbf{Z}} \Pol_n/\rk_{q,\mathbf{Z}} \Pol_n^W = \ord_q W$
	for $W∈\{S_n, \mathrm{BC}_n, \mathrm{D}_n\}$.
	In fact, graded ranks satisfy $\rk_{q,\mathbf{Z}} \bigl(\Pol_n/ \Pol_n^W\bigr) = \ord_q W$
	for general finite reflection groups \autocite{Chevalley:Invariants-of-finite-groups}.
	Even stronger, $\Pol_n$ is a free $\SPol_n$-module
	of graded rank $\ord_q W$ for other finite reflection groups $W$
	\autocite[thm.\ 6.2]{Demazure:Invariants}.
	We shall show a respective statement in the Clifford setup for $W=S_n$ in \cref{prop:Clifford-Polynomials-free}.
\end{remark}
\begin{corollary}
	\label{cor:graded-rank-Clifford-polynomials}
	We endow $\PolC_{I}$ with a (polynomial) grading
	such that the odd generators $\cl_i$ are of degree zero.
	\Cref{rmk:interesting-graded-ranks} then shows that
	\begin{align*}
		\rk_{q,\Cl_{I}} \PolC_{I} &= \frac{1}{(1-q)^n},&
		\rk_{q,\Cl_{I}} \NHC_{I} &= \frac{(n)_q!}{(1-q)^n}
	\end{align*}
	both as left and right $\Cl_{I}$-modules.
\end{corollary}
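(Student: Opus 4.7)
The plan is to reduce both identities to \cref{rmk:interesting-graded-ranks} via PBW-type decompositions of $\PolC_I$ and $\NHC_I$ as $\Cl_I$-modules.

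For the first formula I would transfer \cref{lem:PolCn-is-free} to the quotient: its straightening argument---move all Clifford letters to one side and sort the commuting $y_i$'s---works verbatim after factoring out the odd generators indexed by $I_0$. This yields an isomorphism $\PolC_I\isom \Cl_I\otimes_k \Pol_n$ of left $\Cl_I$-modules and the mirrored statement on the right. Because the chosen grading puts $\deg\cl_i=0$ and $\deg y_i=1$, the graded $\Cl_I$-rank of $\PolC_I$ coincides with the graded $k$-dimension of $\Pol_n$, which by \cref{rmk:interesting-graded-ranks} is $(1-q)^{-n}$.

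For $\NHC_I$ I would follow the same recipe, using the relations in \cref{def:super-klr-algebra:nil-hecke-clifford-algebra} to straighten any word into the normal form $\cl^{\bm\beta}\,y^{\bm\alpha}\,\delC_w$, with $\bm\beta\in\{0,1\}^n$, $\bm\alpha\in\mathbf{N}^n$, and $\delC_w$ a chosen reduced expression for $w\in S_n$. Sliding a $\delC_i$ past a $y_j$ produces only terms of strictly smaller length in $w$ (plus Clifford coefficients), so an induction on $\ell(w)$ gives that these monomials span $\NHC_I$. The one non-bookkeeping step---their linear independence---I would import from the faithful polynomial representation of \cref{lem:super-klr-algebra:clifford-demazure-relations}(i) (or, equivalently, from \cref{prop:super-polynomials-are-faithful-rep} applied to the constant sequence $\nu=(i,\dotsc,i)$), where the normal-form monomials act by linearly independent operators on $\PolC_I$.

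The resulting isomorphism $\NHC_I\isom \Cl_I\otimes_k \Pol_n\otimes_k \NC_n$ of graded left $\Cl_I$-modules then yields
\[
  \rk_{q,\Cl_I}\NHC_I \;=\; \rk_{q,k}\Pol_n \cdot \rk_{q,k}\NC_n \;=\; \frac{(n)_q!}{(1-q)^n}
\]
by \cref{rmk:interesting-graded-ranks}, and the right-module case is symmetric (straighten Cliffords to the right and nilCoxeter letters to the left). The main obstacle here is the PBW step for $\NHC_I$; routing it through faithfulness of the polynomial action avoids any ad hoc manipulation of the defining relations and keeps the signs coming from the Clifford commutation rules under control.
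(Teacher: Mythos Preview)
Your proposal is correct and matches the paper's intent: the corollary carries no separate proof in the paper beyond the sentence ``\cref{rmk:interesting-graded-ranks} then shows that\ldots'', so the argument you spell out---straighten $\PolC_I$ via \cref{lem:PolCn-is-free} and $\NHC_I$ via a $\Cl_I\otimes\Pol_n\otimes\NC_n$ PBW decomposition, then read off the graded ranks from \cref{rmk:interesting-graded-ranks}---is exactly the content the paper leaves implicit. One small citation point: \cref{lem:super-klr-algebra:clifford-demazure-relations}(i) only gives the representation, not its faithfulness; the linear-independence step you need is \cref{lem:Clifford-NilHecke-acts-Faithfully} (which \cref{prop:super-polynomials-are-faithful-rep} itself defers to), but since that lemma does not use the present corollary there is no circularity.
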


\subsection{\texorpdfstring{Elementary $\delC$-symmetric polynomials}{Elementary 𝔡-symmetric polynomials}}
\label{sec:elementary-d-symmetric-polynomials}
Our aim is to construct analogues for the elementary symmetric polynomials for $\SPolC_{I}$
that coincide with the ordinary elementary symmetric polynomials
if all indices are even.
For $i$, $j$ with $\lvert i-j\rvert =1$, we define
\begin{equation*}
	\gamma_{i,j} ≔
	\begin{cases}
		\cl_i & \text{if $i$ and $j = i+1$ both have odd parity,}\\
		-\cl_i & \text{if $i$ and $j = i-1$ both have odd parity,}\\
		1 & \text{otherwise}.
	\end{cases}
\end{equation*}
\begin{lemma}
	\label{lem:Elementary-Clifford-Polynomials:two-indeterminates}
	The operator $\delC_i$ on $\PolC_I$ has the kernel
	\begin{equation}
		\label{eqn:Clifford-Symmetric:Demazure-Kernel}
		\ker\delC_i = ⟨\gamma_{i,i+1} y_i + \gamma_{i+1, i} y_{i+1},\ y_i y_{i+1},\ y_j \mid j\neq i,i+1 ⟩.
	\end{equation}
\end{lemma}
	\begin{proof}
	In the purely even case, the kernel of the Demazure operator $\partial_i$
	is the subalgebra
	\begin{equation*}
		\ker\partial_i=⟨y_i + y_{i+1}, y_iy_{i+1}, y_j \mid j\neq i,i+1⟩⊆\Pol_n.
	\end{equation*}
	In the super-setting,
	we have already seen that $\ker\delC_i⊆\PolC_{I}$ is a $\Cl_{I}$-subalgebra
	in \cref{lem:super-klr-algebra:clifford-demazure-relations}.
	Assume \wlofg\ that $i=1$ and $I=\{1, 2\}$.
	The only indeterminates are thus $y_1,y_2$ and $\cl_1$,$\cl_2$.
	Let $\Lambda≔⟨y_1y_{2},\: \gamma_{1,2}y_1 + \gamma_{2,1}y_{2}⟩\subseteq\PolC_{\{1, 2\}}$ as $\Cl_{\{1, 2\}}$-superalgebra.
	We have to show that $\Lambda = \ker\delC_{\{1, 2\}}$ as subalgebras of $\PolC_{\{1, 2\}}$.
	\begin{itemize}
	\item[($⊆$)]
		We have already shown in the proof of \cref{lemma:clifford-demazure well-defined} that $y_1y_{2} = y_2y_1 ∈\ker\delC_1$.
		From
			\begin{align*}
			&\phantom{{}={}} \delC_1\big(\delC_1(y_1)y_1 - \delC_1(y_{2})y_{2}\big)\\
			&= \delC_1 \bigl((-1-\cl_1\cl_2) y_1 - (1-\cl_1\cl_2) y_2\bigr) \\
			&= (-1+\cl_1\cl_2) \delC_1(y_1) - (-1+\cl_1\cl_2) \delC_1(y_2)\\
			&= (-1+\cl_1\cl_2) (-1-\cl_1\cl_2) - (-1+\cl_1\cl_2) (1 - \cl_1\cl_2)\\
			&= 0,
		\end{align*}
		we get that $\ker \delC_1$ also contains the polynomial%
		\footnote{%
			It seems more natural to put all Clifford-generators on the right,
			because $\delC_1$ is $\Cl_{I}$-right linear.
			We shall stick to putting them on the left though
			in order to preserve compatibility with the notation used in
			\cite{KKT:Quiver-Hecke-Superalgebras}.%
		}
		\begin{align*}
			&\eqsp \big(-\tfrac 12 (\cl_1 - \cl_{2})\big)^{\pty{1}\pty{2}}
				\left( \delC_1(y_1) y_1 - \delC_1(y_{2}) y_{2} \right)\\
			&= \big(-\tfrac 12 (\cl_1 - \cl_{2})\big)^{\pty{1}\pty{2}}
				\big( (-1-\cl_1 \cl_{2})y_1 - (1-\cl_1 \cl_{2})y_1 \big)\\
			&= \begin{cases}
				\cl_1 y_1 - \cl_{2} y_{2} & \text{if both $1$, $2$ have odd parity,}\\
				y_1 + y_{2} & \text{otherwise}
			\end{cases}\\
			&= \gamma_{1,2}y_1 + \gamma_{2,1}y_{2}.
		\end{align*}
	\item[($⊇$)]
		The superalgebra $\PolC_{\{1, 2\}}$ contains an element $\alpha_1 ≔ y_1-y_{2}$,
		which satisfies $\delC_1(\alpha_1)=-2$.
		Recall that $\PolC_{\{1, 2\}}$ is a free left and right $\Cl_{\{1, 2\}}$-module
		of graded rank $\rk_{q,\Cl_{\{1, 2\}}} \PolC_{\{1, 2\}} = \frac{1}{(1-q)^2}$.
		Its subalgebra $\Lambda$
		is also $\Cl_{\{1, 2\}}$-free of graded rank $\rk_{q,\Cl_{\{1, 2\}}}\Lambda=\frac{1}{1-q}\cdot\frac{1}{1-q^2}$.
		The $\Cl_{\{1, 2\}}$-submodule $\Lambda\alpha_1 ⊆ \PolC_{\{1, 2\}}$ does not intersect $\ker\delC_1$
		because for any $\lambda∈\Lambda$ non-zero, we have $\delC_1(\lambda\alpha_1)=-2s_1(\lambda) \neq 0$.
		It has graded rank $\rk_{q,\Cl_{\{1, 2\}}} \Lambda\alpha_1 = q\rk_{q,\Cl_{\{1, 2\}}}\Lambda$.
		Since
		\[
			\rk_{q,\Cl_{\{1, 2\}}} \Lambda + \rk_{q,\Cl_{\{1, 2\}}} \Lambda\alpha_1
			= \frac{1+q}{(1+q)(1-q^2)} = \frac{1}{(1-q)^2} = \rk_{q,\fCl_2} \PolC_{\{1, 2\}},
		\]
		we obtain that $\PolC_{\{1, 2\}}=\Lambda⊕\Lambda\alpha_s$ as $\Cl_{\{1, 2\}}$-module
		and in particular that the inclusion $\Lambda⊆\ker\delC_1$ in fact is an equality.
		\qedhere
	\end{itemize}
\end{proof}
Let $\gamma_{n,n+1} ≔ 1$ if $n+1$ exceeds the number of indeterminates.
Since
\begin{align}
	 y_i y_{i+1}
	 &= \gamma_{i+1,i+2} \, \gamma_{i,i+1}(\gamma_{i, i+1} \, \gamma_{i+1,i+2}\gamma_{i+1,i+2}\, y_i y_{i+1})\\
	 &= (-1)^{\pty{i}\pty{i+1}+\pty{i+1}\pty{i+2}}
		 (y_i y_{i+1} \, \gamma_{i+1,i+2} \, \gamma_{i+1,i+2}) \, \gamma_{i+1,i+2} \, \gamma_{i,i+1}
\end{align}
we may replace the second generator by $\gamma_{i,i+1} y_1\,\gamma_{i+1,i+2} y_{i+1}$
without changing the $\Cl_{I}$-span.
\begin{definition}
	\label{def:Elementary-Clifford-Polynomials:two-indeterminates}
	We denote the generators of $\ker\delC_i$  by
	\begin{equation*}
		\epsC^{(i-1,i+1)}_1≔\gamma_{i,i+1} y_i + \gamma_{i+1, i} y_{i+1};\qquad
		\epsC^{(i-1,i+1)}_2≔\gamma_{i,i+1} y_i\,\gamma_{i+1,i+2} y_{i+1}.
	\end{equation*}
	If $i=1$, we simply write $\epsC^{(2)}_m ≔ \epsC^{(i-1,i+1)}_m$.
	We can thus write $\ker\delC_1 = ⟨\epsC^{(2)}_1, \epsC^{(2)}_2⟩⊆\PolC_2$ as $\Cl_{\{1, 2\}}$-superalgebra.
\end{definition}
Our goal is to show that there are polynomials $\epsC^{(n)}_m$, $1\leq m\leq n$
of polynomial degree $m$
that generate $\SPolC_{I}$ of $\Cl_{I}$-superalgebras.
These are meant to serve as a Clifford-replacement
for the ordinary elementary symmetric polynomials.

Our first task is to find a recursive formula for polynomials lying in $\SPolC_{I}$.
Proving that they are indeed generators requires some more work.
We shall prove this in \cref{prop:Clifford-Polynomials-free}.

Before coming to the $n$-fold intersection $\SPolC_{I}=\bigcap_{k=1}^{n-1}\ker\delC_k$,
we consider the intersection $\ker\delC_1 \cap \ker\delC_2 ⊆ \PolC_{\{1,2,3\}}$
of just the first two kernels.
We can multiply the first generator of $\ker\delC_2$
with the unit $\gamma_{2,1}\gamma_{2,3}$ from the left
without changing its span.
Thus
\makeatletter
\preto\multline{\@fleqnfalse}
\makeatother

\begin{multline}
	\ker\delC_1 \cap \ker\delC_{2}
	=
	\Bigl\langle
		\overunderbraces{&\br{2}{\epsC^{(0,2)}_1}}%
			{&\gamma_{1,2} y_1 + &\gamma_{2,1} y_{2}& + \gamma_{2,1}\gamma_{2,3}\,\gamma_{3,2}\ y_3}%
			{&& \br{2}{\gamma_{2,1}\gamma_{2,3}\, \epsC^{(1,3)}_1}},
	\\[.5em]
	%
		\overunderbraces{&\br{2}{\gamma_{1,2}\,y_1\, \gamma_{2,1}\gamma_{2,3}\,\epsC^{(1,3)}_1}}%
			{
				& \gamma_{1,2}\,y_1 \ \gamma_{2,1}\, y_2
				+ & \gamma_{1,2}\,y_1 \ \gamma_{2,1}\gamma_{2,3}\,\gamma_{3,2}\, y_3 &
				+ \gamma_{2,1}\,y_2 \ \gamma_{2,1}\gamma_{2,3}\,\gamma_{3,2}\, y_3
			}
			{&&\br{2}{\epsC^{(0,2)}_1 \gamma_{2,1}\gamma_{2,3}\,\gamma_{3,2}\, y_3}},
	\\[.5em]
	%
	\overunderbraces{&\br{1}{\epsC^{(0,2)}_2}}%
		{&\gamma_{1,2}\,y_1\ \gamma_{2,3}\, y_2\ & \gamma_{3,4}\, y_3}%
		{}
	\Bigr\rangle
\end{multline}
as a $\Cl_{I}$-superalgebra.
This depiction serves as a template for the following:
\begin{definition}
	\label{lem:Elementary-Clifford-Polynomials:Recursively}
	The \term{elementary $\delC$-symmetric polynomials $\epsC^{(n)}_m$ of degree $m$ in $n$ indeterminates} are the polynomials in $\PolC_{I}$ defined recursively as follows:
	\begin{align}
		\label{eqn:Elementary-Clifford-Polynomials:Recursively 1}
		\epsC^{(1)}_{\mathstrut 1} &=  \gamma_{1,2} y_1\\
			& \shortvdotswithin{=}
			\epsC^{(n)}_1 &= \epsC^{(n-1)}_1
				+ (\gamma_{2,1}\gamma_{2,3}\ \gamma_{3,2}\gamma_{3,4}\dotsm \gamma_{n-1,n-2}\gamma_{n-1,n})
				\gamma_{n,n-1}\ y_n,\\
			\epsC^{(n)}_{\mathstrut 2} &=
				 \epsC^{(n-1)}_1
				 + \epsC^{(n)}_1 (\gamma_{3,2}\gamma_{3,4}\dotsm \gamma_{n-1,n-2}\gamma_{n-1,n})
				\gamma_{n,n-1}\ y_n,\\
			& \shortvdotswithin{=}
			\epsC^{(n)}_m &=
				 \epsC^{(n-1)}_{m}
				 + \epsC^{(n-1)}_{m-1} (\gamma_{m+1,m}\gamma_{m+1,m+2}\dotsm \gamma_{n-1,n-2}\gamma_{n-1,n})
				\gamma_{n,n-1}\ y_n,\\
			& \shortvdotswithin{=}
			\epsC^{(n)}_{n} &= \epsC^{(n-1)}_n\,\gamma_{n+1,n+2}\ y_n.
			\label{eqn:Elementary-Clifford-Polynomials:Recursively n}
	\end{align}
	We define the subalgebra $\Lambda_I≔⟨\epsC^{(n)}_1, \ldots, \epsC^{(n)}_n⟩ \subseteq \PolC_I$.
\end{definition}
\begin{lemma}
	$\Lambda_I\subseteq\SPolC_{I}$.
\end{lemma}
\newcommand{\epsO}{\mathit{o\varepsilon}}
\begin{table}[p]
	\caption{%
		Elementary $\delC$-symmetric polynomials $\epsC^{(n)}_m$
		as defined in \cref{lem:Elementary-Clifford-Polynomials:Recursively},
		spelt out explicitly for $m\leq n\leq 4$.
	}
	\label{tab:elementary-d-symmetric-polynomials}
		\footnotesize
		\centering
		$\begin{array}{@{}l@{}llll@{}}
			\toprule
			m\backslash n & \hfil 1 & \hfil 2 & \hfil 3 & \hfil 4\\
			\midrule
			1
			& \gamma_{1,2} y_1
			& \begin{array}{@{}>{{}}l@{}}
				\phantom{{}+{}}\gamma_{1,2}\, y_1 \cr
				+ \gamma_{2,1}\, y_2
			\end{array}
			& \begin{array}{@{}>{{}}l@{}}
				\phantom{{}+{}} \gamma_{1,2}\, y_1\cr
				+ \gamma_{2,1}\, y_2\cr
				+ \gamma_{2,1}\gamma_{2,3}\gamma_{3,2}\ y_3
			\end{array}
			& \begin{array}{@{}>{{}}l@{}}
				\phantom{{}+{}}\gamma_{1,2}\, y_1\cr
				+ \gamma_{2,1}\, y_2\cr
				+ \gamma_{2,1}\gamma_{2,3}\gamma_{3,2}\ y_3\cr
				+ \gamma_{2,1}\gamma_{2,3}\, \gamma_{3,2}\gamma_{3,4}\gamma_{4,3}\ y_4
			\end{array}
			\\\addlinespace
			2
			&& \phantom{{}+{}} \gamma_{1,2},y_1 \ \gamma_{2,3}\, y_2
			& \begin{array}{@{}>{{}}l@{}}
				\phantom{{}+{}}\gamma_{1,2}\, y_1\ \gamma_{2,3}\, y_2\cr
				+ \gamma_{1,2}\,y_1\ \gamma_{3,2}\, y_3\cr
				+ \gamma_{2,1}\,y_2\ \gamma_{3,2}\, y_3
			\end{array}
			& \begin{array}{@{}>{{}}l@{}}
				\phantom{{}+{}}\gamma_{1,2}\, y_1\ \gamma_{2,3}\, y_2\cr
				+ \gamma_{1,2}\,y_1\ \gamma_{3,2}\, y_3\cr
				+ \gamma_{2,1}\,y_2\ \gamma_{3,2}\, y_3\cr
				+ \gamma_{1,2}\,y_1\ \gamma_{3,2}\gamma_{3,4}\gamma_{4,3}\, y_4\cr
				+ \gamma_{2,1}\,y_2 \ \gamma_{3,2}\gamma_{3,4}\gamma_{4,3}\ y_4\cr
				+ \gamma_{2,1}\gamma_{2,3}\gamma_{3,2}\,y_3 \ \gamma_{3,2}\gamma_{3,4}\,\gamma_{4,3}\ y_4
			\end{array}
			\\\addlinespace
			3 &&& \phantom{{}+{}}\, \gamma_{1,2}\,y_1\,\gamma_{2,3}\,y_3\,\gamma_{3,4}\,y_3
			& \begin{array}{@{}>{{}}l@{}}
				\phantom{{}+{}}
				  \gamma_{1,2}\,y_1\ \gamma_{2,3}\, y_2\ \gamma_{3,4}\,y_3\cr
				+ \gamma_{1,2}\,y_1\ \gamma_{2,3}\, y_2\ \gamma_{4,3}\,y_4\cr
				+ \gamma_{1,2}\,y_1\ \gamma_{3,2}\, y_3\ \gamma_{4,3}\,y_4\cr
				+ \gamma_{2,1}\,y_2\ \gamma_{3,2}\, y_3\ \gamma_{4,3}\,y_4
			\end{array}
			\\\addlinespace
			4 &&&& \phantom{{}+{}}\gamma_{1,2}\,y_1\ \gamma_{2,3}\,y_2\ \gamma_{3,4}\,y_3\ \gamma_{4,5}\,y_4
			\\
			\bottomrule
		\end{array}$
\end{table}
\begin{table}[p]
	\caption{%
		Ordinary elementary symmetric polynomials $\varepsilon^{(n)}_m$ for $m\leq n\leq 4$.
		They are a specialisation of the ones listed in \cref{tab:elementary-d-symmetric-polynomials}
		at $\gamma_{i,i\pm 1}=1$ for all $i$, obtained when all $i$ have \emph{even} parity.
	}
	\label{tab:elementary-symmetric-polynomials}
	\centering
	\footnotesize
	$\begin{array}{@{}l@{}llll@{}}
			\toprule
			m\backslash n
			& \mathmakebox[\widthof{$\gamma_{1,2} y_1$}]{1}
			& \mathmakebox[\widthof{$\phantom{{}+{}} \gamma_{1,2},y_1 \ \gamma_{2,3}\, y_2$}]{2}
			& \mathmakebox[\widthof{$\phantom{{}+{}}\, \gamma_{1,2}\,y_1\,\gamma_{2,3}\,y_3\,\gamma_{3,4}\,y_3$}]{3}
			& \mathmakebox[\widthof{$\gamma_{2,1}\gamma_{2,3}\gamma_{3,2}\,y_3 \ \gamma_{3,2}\gamma_{3,4}\,\gamma_{4,3}\ y_4$}]{4}
			\\
			\midrule
			1
			& y_1
			& y_1 + y_2
			& y_1 + y_2 + y_3
			& \phantom{{}+{}} y_1 + y_2 + y_3 + y_4
			\\\addlinespace
			2
			&& y_1y_2
			& y_1y_2 + y_1y_3 + y_2y_3
			& \begin{array}{@{}>{{}}r<{{}}@{}l}
				& y_1y_2 + y_1y_3\\  + & y_1y_4 + y_2y_3\\ + & y_2y_4 + y_3y_4
			\end{array}
			\\\addlinespace
			3
			&&& y_1 y_2y_3
			& \begin{array}{@{}>{{}}r<{{}}@{}l}
				& y_1y_2y_3 + y_1y_2y_4\\ + & y_1y_3y_4 + y_2y_3y_4
			\end{array}
			\\\addlinespace
			4 &&&& \phantom{{}+{}} y_1y_2y_3y_4
			\\
			\bottomrule
	\end{array}$
\end{table}
\begin{table}[p]
	\caption{%
		Odd elementary symmetric polynomials $\epsO^{(n)}_m$ for $m\leq n\leq 4$.
		They are a specialisation of the ones listed in \cref{tab:elementary-d-symmetric-polynomials}
		at $\gamma_{i,i\pm 1}=\pm 1$ for all $i$, obtained when all $i$ have \emph{odd} parity.
	}
	\label{tab:odd-elementary-symmetric-polynomials}
	\centering
	\footnotesize
	$\begin{array}{@{}l@{}llll@{}}
			\toprule
			m\backslash n
			& \mathmakebox[\widthof{$\gamma_{1,2} y_1$}]{1}
			& \mathmakebox[\widthof{$\phantom{{}+{}} \gamma_{1,2},y_1 \ \gamma_{2,3}\, y_2$}]{2}
			& \mathmakebox[\widthof{$\phantom{{}+{}}\, \gamma_{1,2}\,y_1\,\gamma_{2,3}\,y_3\,\gamma_{3,4}\,y_3$}]{3}
			& \mathmakebox[\widthof{$\gamma_{2,1}\gamma_{2,3}\gamma_{3,2}\,y_3 \ \gamma_{3,2}\gamma_{3,4}\,\gamma_{4,3}\ y_4$}]{4}
			\\
			\midrule
			1
			& \phantom{{}+{}} x_1
			& \phantom{{}+{}} x_1 - x_2
			& \phantom{{}+{}} x_1-x_2+x_3
			& \phantom{{}+{}} x_1-x_2+x_3-x_4
			\\\addlinespace
			2
			&& \phantom{{}+{}} x_1 x_2
			& \begin{array}{@{}>{{}}l@{}}
				\phantom{{}+{}}x_1 x_2
				- x_1 x_3\cr
				+ x_2 x_3
			\end{array}
			& \begin{array}{@{}>{{}}c@{}}
				\phantom{{}+{}}x_1 x_2
				- x_1 x_3
				+ x_2 x_3\cr
				+ x_1 x_4
				- x_2 x_4
				- x_3 x_4
			\end{array}
			\\\addlinespace
			3 &&& \phantom{{}+{}} x_1 x_3 x_3
			& \begin{array}{@{}>{{}}l@{}}
				\phantom{{}+{}} x_1 x_3 x_3
				- x_1 x_2 x_3\cr
				+ x_1 x_3 x_4
				-  x_2  x_3 x_3
			\end{array}
			\\\addlinespace
			4 &&&& \phantom{{}+{}}x_1 x_3 x_3 x_4\\
			\bottomrule
	\end{array}$
\end{table}

\begin{example}
	\label{example:Elementary-Clifford-Polynomials}
	Before proving the lemma let us make the definition of the $\epsC$'s explicit
	for small numbers of indeterminates.
	\begin{thmlist}%
		\item For $n=1,\dotsc,4$ variables,
		the elementary $\delC$-symmetric polynomials are listed explicitly
		in \cref{tab:elementary-d-symmetric-polynomials}.

	\item
		If all indices are even, then all $\gamma_{i,i\pm1}=1$.
		In this case, the $\delC$-elementary symmetric polynomials
		from \cref{tab:elementary-d-symmetric-polynomials}
		specialise to those from \cref{tab:elementary-symmetric-polynomials} at $\gamma_{i,i\pm1}=1$,
		which we recognise at the ordinary elementary symmetric polynomials $\varepsilon^{(n)}_m$.
		In this case, the induction formula from \cref{lem:Elementary-Clifford-Polynomials:Recursively}
		indeed gives
		\begin{equation}
		\begin{split}
		\varepsilon^{(1)}_1 &=  x_1\\
		\varepsilon^{(n)}_m &=
		\varepsilon^{(n-1)}_{m}
		+ \varepsilon^{(n-1)}_{m-1} x_n
		\end{split}
		\end{equation}
		by setting each of the $\gamma$'s in \crefrange{eqn:Elementary-Clifford-Polynomials:Recursively 1}{eqn:Elementary-Clifford-Polynomials:Recursively n} to $1$.
		This is a well-known recursive formula for the ordinary elementary symmetric polynomials
		\begin{equation}
		\varepsilon^{(n)}_m =\sum_{\mathclap{1\leq k_{1}<\dotsb <k_{m}\leq n}} x_{k_{1}}\dotsm x_{k_m}.
		\end{equation}

	\item\label{example:Elementary-Clifford-Polynomials:odd}
		If all indices are odd, then $\gamma_{i,i\pm 1}=\pm \cl_i$.
		We set $x_i≔\gamma_{i,i+1}y_i = \cl_i y_i$
		(cf.\ \cref{lemma:quiver-Hecke-super-embedding}).
		The polynomials from \cref{tab:elementary-d-symmetric-polynomials}
		then specialise at $\gamma_{i,i\pm 1}=\pm \cl_i$
		to the ones listed in \cref{tab:elementary-symmetric-polynomials}.
		We refer to these as \term{odd elementary symmetric} polynomials
		and denote them by $\epsO^{(n)}_m$.
		The recursion formula reduces to
		\begin{equation}
			\begin{split}
			\epsO^{(1)}_1 &=  x_1\\
			\epsO^{(n)}_m &=
			\epsO^{(n-1)}_{m}
			+ (-1)^{n-m}\epsO^{(n-1)}_{m-1}\, x_n\\
			\epsO^{(n)}_n &=
			\epsO^{(n-1)}_{m-1}\, x_n
			\end{split}
		\end{equation}
		Consider also the odd elementary symmetric polynomials
		\begin{equation}
			\prescript{}{\mathrm{EKL}}{o\varepsilon}^{(n)}_m ≔ \sum_{\mathclap{1\leq k_{1}<\dotsb <k_{m}\leq n}} (-1)^{k_m-1} x_{k_{1}}\dotsm (-1)^{k_1-1} x_{k_m}
		\end{equation}
		as defined in \autocite[(2.21–2.23)]{EKL:odd-NilHecke},
		which satisfy the recursion formula
		\begin{equation*}
			\prescript{}{\mathrm{EKL}}{o\varepsilon}^{(n)}_m =
				\prescript{}{\mathrm{EKL}}{o\varepsilon}^{(n-1)}_m
				+ (-1)^{n-1}\prescript{}{\mathrm{EKL}}{o\varepsilon}^{(n-1)}_m x_n.
		\end{equation*}
		Our odd elementary symmetric polynomials
		indeed coincide with those from \autocite{EKL:odd-NilHecke}
		up to an overall sign $(-1)^{\frac{m(m-1)}{2}}$:
		\begin{align}
		\label{eqn:Odd-Elementary-Polynomial:EKL-Sign}
		\epsO^{(n)}_m &=
		\epsO^{(n-1)}_{m}
		+ (-1)^{n-m} \epsO^{(n-1)}_{m-1}\, x_n
		\notag\\
		&= 	(-1)^{\frac{m(m-1)}{2}} \cdot \prescript{}{\mathrm{EKL}}{o\varepsilon}^{(n-1)}_m
		+ (-1)^{\big(\frac{(m-1)(m-2)}{2}\big) + (n-m)}
		\cdot \prescript{}{\mathrm{EKL}}{o\varepsilon}^{(n-1)}_m\, x_n
		\notag\\
		&= (-1)^{\frac{m(m-1)}{2}}  \cdot
		\prescript{}{\mathrm{EKL}}{o\varepsilon}^{(n-1)}_m
		+ (-1)^{n-1}x_n \cdot \prescript{}{\mathrm{EKL}}{o\varepsilon}{(n-1)}_m
		\notag\\
		&=  (-1)^{\frac{m(m-1)}{2}}
		\cdot \prescript{}{\mathrm{EKL}}{o\varepsilon}^{(n)}_m.
		\end{align}
		According to our sign convention,
		the first monomial $x_1\dotsm x_m$ of the odd polynomials
		always has $+1$ as coefficient.
		Odd symmetric functions have already been treated extensively in
		\autocites{EKL:odd-NilHecke}{EK:odd-NilHecke-Hopf}{EQ:odd-NilHecke-dg}.
	\end{thmlist}
\end{example}
\begin{proof}[Proof of \cref{lem:Elementary-Clifford-Polynomials:Recursively}]
	The statement is proven by induction on $n$ and $m$.
	It is convenient to set $\epsC^{(n)}_0 = 1$ for all $n\geq 0$.
	The claim then trivially holds for $n=1$ and $m=0,1$.

	\proofsection{Induction step on $n$ for $m=1$}
	Assume that $\epsC^{(k)}_1 ∈ \delC_{1}\cap\dotsb\cap\delC_{n-1}$ for all $k\leq n$.
	We want to show that $\ker\delC_{1}\cap\dotsb\cap\delC_{n}$ contains the polynomial
	\begin{align*}
		\epsC^{(n+1)}_1
		&= \epsC^{(n)}_1 + (\gamma_{2,1}\gamma_{2,3} \dotsm \gamma_{n,n-1}\gamma_{n-1,n}) \gamma_{n+1,n}\, y_{n+1}.\\
	\intertext{%
		The first summand $\epsC^{(n)}_1$ is contained in $\ker\delC_1 \cap \dotsb \cap\ker\delC_{n-1}$
		by the induction hypothesis.
		The second summand contains no $y_{k}$ for $k\leq n$
		and thus also lies in $\ker\delC_1 \cap \dotsb \cap\ker\delC_{n-1}$
		(cf.\ the definition of $\delC$ in \cref{eq:Definition-NilHecke-Clifford-algebra}).
		In order to show that $\epsC^{(n+1)}_1 ∈ \ker \delC_{n}$,
		we expand the first summand $\epsC^{(n)}_1$ and obtain
	}
		\epsC^{(n+1)}_1 &=
			\epsC^{(n-1)}_1
			+ (\gamma_{2,1} \dotsm \gamma_{n-1,n-2}\gamma_{n-1,n}) \gamma_{n,n-1}\, y_{n}+{}
			(\gamma_{2,1} \dotsm \gamma_{n,n-1}\gamma_{n,n+1}) \gamma_{n+1,n}\, y_{n+1}.
		\\
	\shortintertext{Regrouping the terms gives}
		&= \epsC^{(n-1)}_1  +
		(
			\gamma_{2,1} \dotsm \gamma_{n,n-1}\gamma_{n-1,n}\ \gamma_{n,n-1}
		)
		\underbrace{
			( y_{n} + \gamma_{n,n+1}\gamma_{n+1,n} \ y_{n+1})
		}_{
			 \gamma_{n,n+1}\epsC^{(n-1,n+1)}_{1}
		}.
	\end{align*}
	The first summand $\epsC^{(n-1)}_1 $ is clearly contained in $\ker \delC_{n}$
	because it contains no $y_{n}$, $y_{n+1}$.
	The first expression in parentheses lies in $\Cl_{I}^*$.
	The second summand lies in the $\Cl_{I}$-superalgebra $\bigl\langle\epsC^{(n-1,n+1)}_{1}\bigr\rangle ⊆ \ker \delC_{n}$
	(see \cref{lem:Elementary-Clifford-Polynomials:two-indeterminates,def:Elementary-Clifford-Polynomials:two-indeterminates}).
	Therefore, ${\epsC^{(n+1)}_1 ∈ \ker\delC_{1}\cap\dotsb\cap\delC_{n}}$.

	\proofsection{Induction step for $m>1$}
	Assume $\epsC^{(k)}_{m} ∈ \ker\delC_1 \cap\dotsb\cap \ker\delC_{n-1}$ for all $k\leq n$ and $m\leq k$.
	We want to show that $\ker\delC_1 \cap\dotsb\cap \ker\delC_n$ contains
	\begin{equation}
		\label{eq:proof:Elementary-Clifford-Polynomials:resolve-e-n+1}
		\epsC^{(n+1)}_m = \epsC^{(n)}_m + \epsC^{(n)}_{m-1}
		(\gamma_{m+1,m}\gamma_{m+1,m+2} \dotsm \gamma_{n,n-1}\gamma_{n-1,n})\gamma_{n+1,n}\,y_{n+1}.
	\end{equation}
	\settowidth{\algnRef}{$\epsC^{(n+1)}_m\eqsp$}
	\begin{description}
	\item[$\epsC^{(n+1)}_m ∈ \ker\delC_{k}$ for $k\leq n-1$]
		By the induction hypothesis, we have $\epsC^{(n)}_m ∈ \ker\delC_{k}$.
		The second summand satisfies
		\begin{equation*}
			\begin{split}
			\mathmakebox[\algnRef][l]{
			\delC_k \bigl(
				\epsC^{(n)}_{m-1}
				\underbrace{\gamma_{m+1,m}\gamma_{m+1,m+2} \dotsm \gamma_{n,n-1}\gamma_{n-1,n}}_{*}
				\gamma_{n+1,n}y_{n+1}\bigr)
			} & \\
			&= s_i \bigl(
					\epsC^{(n)}_{m-1}\cdot
					\gamma_{m+1,m}\gamma_{m+1,m+2} \dotsm \gamma_{n,n-1}\gamma_{n-1,n}\gamma_{n+1,n}
			\bigr)\,
			\delC_k (y_{n+1})\\
			&= 0
			\end{split}
		\end{equation*}
		by the induction hypothesis on $\epsC^{(n)}_{m-1}$,
		where the expression $*$ lies in $\Cl_{I}^*$.

	\item[$\epsC^{(n+1)}_m ∈ \ker\delC_{n}$]
		For $\delC_{n}$, we expand $\epsC^{(n)}_m$ and $\epsC^{(n)}_{m-1}$
		in \cref{eq:proof:Elementary-Clifford-Polynomials:resolve-e-n+1}
		by the recursion formula and obtain
		\begin{align}
			\notag
			\epsC^{(n+1)}_m
			&=
			\setlength{\multlinegap}{0pt}
			\begin{multlined}[t][\linewidth-2\mathindent-\widthof{$\epsC^{(n+1)}_m = {}$}]
				\Bigl[
					\overbrace{
						\epsC^{(n-1)}_m
						+ \epsC^{(n-1)}_{m-1} \bigl(\gamma_{m+1,m}\dotsm \gamma_{n-1,n-2}\gamma_{n-1,n} \bigr) \gamma_{n,n-1}\,y_n
					}^{
						\epsC^{(n)}_m
					}
				\Bigr] + {}\\[1ex]
				{}+
				\Big[
				\underbrace{
					\epsC^{(n-1)}_{m-1}
					+ \epsC^{(n-1)}_{m-2}(\gamma_{m,m-1} \dotsm \gamma_{n-1,n})\gamma_{n,n-1}\, y_{n})
				}_{\epsC^{(n)}_{m-1}}
				\Big]
				\cdot(\gamma_{m+1,m} \dotsm \gamma_{n,n-1}\gamma_{n,n+1})\gamma_{n+1,n}\, y_{n+1}.
			\end{multlined}
		\intertext{We regroup both terms containing $\epsC^{(n-1)}_{m-1}$ and get}
			\label{eq:recursion of e-m-n+1 final}
			\epsC^{(n+1)}_m &=
			\begin{multlined}[t][\linewidth-2\mathindent-2cm]
				\epsC^{(n-1)}_m +
				\epsC^{(n-1)}_{m-2} \,
				\overbrace{
					(\gamma_{m,m-1} \dotsm \gamma_{n-1,n})\gamma_{n,n-1}\, y_n \,
					(\gamma_{m+1,m} \dotsm \gamma_{n,n+1})\gamma_{n+1,n-1}\, y_{n+1} \,
				}^{
					\pm (\gamma_{m,m-1} \dotsm \gamma_{n,n-1})(\gamma_{m+1,m}\dotsm \gamma_{n+1,n})
					\gamma_{n+1,n+2}\gamma_{n,n+1}\,\epsC^{(n-1,n+1)}_2
				}
				 + {}\\[2ex]
				{} +
				\epsC^{(n-1)}_{m-1}
				\underbrace{\bigl(\dotsm \gamma_{n-1,n-2}\gamma_{n-1,n}\, \gamma_{n,n-1} \bigr)}%
				_{*}
					\underbrace{\bigl(\gamma_{n,n-1} y_n +\gamma_{n+1,n}y_{n+1}	\bigr)}_{\epsC^{(n-1,n+1)}_1},
			\end{multlined}
		\end{align}
		where the expression $*$ lies in $\Cl_{I}^*$.
		The operator $\delC_n$ maps the terms $\epsC^{(n-1)}_m$, $\epsC^{(n-1)}_{m-2}$ and $\epsC^{(n-1)}_{m-1}$ to zero
		because they do not contain $y_n$ and $y_{n+1}$.
		The second and the last summand of \eqref{eq:recursion of e-m-n+1 final} are mapped to zero by $\delC_n$
		because they are contained in the $\Cl_{I}$-bimodule
		$\epsC^{(n-1)}_{m-1}\cdot⟨\epsC^{(n-1,n+1)}_2⟩ ⊆ \ker\delC_n$.
	\end{description}
	Therefore, $\epsC^{(n+1)}_m ∈ \ker\delC_n$.
	We have already seen that $\epsC^{(n+1)}_m ∈ \ker\delC_1\cap\dotsb\cap\ker\delC_{n-1}$.
	This proves the assertion.
\end{proof}
\begin{remark}
	\label{rmk:graded-rank-d-symmetric-polynomials}
	The $\Cl_{I}$-superalgebra $\Lambda_I$
	has a basis $\bigl\{\bigl(\epsC^{(n)}_1\bigr)^{\alpha_i}, \ldots, \bigl(\epsC^{(n)}_n\bigr)^{\alpha_n}\bigr\}$,
	$\alpha_i ∈ \mathbf{N}$,
	as left and as right $\Cl_{I}$-module
	(cf.\ the proof of \cref{lem:PolCn-is-free}).
	With the same argumentation as in \cref{rmk:interesting-graded-ranks},
	we see that its graded $\Cl_{I}$-rank is $\frac{1}{(n)_q! (1-q)^n}$
	both as left and right $\Cl_{I}$-module.
\end{remark}

\subsection{\texorpdfstring{Schubert Polynomials; freeness of $\PolC_n$}{Schubert Polynomials; freeness of Polℭn}}
\newcommand{\sC}{\mathfrak{s}}
\label{sec:Clifford-complete-Schubert-polynomials}
\begin{definition}
	The \term{$\delC$-Schubert polynomial} $\sC_w ∈ \PolC_{I}$
	associated to a reduced word $w∈S_n$ is
	\begin{equation}
		\sC_w ≔ \delC_{w^{-1}w_0} (y_1^{n-1}\dotsm y_{n-1}^1).
	\end{equation}
	This makes sense, since according to \cref{lem:super-klr-algebra:clifford-demazure-relations},
	the $\delC_i$ satisfy the braid relations from \eqref{eq:NilCoxeter--Clifford-relations}.
	This is the definition of the ordinary Schubert polynomials
	\autocite[§2.3]{Manivel}
	with $\delC$ instead of $∂$.
	Hence, in the purely even case, the $\delC$-Schubert polynomials coincide with the ordinary ones.
\end{definition}
We get the following lemma, which generalises from the statement \cite[173]{Fulton:Young-Tableaux} about ordinary Schubert polynomials, and from the statement \cite[(2.42–2.43)]{EKL:odd-NilHecke} about odd ones:
\begin{lemma}
	The $\delC$-Schubert polynomials have the following properties:
	\begin{thmlist}
	\item\label{lem:divided-difference-on-schubert-polynomials}
		If $wv^{-1}$ is a reduced expression, then $\delC_v\sC_w = \sC_{wv^{-1}}$.
	\item\label{lem:relations-schubert-polynomials}
		For any tuple $\bm{\alpha}\in\mathbf{N}^n$,
		we have $y^{\bm{\alpha}}\delC_v\sC_w$ if $v=w^{-1}$,
		and $y^{\bm{\alpha}}\delC_v\sC_w = 0$ if $v \neq w^{-1}$ and $\ell(v) \geq \ell(w)$.
		In particular, $\sC_e$ is non-zero.
	\end{thmlist}
\end{lemma}
\begin{proof}
	The first statement is clear from the definition of $\sC_w$.
	For the second one, we argue as follows:
	\begin{itemize}
	\item If $\ell(v)>\ell(w)$:
		Every $\delC_i$ reduces the polynomial degree by one.
		Recall that the longest element $w_0$ of the symmetric group has
		$\ell(w_0)=\frac{n(n-1)}{2}=\deg(y^{n-1}_1\dotsm y^1_{n-1})$.
		Thus, $\sC_w$ has polynomial degree $\ell(w)$.
		Since $\delC_i$ acts on constants by zero, the assertion follows.
	\item If $\ell(v)=\ell(w)$ and $v\neq w^{-1}$:
		In this case, $v(w^{-1}w_0)$ is not a reduced expression,
		which implies that $\delC_v\delC_{w^{-1}w_0}=0$.
	\item If $v=w^{-1}$:
		Take the reduced expression $w_0 = s_{n-1} \dotsm s_2 s_1 s_{n-2}\dotsm s_3 s_2 s_1 s_2 s_1$.
		We have
		\begin{align*}
			&\eqsp \delC_{w_0} (y^{n-1}_1\dotsm y^1_{n-1})\\
			&= \delC_{w_0s_1} \delC_1( y_1 \, y_2 y_1 \, y_3 y_2 y_1 \, y_4 y_3 y_2 y_1\dotsm y_{n-1} y_{n-2}\dotsm y_1)\\
			&= 
					\delC_{w_0s_1} \bigl[\delC_1(y_1) \, (y_1 \, y_2 y_1 y_3 y_2 y_1 \dotsm) +
					 y_2 \delC_{y_1 \, y_2 y_1 \, y_3 y_2 y_1 \dotsm}
					\bigr].
		\intertext{%
		The second summand vanishes because in the argument of $\delC_1$,
		the indeterminates $y_1$, $y_2$ only occur in products $y_1y_2$,
		which lie in $\ker\delC_1$ (see \cref{lem:Elementary-Clifford-Polynomials:two-indeterminates}).
		We continue with the next operators.
		It turns out that in every step, we can apply $\delC_i$ to precisely one factor $y_i$,
		since the remaining $y_i$, $y_{i+1}$'s occur pairwise. We obtain
		}
			&= \delC_{w_0s_1s_2} s_2(\delC_1 y_1) \bigl[ (\delC_2y_2) \,(y_1 \, y_3 y_2 y_1 \dotsm) + \overbrace{\delC_2(y_1 \, y_3 y_2 y_1 \dotsm)}^0\bigr]\\
			&= \delC_{w_0s_1s_2s_1} s_1s_2(\delC_1 y_1) \, s_1(\delC_2y_2) \, \bigl[\delC_1(y_1)\, y_3 y_2 y_1 \dotsm\bigr]\\
			\shortvdotswithin{=}
			&= (w_0s_1)(\delC_1 y_1) (w_0s_1s_2)(\underbrace{\delC_2 y_2}_{∈\Cl_{I}^*}) (w_0s_1s_2s_1)(\delC_1 y_1)\dotsm s_1 (\delC_2 y_2) \cdot \delC_1(y_1),
		\end{align*}
		where $\delC_1 y_1, \delC_2 y_2 \in \Cl_{I}^*$.
		Since the permutation action of $S_n$ preserves $\Cl_{I}^*$,
		we obtain that $\sC_e ∈\Cl_{I}^*$.
		\qedhere
	\end{itemize}
\end{proof}
\Needspace{3\baselineskip}
\begin{lemma}
	\label{lem:Clifford-NilHecke-acts-Faithfully}
	The action of $\NHC_{I}$ on $\PolC_{I}$ is faithful.
\end{lemma}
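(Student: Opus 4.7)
The plan is to exhibit a $k$-linear spanning set of $\NHC_I$ and then show that its elements act linearly independently on $\PolC_I$. Using the defining relations \eqref{eq:Definition-NilHecke-Clifford-algebra} and \eqref{eq:NilCoxeter--Clifford-relations}, every element of $\NHC_I$ can be rewritten as a $k$-linear combination of monomials $y^{\bm\alpha}\cl^{\bm\beta}\delC_w$ with $\bm\alpha\in\mathbf{N}^n$, $\bm\beta\in\{0,1\}^{|I_1|}$ and $w\in S_n$ (fixing once and for all a reduced expression for each $w$). Indeed, commuting $y_i$'s and $\cl_j$'s past the $\delC_i$'s produces only lower-$\delC$-length corrections, while the braid plus $\delC_i^2=0$ relations reduce any word in the $\delC_i$'s to $\delC_w$ for some $w\in S_n$ or to zero. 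Faithfulness therefore reduces to the statement: if $a=\sum c_{\bm\alpha,\bm\beta,w}\,y^{\bm\alpha}\cl^{\bm\beta}\delC_w$ acts as zero on $\PolC_I$, then every $c_{\bm\alpha,\bm\beta,w}$ vanishes.

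I argue this by induction on $M\ge 0$, the claim being that $c_{\bm\alpha,\bm\beta,w}=0$ for all $w$ of length $M$, assuming it already holds for every shorter length. For the base case $M=0$ I evaluate $a$ at $1\in\PolC_I$: since $\delC_{w'}(1)=0$ for $w'\neq e$, the identity $0=a\cdot 1=\sum c_{\bm\alpha,\bm\beta,e}\,y^{\bm\alpha}\cl^{\bm\beta}$ combined with \cref{lem:PolCn-is-free} (which gives the $k$-basis $\{y^{\bm\alpha}\cl^{\bm\beta}\}$ of $\PolC_I$) yields $c_{\bm\alpha,\bm\beta,e}=0$.

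For the inductive step I fix $w\in S_n$ of length $M\ge 1$ and evaluate $a$ on the $\delC$-Schubert polynomial $\sC_w$. By the induction hypothesis, only the terms with $\ell(w')\ge M$ can contribute; among these, \cref{lem:relations-schubert-polynomials} asserts that $\delC_{w'}\sC_w=0$ whenever $\ell(w')>M$ or $\ell(w')=M$ with $w'\neq w$, and that $\delC_w\sC_w=\sC_e$, the latter being a unit in $\Cl_I$ by the explicit computation at the end of the same lemma. Thus $0=a\cdot\sC_w=\sum_{\bm\alpha,\bm\beta} c_{\bm\alpha,\bm\beta,w}\,y^{\bm\alpha}\cl^{\bm\beta}\sC_e$. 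Sliding the unit $\sC_e\in\Cl_I^*$ past the $y^{\bm\alpha}\cl^{\bm\beta}$'s (picking up only signs by the commutation rules) and invoking the $k$-basis of \cref{lem:PolCn-is-free} once more forces $c_{\bm\alpha,\bm\beta,w}=0$ for all $\bm\alpha,\bm\beta$. Because $a$ is a finite sum, finitely many such steps finish the induction.

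The main obstacle is the triangularity packed into \cref{lem:relations-schubert-polynomials}: the proof relies crucially on the twin vanishings $\delC_{w'}\sC_w=0$ for $\ell(w')>\ell(w)$ and for $\ell(w')=\ell(w)$ with $w'\neq w$, together with the non-vanishing $\delC_w\sC_w=\sC_e\in\Cl_I^*$. Once these three facts are in hand the faithfulness statement collapses to free-module bookkeeping over $\Cl_I$; without them, the diagonal contribution $\sum c_{\bm\alpha,\bm\beta,w}\,y^{\bm\alpha}\cl^{\bm\beta}\sC_e$ would be entangled with contributions from $w'$ of smaller length (which in general share the same total polynomial degree), and a substantially more delicate leading-monomial analysis of the $\delC$-Schubert polynomials would be required.
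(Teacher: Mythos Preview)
Your proof is correct and follows essentially the same strategy as the paper's: exhibit the spanning set $\{y^{\bm\alpha}\cl^{\bm\beta}\delC_w\}$, then use the triangularity of the action on $\delC$-Schubert polynomials (\cref{lem:relations-schubert-polynomials}) together with an induction on $\ell(w)$ to conclude linear independence. One small imprecision: the unit $\sC_e\in\Cl_I^*$ is a product of factors of the form $-1-\cl_a\cl_b$, which do \emph{not} simply pick up a sign when slid past an individual $\cl_j$ with $j\in\{a,b\}$; the clean way to finish is to observe that right multiplication by $\sC_e$ is a $k$-linear bijection on $\PolC_I$, so $\sum c_{\bm\alpha,\bm\beta,w}\,y^{\bm\alpha}\cl^{\bm\beta}\sC_e=0$ immediately gives $\sum c_{\bm\alpha,\bm\beta,w}\,y^{\bm\alpha}\cl^{\bm\beta}=0$.
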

\begin{proof}
	We recall briefly the proof of \autocite[prop.\ 2.11]{EKL:odd-NilHecke}
	which can be applied nearly literally.
	By the defining relations,
	it is clear that $\{y^{\bm{\alpha}}\delC_v\}_{\bm{\alpha}, v∈S_n}$
	is a generating set of  $\NHC_{I}$ as $\Cl_{I}$-superalgebra.
	We show by induction on $\ell(v)$ that
	$y^{\bm{\alpha}}\delC_v \sC_w$ are linearly independent elements of the $\Cl_{I}$-bimodule $\PolC_{I}$.
	This proves that the generating set $\{y^{\bm{\alpha}}\delC_v\}$ acts linearly independently in $\PolC_{I}$.

	\proofsection{Induction base}
	The only element of length $1$ is $e$.
	According to \cref{lem:relations-schubert-polynomials}, we obtain that
	$y^{\bm{\alpha}}\delC_v \sC_e \neq 0$ only if $v=e$.
	In this case, $y^{\bm{\alpha}}\delC_v ∈ \Cl_{I} y^{\bm{\alpha}}$.
	Thus, $y^{\bm{\alpha}}\delC_e$ cannot be a linear combination of $\{y^{\bm{\alpha'}}\delC_v\}_{v>e}$.

	\proofsection{Induction step}
	Assume that $\{y^{\bm{\alpha'}}\delC_{v'}\}_{v'<v}$ is linearly independent
	for a fixed $v∈S_n$.
	Assume $y^{\bm{\alpha}}\delC_{v}$ were a linear combination of $\{y^{\bm{\alpha'}}\delC_{v'}\}_{\ell(v')<\ell(v)}$.
	But $y^{\bm{\alpha}}\delC_{v} \sC_{v^{-1}} ∈ \Cl_{I}^* y^{\bm{\alpha}}$
	by the first case in \cref{lem:relations-schubert-polynomials},
	whereas a linear combination of $\{y^{\bm{\alpha'}}\delC_{v'}\}$ maps $\sC_{v^{-1}}$ either to zero
	or to a polynomial of degree strictly larger than $0$.
	This is a contradiction;
	there can thus be no linear relations between $y^{\bm{\alpha}}\delC_{v}$
	and elements $y^{\bm{\alpha'}}\delC_{v'}$ for $\ell(v')<\ell(v)$.
	Now, assume $y^{\bm{\alpha}}\delC_{v}$ were a non-trivial linear combination
	of $\{y^{\bm{\alpha'}}\delC_{v'}\}_{\ell(v')\geq\ell(v)}$.
	According to the second two cases in \cref{lem:relations-schubert-polynomials}, this is also a contradiction.
\end{proof}

Using Schubert polynomials, we may prove the following generalisation from \cite[prop.~2.13, cor.~2.14, lem.~2.16]{EKL:odd-NilHecke}.
Recall that $\PolC_{I}$ and $\Lambda_{I}$ of $\Pol_{I}$ and $\SPol_{I}$
have graded ranks $\rk_{q,\Cl_{I}} \Lambda_{I} = (n)_q! (1-q)^{-n}$
and $\rk_{q,\Cl_{I}} \PolC_{I} = (1-q)^{-n}$, respectively;
see \cref{rmk:graded-rank-d-symmetric-polynomials,cor:graded-rank-Clifford-polynomials}.
\begin{theorem}
	\label{prop:Clifford-Polynomials-free}
	\leavevmode
	\begin{thmlist}
		\item\label{lem:Clifford-Polynomials-free-over-symmetric}
			$\PolC_{I}$ is a free $\Lambda_{I}$-module of graded rank $(n)_q!$.
		\item\label{lem:Clifford-Symmetric-polynomials-are-kernel-of-Demazure}
			 The inclusion $\Lambda_{I} ⊆ \SPolC_{I}$
			from \cref{lem:Elementary-Clifford-Polynomials:Recursively} is in fact an equality.
		\item The faithful action of $\NHC_{I}$ on $\PolC_{I}$
		gives an isomorphism $\NHC_{I}\xrightarrow{\isom}\End_{\SPolC_{I}}(\PolC_{I})$.
	\end{thmlist}
\end{theorem}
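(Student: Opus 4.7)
The plan is to prove the three parts in order, with (ii) and (iii) both leveraging the Schubert‐polynomial basis established in (i). The key technical fact I will use is that every $\delC_i$ is right $\Lambda_I$-linear: by the $s_i$-derivation rule $\delC_i(f\cdot a)=\delC_i(f)\cdot a+s_i(f)\cdot\delC_i(a)$, which reduces to $\delC_i(f)\cdot a$ whenever $a\in\Lambda_I\subseteq\bigcap_k\ker\delC_k$. Since the left multiplications by $y_i$ and $\cl_i$ trivially commute with right multiplication, every generator of $\NHC_I$ acts as a right $\Lambda_I$-linear endomorphism of $\PolC_I$.

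For (i), I will show that $\{\sC_w\}_{w\in S_n}$ forms a free basis of $\PolC_I$ as a right $\Lambda_I$-module. Linear independence follows by triangularity: given $\sum_w\sC_w\cdot a_w=0$ with $a_w\in\Lambda_I$, apply $\delC_v$ for $v\in S_n$ in decreasing order of length. Starting with $v=w_0$, \cref{lem:relations-schubert-polynomials} annihilates every $\sC_w$ with $\ell(w)<\ell(w_0)$ and reduces the equation to $(\delC_{w_0}\sC_{w_0})\cdot a_{w_0}=u\cdot a_{w_0}=0$ with $u\in\Cl_I^*$, forcing $a_{w_0}=0$; iterating in strictly decreasing length successively eliminates each $a_{v^{-1}}$. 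Spanning then follows from a graded rank count:
\[
	\sum_{w\in S_n}q^{\ell(w)}\cdot\rk_{q,\Cl_I}\Lambda_I
	=(n)_q!\cdot\tfrac{1}{(n)_q!(1-q)^n}
	=\tfrac{1}{(1-q)^n}
	=\rk_{q,\Cl_I}\PolC_I,
\]
by \cref{rmk:graded-rank-d-symmetric-polynomials,cor:graded-rank-Clifford-polynomials}, so the free submodule generated by the Schubert polynomials fills $\PolC_I$.

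For (ii), I take $f\in\SPolC_I$ and expand $f=\sum_w\sC_w\cdot a_w$ with $a_w\in\Lambda_I$ by (i). Right $\Lambda_I$-linearity of $\delC_i$ combined with the divided-difference rule $\delC_i\sC_w=\sC_{ws_i}$ when $s_i$ is a right descent of $w$ and zero otherwise yields $0=\delC_i f=\sum_{w\colon\ell(ws_i)<\ell(w)}\sC_{ws_i}\cdot a_w$. Linear independence of the Schuberts forces $a_w=0$ whenever $w$ admits some right descent, \ie\ whenever $w\neq e$. Hence $f=\sC_e\cdot a_e\in\Cl_I^*\cdot\Lambda_I\subseteq\Lambda_I$.

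For (iii), the preceding observations deliver a ring map $\NHC_I\to\End_{\SPolC_I}(\PolC_I)$; it is injective by \cref{lem:Clifford-NilHecke-acts-Faithfully}, and I expect surjectivity to follow from a graded $\Cl_I$-rank comparison, using that $\PolC_I$ is free of graded rank $(n)_q!$ over $\SPolC_I=\Lambda_I$ (by parts (i) and (ii)) together with \cref{cor:graded-rank-Clifford-polynomials}. The main obstacle I foresee is the triangularity bookkeeping in (i): verifying that each diagonal entry $\delC_{w^{-1}}\sC_w$ really is a unit in $\Cl_I$, which depends on sequencing the compositions of $\delC_i$'s correctly and tracking Clifford signs through the normalising scalars implicit in \cref{lem:Elementary-Clifford-Polynomials:Recursively}.
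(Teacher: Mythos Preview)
Your proposal is correct and follows essentially the same strategy as the paper: establish the Schubert polynomials $\{\sC_w\}$ as a $\Lambda_I$-basis of $\PolC_I$ via a triangularity argument plus a graded rank count, deduce (ii) by expanding an arbitrary $f\in\SPolC_I$ in this basis and applying each $\delC_i$, and conclude (iii) by combining faithfulness with a rank comparison. The only cosmetic difference is that you work with right $\Lambda_I$-modules whereas the paper phrases (i) as injectivity of the multiplication $\SPolC_I\otimes_{\Cl_I}\mathfrak H_I\to\PolC_I$; your concern about the diagonal entries $\delC_{w^{-1}}\sC_w$ being units is already settled by \cref{lem:relations-schubert-polynomials} and needs no further Clifford-sign bookkeeping.
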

\begin{proof}
	Since Schubert polynomials behave completely analogously to the classical case
	(apart from $\Cl_{I}$),
	the classical proof applies.
	Nevertheless, we briefly explain the argument.
	For details, consider \autocite[§3.2]{Lauda:Categorification-Quantum-sl2} for the even
	and \autocite[prop.\ 2.15]{EKL:odd-NilHecke} for the odd case.
	\begin{prooflist}
		\item\label{proof:Clifford-Polynomials-free-over-symmetric}
		Define the $\Cl_{I}$-bimodule
		\(\mathfrak{H}_{I} ≔ ⟨y_1^{a_1} \dotsm y_{n-1}^{a_{n-1}} \mid a_i \leq n-i⟩_{\Cl_{I}}\).
		This module has a basis given by the Schubert polynomials $\sC_w$;
		see the proof of \autocite[prop.\ 2.12]{EKL:odd-NilHecke}.
		For every index $i$, one may chose an exponent $0\leq a_i \leq n-i$,
		which yields a generator $y_i^{a_i}$ of (polynomial) degree $a_i$.
		The submodule generated by $y_i^\bullet$ thus has graded rank $(n-i)_q$;
		therefore, $\rk_{q,\Cl_{I}}(\mathfrak{H}_n) = (n)_q!$.
		The same proof as in the even (or odd) case shows that the multiplication
		\[
		\SPolC_{I} ⊗_{\Cl_{I}} \mathfrak{H}_{I} \to \PolC_{I}
		\]
		is injective.
		As both sides have the same graded $\Cl_{I}$-rank $(1-q)^{-n}$
		(see \cref{rmk:interesting-graded-ranks,rmk:graded-rank-d-symmetric-polynomials}),
		it is an isomorphism of $\Cl_{I}$-modules.
		This exhibits $\PolC_{I}$ as a free $\SPolC_{I}$-module of graded rank $(n)_q!$.

	\item In \ref{proof:Clifford-Polynomials-free-over-symmetric}, we have shown
		that $\PolC_{I}$ has a $\Lambda_{I}$-basis by Schubert polynomials $\sC_w$.
		Given a polynomial $p∈\PolC_{I}$,
		we take a linear combination
		$\sum_{w∈S_n} q_w \sC_w$ for $q_w ∈ \Lambda_{I}$
		with $q_w$ non-zero for some $w>e$.
		Assume the image
		\[
			\delC_i(p)=\delC_i\Bigl(\sum_w \delC_i(q_w)\sC_w\Bigr)
			= \sum_w s_i(q_w) \sC_{s_iw}
		\]
		were zero.
		By assumption, there was some non-zero $q_w$ for $w>e$, so
		there is still a non-zero term $s_i(q_w) \sC_{s_iw}$ in $\delC_i(p)$.
		However, since $\delC_i \sC_v = \sC_{s_iv}$
		if $s_iv$ is a reduced expression by \cref{lem:divided-difference-on-schubert-polynomials},
		$\delC_i$ maps no other Schubert polynomial to $\sC_{s_iv}$.
		Therefore, $\delC_i(p)$ cannot be zero.
		There are thus no $\Lambda_{I}$-linear combinations of Schubert polynomials in $\PolC_{I}$
		not already contained in $\ker \delC_i$.
		This proves the statement.

		\item We have already seen in \cref{lem:Clifford-NilHecke-acts-Faithfully}
		that $\NHC_n$ acts faithfully on the free $\Cl_{I}$-module $\PolC_{I}$.
		This action is compatible with the $\SPolC_{I}$-module structure
		from \ref{proof:Clifford-Polynomials-free-over-symmetric}:
		as $\NHC_{I} = \NCC_{I} ⊗_{\Cl_{I}} \PolC_{I}$,
		we see that $\PolC_{I}$ acts on itself,
		$\NCC_{I}$ acts trivially on $\SPolC_{I}$ by the very definition of the latter
		and that $\NCC_{I}$ indeed acts on $\mathfrak{H}_n$ because it acts by decreasing exponents.
		A graded rank computation
		\begin{equation*}
			\rk_{q,\Cl_{I}} \End_{\SPolC_{I}}(\PolC_{I})
			= \rk_{q,\Cl_{I}}(\SPolC_{I}) \cdot \rk_{q,\SPolC_{I}}(\PolC_{I})^2 = \frac{(n)_q!^2}{(n)_q! (1-q)^n}
			= \rk_{q,\Cl_{I}} \NHC_{I}
		\end{equation*}
		shows the asserted isomorphism.\qedhere
	\end{prooflist}
\end{proof}

\subsection{Complete symmetric polynomials}
\label{sec:Clifford-complete-symmetric-polynomials}
\newcommand{\hC}{h}
We give another collection of generators for $\SPolC_{I}$,
namely a Clifford-analogue for the complete homogeneous symmetric polynomials.
\begin{definition}
	\label{def:clifford-complete-symmetric-polynomial}
	Let $M_{(n)}$ be the $n\times n$-matrix
	\begin{align}
		M_{(n)}&≔\begin{psmallmatrix}
			\epsC^{(n)}_1 & \tilde{\kappa}_{1,2}\\
			- \epsC^{(n)}_2 && \tilde{\kappa}_{1,3}\\
			\vdots &&& \ddots\\
			(-1)^{n-2} \epsC^{(n)}_{n-1} &&&& \tilde{\kappa}_{1,n}\\
			(-1)^{n-1} \epsC^{(n)}_{n} &&&& 0
		\end{psmallmatrix}\\
	\shortintertext{where for $k<l-1$ we set}
		\label{eq:definition of kappas}
		\kappa_{k,l}
		&≔\gamma_{k+1,k} \, \gamma_{k+1,k+2}  \, \gamma_{k+2,k+1}  \, \gamma_{k+2,k+3}
			\dotsm\gamma_{l-2,l-3} \, \gamma_{l-2,l-1}\,\gamma_{l-1,l-2} \, \gamma_{l-1,l}\,\gamma_{l,l-1}\\\notag
		\kappa_{l-1,l}&≔\gamma_{l,l-1},\\\notag
		 \kappa_{l,l}&≔\gamma_{l,l+1}.\\
	\intertext{Furthermore, let}
			\notag
		\tilde{\kappa}_{k,l}
		&≔\kappa_{k,l}\kappa_{l,l} = \gamma_{k+1,k} \, \gamma_{k+1,k+2}\dotsm \gamma_{l-1,l-2} \, \gamma_{l-1,l} \, \gamma_{l,l-1} \, \gamma_{l,l+1}.
		\intertext{Note that we can “split” the $\kappa$'s by}
		\notag
		\kappa_{k,l} &\within{≔}{=}
		\underbrace{\gamma_{k+1,k} \, \gamma_{k+1,k+2}\dotsm\gamma_{m-1,m-2} \, \gamma_{m-1,m}\,\gamma_{m,m-1}}_{\kappa_{k,m}}
		\gamma_{m,m+1}
		\underbrace{\gamma_{m+1,m} \, \gamma_{m+1,m+2}\dotsm\gamma_{l-1,l-2} \, \gamma_{l-1,l}\,\gamma_{l,l-1}}_{\kappa_{m,l}}\\
		\label{eq:splitting-kappa-factors}
		&\within{≔}{=}  \tilde{\kappa}_{k,m}\kappa_{m,l}.
	\end{align}
	The \term{complete symmetric polynomial} $\hC^{(n)}_m$
	of (polynomial) degree $m$ on $n$ indeterminates
	is the top left entry of the matrix power $M_{(n)}^m$.
	In particular, $\hC^{(n)}_1=\epsC^{(n)}_1$ and $\hC^{(n)}_{0}=\epsC^{(n)}_0=1$.
\end{definition}
\begin{remark}
	The top row of $M_{(n)}^m$ has entries
	\[
		\bigl(M_{(n)}^m\bigr)_{1,\bullet}
		=
		\bigl(\hC^{(n)}_m,\: \hC^{(n)}_{m-1}\tilde{\kappa}_{1,2},\: \hC^{(n)}_{m-2}\tilde{\kappa}_{1,2}\tilde{\kappa}_{1,3},\dotsc,
		\tilde{\kappa}_{1,2}\dotsm\tilde{\kappa}_{1,m+1}, 0,\dotsc, 0\bigr).
	\]
\end{remark}
\begin{example}
	\begin{thmlist}
		\item If all indices are even and thus $\epsC^{(n)}_m = \varepsilon^{(n)}_m$ and $\kappa_{k,l}=1$ for all $k,l$,
			these polynomials coincide with the ordinary complete symmetric polynomials
			\[
				h^{(n)}_m = \sum_{\mathclap{1\leq k_{1}\leq\dotsb \leq k_{m}\leq n}} x_{k_{1}}\dotsm x_{k_m}.
			\]
		\item If all indices are odd,
		the resulting complete homogeneous symmetric polynomials coincide
		with those of \autocite{EKL:odd-NilHecke}
		up to a renormalisation involving the $\kappa$'s.
	\end{thmlist}
\end{example}
\begin{proposition}
	\label{prop:Complete-Clifford-Polynomials:Vanishing-Relation}
	We have
	\begin{equation*}
	\sum_{l=0}^{m} (-1)^l\,\hC^{(n)}_{m-l}\,\tilde{\kappa}_{1,2}\dotsm\tilde{\kappa}_{1,l}\epsC^{(n)}_l
	=
	\sum_{l=0}^{m} (-1)^l\,\tilde{\kappa}_{1,2}\dotsm\tilde{\kappa}_{1,l}\epsC^{(n)}_l \,\hC^{(n)}_{n-l}
	= \begin{cases}
		1 & \text{if $m = 0$,}\\ 0 & \text{otherwise.}
	\end{cases}
	\end{equation*}
\end{proposition}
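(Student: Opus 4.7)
The plan is to leverage the matrix characterisation $\hC^{(n)}_m=(M_{(n)}^m)_{1,1}$ by computing this entry in two ways, using the associativity $M_{(n)}^m=M_{(n)}^{m-1}\cdot M_{(n)}=M_{(n)}\cdot M_{(n)}^{m-1}$. Since $\PolC_I$ is non-commutative, the two evaluations give genuinely distinct recursions --- this is precisely why the proposition states two identities rather than one. A preliminary step is to verify the Remark by induction on $m$, using the simple relation $(M_{(n)}^m)_{1,k}=(M_{(n)}^{m-1})_{1,k-1}\tilde{\kappa}_{1,k}$ for $k\geq 2$, which is immediate from the sparse column structure of $M_{(n)}$ (only column $1$ and the superdiagonal are non-zero).

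For the first identity, I will exploit that the only non-zero entries of $M_{(n)}$ sit on the first column (values $(-1)^{j-1}\epsC^{(n)}_j$) and on the superdiagonal (values $\tilde{\kappa}_{1,j+1}$). Expanding
\[\hC^{(n)}_m=(M_{(n)}^{m-1}M_{(n)})_{1,1}=\sum_{l=1}^n(M_{(n)}^{m-1})_{1,l}\cdot(-1)^{l-1}\epsC^{(n)}_l\]
and substituting the Remark's formula $(M_{(n)}^{m-1})_{1,l}=\hC^{(n)}_{m-l}\tilde{\kappa}_{1,2}\dotsm\tilde{\kappa}_{1,l}$ immediately yields the first identity, after moving $\hC^{(n)}_m$ to the other side and picking up the $l=0$ term as $\hC^{(n)}_m\epsC^{(n)}_0=\hC^{(n)}_m$. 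For the second identity I would instead expand via the opposite associativity. The $k$-th row of $M_{(n)}$ is supported on columns $1$ and $k+1$ (just column $1$ when $k=n$), so one obtains the first-column recursion
\[(M_{(n)}^m)_{k,1}=(-1)^{k-1}\epsC^{(n)}_k\,\hC^{(n)}_{m-1}+\tilde{\kappa}_{1,k+1}(M_{(n)}^{m-1})_{k+1,1}\]
for $k<n$, with boundary $(M_{(n)}^m)_{n,1}=(-1)^{n-1}\epsC^{(n)}_n\hC^{(n)}_{m-1}$. Starting from $(M_{(n)}^m)_{1,1}=\hC^{(n)}_m$ and iterating this, one telescopes to
\[\hC^{(n)}_m=\sum_{l=1}^{\min(m,n)}(-1)^{l-1}\tilde{\kappa}_{1,2}\dotsm\tilde{\kappa}_{1,l}\,\epsC^{(n)}_l\,\hC^{(n)}_{m-l},\]
which rearranges to the second identity.

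The main obstacle I anticipate lies in the bookkeeping for this last telescoping step: each iteration simultaneously decreases the row index by one and the superscript by one, and the recursion halts either when the row index hits $n$ (invoking the boundary value for $(M_{(n)}^m)_{n,1}$) or when the superscript reaches $0$ (via $(M_{(n)}^0)_{j,1}=\delta_{j,1}$). I will need to check that both termination modes combine to cut the sum off at exactly $l=\min(m,n)$, matching the range in the proposition once $\epsC^{(n)}_l$ is interpreted as zero for $l>n$. The case $m=0$ is trivial in both directions, since the sums collapse to $\hC^{(n)}_0\epsC^{(n)}_0=1$. Once the edge cases are handled, both identities fall out of the same matrix-multiplication principle applied with the two different associations.
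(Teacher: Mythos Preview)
Your proposal is correct and follows essentially the same approach as the paper: the first identity comes from expanding $(M_{(n)}^{m-1}M_{(n)})_{1,1}$ using the Remark on the top row, and the second from expanding $(M_{(n)}M_{(n)}^{m-1})_{1,1}$ via the first-column recursion and telescoping, exactly as the paper does (it names the intermediate entries $\hC^{(n)}_{(k),m}$ rather than $(M_{(n)}^{m-k})_{k+1,1}$, but the recursion is identical). One minor slip: in your telescoping paragraph the row index \emph{increases} by one at each step (not decreases), consistent with your own termination criterion ``row index hits $n$''.
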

\begin{proof}
	The complete homogeneous symmetric polynomial $\hC^{(n)}_{m}$
	is the top left entry of the matrix power $M_{(n)}^m=M_{(n)}^{m-1}\cdot M_{(n)}$
	and therefore equals
	\begin{equation*}
		\hC^{(n)}_{m}=
		\left(
			\hC^{(n)}_{m-1},\: \hC^{(n)}_{m-2}\tilde{\kappa}_{1,2},\dotsc,
			\tilde{\kappa}_{1,2}\dotsm\tilde{\kappa}_{1,m+1}, 0,\dotsc, 0
		\right)
		\cdot
		\left(
			\epsC^{(n)}_1,
			-\epsC^{(n)}_2,
			\dotsc,
			(-1)^{n}\epsC^{(n)}_n
		\right)^T.
	\end{equation*}
	We obtain the recursive formula
	\[
		\hC^{(n)}_{m} = \sum_{l=1}^{m} (-1)^{l-1} \hC^{(n)}_{m-l} \tilde{\kappa}_{1,2}\dotsm\tilde{\kappa}_{1,l} \epsC^{(n)}_l
	\]
	for $m\geq 1$.
	Moving $\hC^{\smash{(n)}}_{m}$ to the right side proves the statement.
	For the second equality, we denote the remaining entries of $M_{(n)}^m$ by
	\begin{equation*}
		\begin{psmallmatrix}
			\hC^{(n)}_m & \hC^{(n)}_{m-1}\tilde{\kappa}_{1,2} & \hC^{(n)}_{m-2}\tilde{\kappa}_{1,2}\tilde{\kappa}_{1,3} & \cdots\\
			\hC^{(n)}_{(1),m+1} & \hC^{(n)}_{(1),m}\tilde{\kappa}_{1,2} & \ddots\\
			\hC^{(n)}_{(2),m+2} & \ddots
		\end{psmallmatrix}
		≔M_{(n)}^m,
	\end{equation*}
	such that every $\hC^{(n)}_{(k),l}$ is a polynomial of degree $l$.
	The polynomial $\hC^{(n)}_{m}$ is the top left entry of
	${M_{(n)}^m=M_{(n)}\cdot M_{(n)}^{m-1}}$
	and is thus given by
	\begin{align*}
	\hC^{(n)}_{m} &= \epsC_1 \hC^{(n)}_{m-1} + \tilde{\kappa}_{1,2}\hC^{(n)}_{(1),m}\\
	\hC^{(n)}_{(1),m} &= -\epsC_2 \hC^{(n)}_{m-2} + \tilde{\kappa}_{1,3}\hC^{(n)}_{(2),m}\\
	\shortvdotswithin{=}
	\hC^{(n)}_{(k-1),m} &= (-1)^k\epsC_k \hC^{(n)}_{m-k} + \tilde{\kappa}_{1,k+1}\hC^{(n)}_{(k),m}.
	\end{align*}
	We obtain a recursion formula
	\begin{align*}
		\hC^{(n)}_{m} &= \epsC^{(n)}_1 \hC^{(n)}_{m-1} + \tilde{\kappa}_{1,2}
		\left(
			-\epsC^{(n)}_2 \hC^{(n)}_{m-2} + \tilde{\kappa}_{1,3}\bigl(\cdots(
				\pm\epsC^{(n)}_{m-1} \hC^{(n)}_{1} \mp \tilde{\kappa}_{1,m}\epsC^{(n)}_{m}
			)\cdots\bigr)
		\right)\\
		&= \sum_{l=1}^m (-1)^{l-1} \tilde{\kappa}_{1,2}\dotsm \tilde{\kappa}_{1,l} \epsC^{(n)}_{l} \, \hC^{(n)}_{m-l}.
	\end{align*}
	Again, moving $\hC^{(n)}_{m}$ to the right side proves the claim.
\end{proof}

\subsection{Interlude: cohomology of Grassmann and partial flag varieties}
We recall some classical (non-super) theory on the cohomology rings of Grassmann
and partial flag varieties.
Our aim is to build super-analogues for these rings.
\begin{definition}
	Fix natural numbers $k\leq n$.
	The set of all $k$-dimensional subspaces
	$F⊆k^n$ of the $n$-dimensional $k$-vector space
	forms an algebraic variety, called \term{Grassmann variety} and denoted by $\Gr(k,n)$.
	This is a special case of the following construction:
\end{definition}
\begin{definition}
	Fix natural numbers $\ell\leq n$.
	A \term{partial flag} $F_\bullet$ of length $\ell$ is a chain $F_\bullet$ of $k$-vector subspaces
	$0 = F_0 ⊆ F_1 ⊆ \dotsb ⊆ F_\ell = k^n$.
	The set of all partial flags of length $\ell$ is the
	\term{(partial) flag variety} $\Fl(\ell; n)$.
	If $\ell=n$, then  $\Fl(n)≔\Fl(n; n)$ is called the \term{full flag variety}.
	To a flag $F_\bullet$ of length $\ell$, one associates its \term{dimension vector}
	$\bm{k}=(\dim_k F_0, \dotsc, \dim_k F_\ell)$.
	The connected components of $\Fl(\ell; n)$
	are the varieties $\Fl(\bm{k})$ of partial flags with a common dimension vector $\bm{k}$.
	In particular, $\Gr(k,n)=\Fl(\bm{k})$ with $\bm{k}=(0, k, n)$.
\end{definition}
From now on, we assume that $k = \mathbf{R}$ or $k = \mathbf{C}$.
In this case, the flag varieties are smooth manifolds.
In the following, cohomology $H^*$ always means singular cohomology with coefficients in the field $k$.
\begin{proposition}[{\autocites[prop.\ 3.6.15, rmk.\ 3.6.16]{Manivel}[161]{Fulton:Young-Tableaux}}]
	\label{prop:Flag-variety-cohomology}
	The full flag variety $\Fl(n)$
	has cohomology ring $H^*(\Fl(n))\isom\Pol_n/(\SPol_n)_+$.
	For the dimension vector $\bm{k}$ with $k_0=0$, $k_\ell=n$,
	the partial flag variety $\Fl(\bm{k})$ has the cohomology ring
	\begin{equation}
		H_{\bm{k}}≔H^*(\Fl(\bm{k}))\isom \frac{\SPol_{k_1-k_0}⊗_k\dotsb⊗_k\SPol_{k_\ell-k_{\ell-1}}}{(\SPol_n)_+}.
	\end{equation}
	We denote the nominator by $\SPol_{\bm{k}}$.
\end{proposition}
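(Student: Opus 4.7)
The proof I would follow is the classical one due to Borel, and I sketch it here for completeness.

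First, for the full flag variety $\Fl(n)$: equip it with its tautological filtration $0 = V_0 \subset V_1 \subset \cdots \subset V_n = \underline{k}^n$, where $V_i$ has rank $i$, and set $L_i ≔ V_i / V_{i-1}$, $y_i ≔ c_1(L_i) \in H^2\Fl(n)$. Sending $x_i \mapsto y_i$ gives a ring map $\Phi : \Pol_n \to H^*\Fl(n)$. Because the associated graded of the tautological filtration is the trivial bundle in $K$-theory, the Whitney sum formula yields $\prod_i (1 + y_i t) = 1$, so $\varepsilon_j^{(n)}(y_1,\dotsc,y_n) = 0$ for every $j \geq 1$ and $\Phi$ descends to $\bar\Phi : \Pol_n / (\SPol_n)_+ \to H^*\Fl(n)$.

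To show $\bar\Phi$ is an isomorphism, I would compare graded ranks. A Schubert cell decomposition of $\Fl(n)$ is indexed by $S_n$ with $C_w$ of real dimension $2\ell(w)$, so $H^*\Fl(n)$ has graded rank $\ord_q(S_n) = (n)_q!$ by \cref{lem:poincare-polynomials-coxeter-groups}. By the classical Chevalley theorem (which is the purely even case of \cref{prop:Clifford-Polynomials-free}), $\Pol_n / (\SPol_n)_+$ has the same graded rank $(n)_q!$. Surjectivity of $\bar\Phi$ follows from iterated application of the projective-bundle formula to the tower $\Fl(n) \to \Fl(n-1) \to \cdots \to \Fl(1) = \mathrm{pt}$, each stage being a $\mathbf{P}^k$-bundle whose cohomology is generated over the base by the Chern class of a tautological line bundle. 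Equality of graded ranks then promotes surjectivity to an isomorphism.

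For the partial flag variety with $\bm{k} = (0 = k_0, k_1, \dotsc, k_\ell = n)$, introduce the tautological subbundles $V_i$ of rank $k_i$ and quotients $Q_i ≔ V_i / V_{i-1}$ of rank $k_i - k_{i-1}$. Mapping the $j$-th elementary symmetric polynomial in the $i$-th tensor factor to $c_j(Q_i)$ yields a ring map $\Psi : \bigotimes_i \SPol_{k_i - k_{i-1}} \to H^*\Fl(\bm{k})$, and the same Whitney argument forces $\Psi$ to kill $(\SPol_n)_+$. To conclude, I would invoke the natural fibration $\Fl(n) \to \Fl(\bm{k})$ with fibre $\prod_i \Fl(k_i - k_{i-1})$, apply the Leray-Hirsch theorem together with the full-flag case, and match graded ranks on both sides via the $q$-multinomial identity relating $(n)_q!$ to $\prod_i (k_i - k_{i-1})_q!$.

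The main obstacle is the surjectivity step: one must know that Chern classes of tautological bundles generate the entire cohomology ring, which is where the iterated projective-bundle structure (or, equivalently, a Schubert calculus argument) does the real work. Once this is established, the remainder is a formal graded-rank comparison drawing on the machinery already assembled in \cref{sec:graded-ranks}.
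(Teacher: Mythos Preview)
Your sketch is the standard Borel argument and is correct as an outline; the paper, however, does not give its own proof of this proposition at all---it simply cites \autocites[prop.\ 3.6.15, rmk.\ 3.6.16]{Manivel}[or][prop.\ 3]{Fulton:Young-Tableaux} and moves on, treating the result as classical input. So there is nothing to compare your approach against: you have supplied a proof where the paper deliberately did not.
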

\begin{corollary}
	\label{cor:Grassmannian-cohomology}
	The Grassmann variety $\Gr(k,n)$ has cohomology ring
	\begin{equation}
		H_{(k,n)}≔H^*(\Gr(k,n))\isom \frac{k\bigl[\varepsilon^{(n)}_1, \dotsc, \varepsilon^{(n)}_m\bigr]}%
		{\bigl(h^{(n)}_{n-k+1}, \dotsc, h^{(n)}_{n}\bigr)},
	\end{equation}
	where the $\varepsilon$'s and $h$'s are the elementary and the complete symmetric polynomials, respectively.
\end{corollary}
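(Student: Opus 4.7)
The plan is to specialise \cref{prop:Flag-variety-cohomology} to the dimension vector $\bm{k} = (0, k, n)$, which realises $\Fl(\bm{k}) = \Gr(k,n)$ and immediately yields
\[
H^*(\Gr(k,n)) \isom (\SPol_k \otimes_k \SPol_{n-k})/(\SPol_n)_+.
\]
The remaining task is purely algebraic: to rewrite this quotient in the presentation claimed by the corollary.

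First I would identify $\SPol_k \otimes_k \SPol_{n-k}$ with the subring of $\Pol_n$ generated by the elementary symmetric polynomials $\varepsilon^{(k)}_i$ in $x_1, \dotsc, x_k$ and $\varepsilon^{(n-k)}_j$ in $x_{k+1}, \dotsc, x_n$, using that on a split set of variables the algebra of symmetric polynomials in each block is polynomial in the elementary symmetric polynomials of that block. With the generating functions $A(t) = \sum_i \varepsilon^{(k)}_i t^i$ and $B(t) = \sum_j \varepsilon^{(n-k)}_j t^j$, the classical factorisation
\[
A(t)\,B(t) = \prod_{i=1}^n(1+x_i t) = \sum_{m=0}^n \varepsilon^{(n)}_m t^m
\]
translates the ideal $(\SPol_n)_+$, which is generated by $\varepsilon^{(n)}_1,\dotsc,\varepsilon^{(n)}_n$, into the single formal-series relation $A(t)\,B(t) = 1$ in the quotient.

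From $A(t)\,B(t) = 1$ one solves $B(t) = 1/A(t) = \sum_j (-1)^j h^{(k)}_j t^j$ by Newton's identities, eliminating every $\varepsilon^{(n-k)}_j$ for $0 \le j \le n-k$ as an explicit polynomial in the $\varepsilon^{(k)}_i$. The only surviving content of $A(t)\,B(t) = 1$ lies in the coefficients of $t^j$ for $n-k < j \le n$, where $B$ contributes nothing; these yield exactly the relations $h^{(k)}_{n-k+1} = \dotsb = h^{(k)}_n = 0$, which cut out the claimed ideal of $k[\varepsilon^{(k)}_1,\dotsc,\varepsilon^{(k)}_k]$. The main hurdle is purely bookkeeping: tracking the index shifts and sign conventions so that the elimination produces precisely the set of generators and relations stated in the corollary, and verifying that the power-series elimination does not introduce spurious further relations.
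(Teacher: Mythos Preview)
Your proposal is correct. The paper states this as a corollary of \cref{prop:Flag-variety-cohomology} without giving any proof, so there is nothing to compare against; your argument via the generating-function identity $A(t)B(t)=1$ and elimination of the $\varepsilon^{(n-k)}_j$ is exactly the standard derivation one would supply. Incidentally, you have implicitly corrected what appears to be a typographical slip in the statement: the generators should be $\varepsilon^{(k)}_1,\dotsc,\varepsilon^{(k)}_k$ and the relations $h^{(k)}_{n-k+1},\dotsc,h^{(k)}_n$, as you write, rather than carrying the superscript $(n)$.
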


Recall the notion of graded ranks from \cref{sec:graded-ranks}.
\begin{lemma}
	\label{lem:graded-dimension-Grassmann-cohomology}
	The cohomology ring $H(k,n)≔H^*(\Gr(k,n))$ of the Grassmannian
	has graded dimension
	$
		\dim_{q,k} H(k,n) = \binom{n}{k}_q
	$
	with the \term{$q$-binomial coefficient}
	 $\binom{n}{k}_q ≔ \frac{(n)_q!}{(k)_q! (n-k)_q!}$.
\end{lemma}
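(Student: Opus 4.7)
The plan is to invoke the presentation from \cref{cor:Grassmannian-cohomology}, or equivalently the $\bm{k} = (0, k, n)$ instance of \cref{prop:Flag-variety-cohomology}, which identifies $H(k,n)$ with the graded quotient $\bigl(\SPol_k \otimes_k \SPol_{n-k}\bigr) / (\SPol_n)_+$. Computing $\dim_{q,k} H(k,n)$ thereby reduces to a Hilbert-series calculation for this quotient of rings of symmetric polynomials.

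I would exhibit $\SPol_k \otimes_k \SPol_{n-k}$ as a graded-free $\SPol_n$-module of graded rank $\binom{n}{k}_q$ by chaining two applications of \cref{rmk:interesting-graded-ranks}: on the one hand, $\Pol_n$ is free of graded rank $(n)_q!$ over $\SPol_n$; on the other, via the tensor decomposition $\Pol_n = \Pol_k \otimes_k \Pol_{n-k}$, it is free of graded rank $(k)_q! \, (n-k)_q!$ over $\SPol_k \otimes_k \SPol_{n-k}$. Faithful flatness of $\Pol_n$ over $\SPol_n$ forces $\SPol_k \otimes_k \SPol_{n-k}$ to be flat over $\SPol_n$, and combined with finite generation, graded Nakayama upgrades flatness to freeness. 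Comparing graded ranks then pins the rank down to $(n)_q!/\bigl((k)_q! \, (n-k)_q!\bigr) = \binom{n}{k}_q$. Tensoring with the residue field $\SPol_n/(\SPol_n)_+ = k$ yields $H(k,n)$ as a $k$-vector space of graded dimension $\binom{n}{k}_q$.

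The main obstacle is the descent step that yields freeness rather than merely matching Hilbert series; the inclusion $\SPol_n \hookrightarrow \SPol_k \otimes_k \SPol_{n-k}$ of invariant rings is not a priori well-behaved without some module-theoretic input. A cleaner alternative would be to verify directly that $h^{(n)}_{n-k+1}, \dotsc, h^{(n)}_n$ is a regular sequence in $k\bigl[\varepsilon_1^{(n)}, \dotsc, \varepsilon_k^{(n)}\bigr]$ --- the relations have degrees $n-k+1, \dotsc, n$, matching the dimension drop to the Artinian quotient --- whence a Koszul computation produces
\[
	\dim_{q,k} H(k,n) = \prod_{i=1}^{k}\frac{1}{1 - q^{i}} \cdot \prod_{j=n-k+1}^{n}(1 - q^{j}) = \frac{\prod_{j=n-k+1}^{n}(1-q^j)}{\prod_{i=1}^{k}(1-q^i)},
\]
which after converting via $(m)_q = (1-q^m)/(1-q)$ is precisely $\binom{n}{k}_q$.
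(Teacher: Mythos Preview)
Your proposal is correct, and in fact your ``cleaner alternative'' is precisely the route the paper takes. The paper works with the presentation $\SPol_k/(h_{n-k+1},\dotsc,h_n)$ from \cref{cor:Grassmannian-cohomology} and computes the graded dimension of the ideal by an inclusion--exclusion argument, arriving at
\[
	\dim_{q,k}(h_{n-k+1},\dotsc,h_n) = \bigl(1 - (1-q^{n-k+1})\dotsm(1-q^n)\bigr)\dim_{q,k}\SPol_k,
\]
whence the quotient has graded dimension $\prod_{j=n-k+1}^{n}(1-q^j)\big/\prod_{i=1}^{k}(1-q^i) = \binom{n}{k}_q$. This is your Koszul computation in disguise: the inclusion--exclusion formula is valid exactly because the $h_j$ form a regular sequence, a point the paper leaves implicit but you make explicit.

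Your primary route via freeness of $\SPol_k\otimes_k\SPol_{n-k}$ over $\SPol_n$ is genuinely different. It is more structural and has the advantage of generalising transparently to arbitrary partial flag varieties (indeed, the paper's \cref{cor:graded-dimension-flag-cohomology} repeats the inclusion--exclusion argument from scratch, whereas your freeness statement would give it immediately). The cost is the module-theoretic input you identify --- faithful flatness plus graded Nakayama --- which is correct but heavier than what the lemma needs. The paper's direct Hilbert-series computation is shorter for this single case; your approach would pay off if one wanted a uniform treatment of all the flag-variety quotients at once.
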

\begin{proof}
	Without quotienting out the ideal, the polynomial ring
	$\SPol_k≔k[\varepsilon_1, \dotsc, \varepsilon_k]$  in the symmetric polynomials
	has graded dimension
	$\dim_{q,k} \SPol_k = (1-q)^{-1}\dotsm(1-q^k)^{-1}$.
	To count the graded dimension of the ideal $\mathfrak{a}=(h_{n-k+1}, \dotsc, h_n)$ to be quotiented out,
	we note that each generator $h_m$ contributes a principal ideal with graded dimension
	$\dim_{q,k} (h_m)=q^m \dim_{q,k}\SPol_k$.
	We then proceed by the inclusion\-/exclusion principle to obtain
	\begin{align*}
		&\eqsp\dim_{q,k} (h_{n-k+1}, \dotsc, h_n)\\
		&=
		\Bigl(
			\bigl(q^{n-k+1} + \dotsb q^n\bigr) - \bigl(q^{2(n-k)+3} + \dotsb + q^{2n-1}\bigr) + \dotsb \pm q^{(n-k+1)+\dotsb + n}
		\Bigr) \dim_{q,k}\SPol_k \\
		&= \bigl(1- (1-q^{n-k+1})\dotsm(1-q^n)\bigr)\dim_{q,k}\SPol_k.
	\end{align*}
	The quotient thus has graded dimension
	\begin{align*}
		\dim_{q,k} (\SPol_k/\mathfrak a) &= \frac{(1-q^{n-k+1})\dotsm(1-q^n)}{ (1-q)\dotsm(1-q^k)}
		= \frac{(1-q)^{n-(n-k)} (n)_q! / (n-k)_q!}{(1-q)^k (k)_q!}
		= \tbinom{n}{k}_q.\qedhere
	\end{align*}
\end{proof}
For the cohomology ring of partial flag varieties, this yields:
\begin{corollary}
	\label{cor:graded-dimension-flag-cohomology}
	Let $\bm{k}≔(0=k_0\leq \dotsc\leq k_r=n)$ be a dimension vector.
	The cohomology ring $H_{\bm{k}}$ of the partial flag variety $\Fl(\bm{k})$
	has as graded dimension the \term{$q$-multinomial coefficient}
	\begin{equation*}
		\dim_{q,k} H(\bm{k}) = \dbinom{n}{k_1, k_2-k_1, \dotsc, k_r-k_{r-1}}_q
		≔ \frac{(n)_q!}{(k_1)_q! (k_2-k_1)_q! \dotsm (k_r-k_{r-1})_q!}.
	\end{equation*}
\end{corollary}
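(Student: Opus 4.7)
The plan is to reduce the computation to the same kind of counting argument used for the Grassmannian in \cref{lem:graded-dimension-Grassmann-cohomology}, but organised so that the inclusion-exclusion is replaced by a cleaner freeness/quotient argument. Starting from \cref{prop:Flag-variety-cohomology}, we write
\[
H(\bm{k}) \isom \SPol_{\bm{k}}/(\SPol_n)_+,
\qquad
\SPol_{\bm{k}} = \SPol_{k_1-k_0}\otimes_k\dotsb\otimes_k\SPol_{k_r-k_{r-1}}.
\]
Identifying $\SPol_{\bm k}$ with the parabolic invariants $\Pol_n^{S_{\bm k}}$ for the Young subgroup $S_{\bm k}≔S_{k_1-k_0}\times\dotsb\times S_{k_r-k_{r-1}}\subseteq S_n$, the statement will follow by comparing three graded ranks.

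First, I would note that Chevalley's theorem applies both to $S_n$ and to the reflection group $S_{\bm k}$, so $\Pol_n$ is free over $\SPol_n$ of graded rank $\ord_q(S_n)=(n)_q!$ and free over $\SPol_{\bm k}$ of graded rank $\ord_q(S_{\bm k})=\prod_i (k_i-k_{i-1})_q!$ (using \cref{lem:poincare-polynomials-coxeter-groups}). Next, I would argue that $\SPol_{\bm k}$ is itself free over $\SPol_n$; the cleanest route is to exhibit $\SPol_{\bm k}=\Pol_n^{S_{\bm k}}$ as a direct summand of $\Pol_n$ via the averaging projector $\tfrac{1}{|S_{\bm k}|}\sum_{w\in S_{\bm k}} w$, which makes it a graded projective, hence graded free, $\SPol_n$-module. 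Its graded rank over $\SPol_n$ is then forced by the chain $\SPol_n\subseteq\SPol_{\bm k}\subseteq\Pol_n$ to be
\[
\frac{\ord_q(S_n)}{\ord_q(S_{\bm k})}
=\frac{(n)_q!}{(k_1-k_0)_q!\dotsm(k_r-k_{r-1})_q!}
=\dbinom{n}{k_1,k_2-k_1,\dotsc,k_r-k_{r-1}}_q.
\]

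Finally, since $H(\bm k)=\SPol_{\bm k}/(\SPol_n)_+=\SPol_{\bm k}\otimes_{\SPol_n}k$ and $\SPol_{\bm k}$ is $\SPol_n$-free, the graded dimension of $H(\bm k)$ equals the graded $\SPol_n$-rank of $\SPol_{\bm k}$, which is exactly the $q$-multinomial coefficient above. As a sanity check, specialising to $\bm k = (0,k,n)$ recovers \cref{lem:graded-dimension-Grassmann-cohomology}.

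The only subtle step is the freeness of $\SPol_{\bm k}$ over $\SPol_n$; the graded-rank arithmetic alone only shows the expected Hilbert series, not the module structure. The direct-summand argument via the $S_{\bm k}$-averaging projector handles this, but an alternative would be to invoke Chevalley's transitivity (if $H\leq W$ are finite reflection groups acting on a polynomial ring $R$, then $R^H$ is free over $R^W$), which is standard and applies verbatim here.
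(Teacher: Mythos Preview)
Your argument is correct and takes a genuinely different route from the paper's own proof. The paper proceeds exactly as in \cref{lem:graded-dimension-Grassmann-cohomology}: it writes down the Hilbert series of $\SPol_{\bm k}$ as $\prod_a \prod_j (1-q^j)^{-1}$, then computes the Hilbert series of the ideal $(\SPol_n)_+$ by an inclusion--exclusion giving the factor $1-(1-q)\dotsm(1-q^n)\cdot(n)_q!$-type expression, and subtracts. Your proof instead establishes that $\SPol_{\bm k}$ is \emph{free} over $\SPol_n$ and reads off its graded rank from the tower $\SPol_n\subseteq\SPol_{\bm k}\subseteq\Pol_n$ together with the known ranks of $\Pol_n$ over each; the graded dimension of $H(\bm k)=\SPol_{\bm k}\otimes_{\SPol_n}k$ then drops out immediately. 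This is more structural: it yields the module-theoretic freeness statement (which the paper's Hilbert-series subtraction does not), and it sidesteps the implicit regular-sequence justification underlying the inclusion--exclusion. One caveat: the $S_{\bm k}$-averaging projector needs $\operatorname{char}k\nmid|S_{\bm k}|$, so in positive characteristic you should fall back on the Chevalley/Demazure transitivity argument you mention at the end, which does hold for Weyl groups over any field.
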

\begin{proof}
	The cohomology ring is the quotient
	\[
		\frac{
			\SPol_{k_1}⊗\dotsb ⊗\SPol_{k_l-k_{l-1}}
		}{
			\left(
				1-\bigl(1+\varepsilon_1^{(k_1)}+\dotsb+\varepsilon^{(k_1)_{k_1}}\bigr)
				\dotsm
				\bigl(1+\varepsilon_1^{(k_l-k_{l-1})}+\dotsb+\varepsilon_{(k_l-k_{l-1})}^{k_l-k_{l-1}}\bigr)
			\right)
		}
	\]
	of $\SPol_{\bm{k}} ≔ \SPol_{k_1}⊗\dotsb ⊗\SPol_{k_l-k_{l-1}}$.
	Denote the ideal quotiented out by $\mathfrak{a}$ and proceed
	as in the proof of \cref{lem:graded-dimension-Grassmann-cohomology}.
	We see that the nominator has graded dimension
	\begin{align*}
		\dim_{q,k} \Lambda
		 &= \prod_{a=1}^{l}\frac{1}{1-q}\dotsm\frac{1}{1-q^{k_a-k_{a-1}}}
		= \frac{1}{(1-q)^n}\prod_{a=1}^{l}\frac{1}{(k_a-k_{a-1})_q!},
	\end{align*}
	and the ideal $\mathfrak{a}$ quotiented out has graded  dimension
	\[
		\dim_{q,k}\mathfrak a = \bigl((1-q)^n(n)_q!-1\bigr)\dim_{q,k} \Lambda.
	\]
	This shows the assertion.
\end{proof}

\subsection{Cyclotomic quotients}
\label{sec:cyclotomic-quotients}
It is known that in the non-super case,
the cohomology ring $H_{(k,n)}$ of Grassmann varieties (see \cref{cor:Grassmannian-cohomology})
is Morita equivalent to the \term{cyclotomic quotient} of the NilHecke algebra
\cite[§5]{Lauda:Introduction-to-Diagrammatic-Algebra}.
For the odd setting,
the respective Morita equivalence has been established in \cite[§5]{EKL:odd-NilHecke}.
We introduce analogues of both rings in the Clifford setting
and prove that they are Morita equivalent.
\begin{definition}
	The \term{$m$-th cyclotomic quotient} of the Hecke Clifford superalgebra $\NHC_n$
	is the quotient $\NHC_n^m ≔ \NHC_n/\bigl((\kappa_{1,n}y_n)^m\bigr)$ by the two-sided ideal $\bigl((\kappa_{1,n}y_n)^m\bigr)$,
	with the coefficients $\kappa_{k,l}$ from \cref{eq:definition of kappas}.
	\label{def:Clifford-Grassmann-Cohomology}%
	We call the quotient $\HGrC_{(m,n)} ≔ \SPolC_{I}/\bigl(\hC^{(n)}_{k}\bigr)_{k>n-m}$
	by the two-sided ideal $\bigl(\hC^{(n)}_{k}\bigr)_{k>n-m}$
	the \term{Clifford Grassmann ring}.
\end{definition}
\begin{remark}
	In contrast to the standard definition of the cyclotomic quotient,
	we mod out some power of $y_n$ instead of $y_1$;
	the latter would necessitate taking the transpose of the matrix $M_{(n)}$
	in \cref{def:clifford-complete-symmetric-polynomial}.
\end{remark}
\begin{theorem}
	\label{prop:grassmann-cyclotomic-morita-eq}
	The rings $\NHC_n^m$ and $\HGrC_{(m,n)}$ are Morita-equivalent.
\end{theorem}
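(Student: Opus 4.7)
The plan is to adapt the classical proof of Morita equivalence between the cyclotomic NilHecke algebra and the cohomology ring of the Grassmannian (see \autocite[§5]{Lauda:Introduction-to-Diagrammatic-Algebra} and \autocite[§5]{EKL:odd-NilHecke}) to the Clifford setting. Concretely, I will construct an idempotent $e \in \NHC_n^m$ such that the corner algebra $e\NHC_n^m e$ is isomorphic to $\HGrC_{(m,n)}$ and such that $\NHC_n^m e \NHC_n^m = \NHC_n^m$; these two properties together yield the desired Morita equivalence by standard Morita theory.

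First, I define $e$ as a suitable $\Cl_I$-normalisation of the composite $\delC_{w_0} \cdot y^\rho$, where $y^\rho \in \PolC_I$ is a Clifford analogue of the half\nobreakdash-staircase monomial $y_1^{n-1} y_2^{n-2} \dotsm y_{n-1}$, dressed with $\gamma$-factors so that $\delC_{w_0}(y^\rho)$ becomes a unit in $\Cl_I$. By \cref{lem:relations-schubert-polynomials}, $\delC_{w_0}$ sends $y^\rho$ to a Clifford unit, and combined with \cref{lem:divided-difference-on-schubert-polynomials} this yields $e^2 = e$ in $\NHC_n$ after rescaling; passage to the quotient $\NHC_n^m$ preserves idempotence.

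Second, I identify $e \NHC_n^m e$ with $\HGrC_{(m,n)}$. By \cref{prop:Clifford-Polynomials-free}, $\PolC_I$ is free of graded rank $(n)_q!$ over $\SPolC_I$ on a Schubert basis, and the right multiplication by $e$ projects onto the $\SPolC_I$-summand. Hence $e \NHC_n e \cong \SPolC_I$ as $\Cl_I$-algebras, and it remains to check that modding out by the cyclotomic relation $(\kappa_{1,n} y_n)^m = 0$ descends to modding out by the ideal $(\hC^{(n)}_k)_{k > n-m}$. This is where \cref{prop:Complete-Clifford-Polynomials:Vanishing-Relation} enters decisively: the identity
\[
\sum_{l=0}^{m} (-1)^l \hC^{(n)}_{m-l} \tilde{\kappa}_{1,2} \dotsm \tilde{\kappa}_{1,l} \epsC^{(n)}_l = \delta_{m,0}
\]
allows one to rewrite suitably dressed powers of $y_n$ as $\SPolC_I$-linear combinations of $\hC^{(n)}_k$'s, and a direct manipulation of the matrix $M_{(n)}$ identifies the image of $(\kappa_{1,n} y_n)^m$ in the corner algebra with the ideal $(\hC^{(n)}_k)_{k > n-m}$. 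Third, fullness of $e$ follows from the Schubert-basis decomposition together with the faithfulness of the $\NHC_n$-action from \cref{lem:Clifford-NilHecke-acts-Faithfully}: every Schubert component of $\PolC_I$ is hit by some element of $\NHC_n^m e$, so rank-one projectors onto each Schubert summand lie in $\NHC_n^m e \NHC_n^m$ and sum to the identity.

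The main obstacle is step two: tracking the Clifford twists $\gamma_{i,i\pm 1}$ and $\kappa_{k,l}$ as divided differences interact with powers of $y_n$ requires careful bookkeeping, and the correct form of $e$ may differ by $\Cl_I$-units from the naïve analogue of the classical Lauda idempotent. The recursion of \cref{prop:Complete-Clifford-Polynomials:Vanishing-Relation} and the explicit matrix form of $M_{(n)}$ should make this manageable, but this is the only step where substantive work beyond a transcription of the classical and odd proofs is required.
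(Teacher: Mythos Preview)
Your idempotent/corner-algebra framing is correct and will work, but it is a repackaging of the paper's argument rather than a genuinely different route. The paper proceeds by showing directly that $\NHC_n^m \cong \operatorname{Mat}_{n!}(\HGrC_{(m,n)})$: it writes down a $\SPolC_I$-basis $\{b_k\}$ of $\PolC_I$ on which left multiplication by $\kappa_{1,n}y_n$ acts precisely by the matrix $M_{(n)}$ from \cref{def:clifford-complete-symmetric-polynomial}, so that imposing $(\kappa_{1,n}y_n)^m=0$ becomes the matrix equation $M_{(n)}^m=0$. Two short claims then show that the entries of $M_{(n)}^m$ generate exactly the ideal $(\hC^{(n)}_{n-m+1},\dotsc,\hC^{(n)}_n)$ in $\SPolC_I$, whence the matrix-algebra isomorphism and Morita equivalence follow immediately.

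Your step two---identifying the image of the cyclotomic ideal in the corner algebra---is this same computation in disguise: once you have $e\NHC_n e \cong \SPolC_I$ (which is essentially \cref{prop:Clifford-Polynomials-free}(iii)), the two-sided ideal generated by $(\kappa_{1,n}y_n)^m$ in $\NHC_n\cong\operatorname{Mat}_{n!}(\SPolC_I)$ is $\operatorname{Mat}_{n!}(J)$ where $J\trianglelefteq\SPolC_I$ is generated by the matrix entries of $M_{(n)}^m$, and you must still prove $J=(\hC^{(n)}_k)_{k>n-m}$. The paper's approach buys you a cleaner endgame: fullness of $e$ is automatic from the matrix-algebra form, so your step three becomes unnecessary. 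Your approach has the mild advantage of making the role of the primitive idempotent explicit, which some readers find more conceptual; but you should be aware that the ``direct manipulation of the matrix $M_{(n)}$'' you defer is the entire substance of the proof, and the paper's two claims about column-generation of $M_{(n)}^m$ are exactly what you will need to write out.
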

\begin{proof}
	The proof in \autocite[§5]{Lauda:Introduction-to-Diagrammatic-Algebra}
	does not rely on commutativity of the rings involved and thus applies immediately in our setting.
	We review the argument briefly.
	The reader is strongly encouraged to verify the following calculation
	for $n=3$.
	Let us start with
	\begin{align}\notag
		\epsC^{(n)}_n
		&= \gamma_{1,2}y_1 \, \gamma_{2,3}y_2 \dotsm \gamma_{n,n+1}y_n\\\notag
		&= \bigl(\gamma_{1,2}y_1 \, \gamma_{2,3}y_2 \dotsm \gamma_{n-1,n}y_{n-1}\bigr)\gamma_{n,n+1}y_n\\\notag
		&= \bigl(\epsC^{(n)}_{n-1} - \tilde{\epsC}^{(n)}_{n-1}\bigr)
			\smash{\underbrace{\gamma_{n,n+1}}_{\kappa_{n,n}}}y_n\\
	\shortintertext{where we set}\notag
		\tilde{\epsC}^{(n)}_{n-1}
		&= \gamma_{1,2}y_1\dotsm\gamma_{n-2,n-1}y_{n-2}\,\gamma_{n,n-1}y_n
		+ \dotsb
		+ \gamma_{2,1}y_2\,\gamma_{3,2}y_3\dotsm \gamma_{n,n-1}y_n\\\notag
		&= \bigl(\gamma_{1,2}y_1\dotsm \gamma_{n-2,n-1}y_{n-2}
			+\dotsb
			+\gamma_{2,1}y_2\dotsb \gamma_{n-1,n-2}y_{n-1}
			\bigr) \gamma_{n,n-1}y_n\\\notag
		&= \bigl(\epsC^{(n)}_{n-2} - \tilde{\epsC}^{(n)}_{n-2}\bigr)
		\smash{\underbrace{\gamma_{n,n-1}}_{\kappa_{n-1,n}}} y_n\\
	\shortintertext{where we set}\notag
		\tilde{\epsC}^{(n)}_{n-2}
		&= \bigl(\gamma_{1,2}y_1\dotsm \gamma_{n-3,n-2}y_{n-3}
		+ \dotsb
		+ \gamma_{3,2}y_3\dotsm \gamma_{n-1,n-2}y_{n-1}\bigr) \gamma_{n-1,n-2}\gamma_{n-1,n}\gamma_{n,n-1} y_n\\\notag
		&= \bigl(\epsC^{(n)}_{n-3}-\tilde{\epsC}^{(n)}_{n-3}\bigr)
			\smash{\underbrace{\gamma_{n-1,n-2}\gamma_{n-1,n}\gamma_{n,n-1}}_{\kappa_{n-2,n}}} y_n\\\notag
	&\vdotswithin{=}\\\notag
		\tilde{\epsC}^{(n)}_{2}
		&= \bigl(\epsC^{(n)}_{1} - \tilde{\epsC}^{(n)}_{1}\bigr)
			\smash{\underbrace{\gamma_{3,2}\gamma_{3,4}\dotsm\gamma_{n-1,n-2}\gamma_{n-1,n}\gamma_{n,n-1}}_{\kappa_{2,n}}} y_n\\
	\shortintertext{where}\notag
		\tilde{\epsC}^{(n)}_{1} &=
			\underbrace{\gamma_{2,1}\gamma_{2,3}\dotsm\gamma_{n-1,n-2}\gamma_{n-1,n}\gamma_{n,n-1}}_{\kappa_{1,n}} y_n.\\
	\intertext{%
		To state the above calculation differently, we have that
	}\notag
		0 &=
		\epsC^{(n)}_n - \Bigl(\epsC^{(n)}_{n-1} - \bigl(\cdots\bigl(\epsC^{(n)}_1 - \kappa_{1,n}y_n\bigr)\cdots\bigr)\kappa_{n-1,n}y_n\Bigr) \kappa_{n,n} y_n\\\notag
		&= \sum_{k=0}^{n} (-1)^k \epsC^{(n)}_{n-k} \kappa_{n-k+1,n}y_n\dotsm \kappa_{n,n}y_n\\
		&= \label{eq:relation-of-bi-basis}
			(-1)^n\sum_{k=0}^{n} (-1)^{k} \epsC^{(n)}_k \smashoperator{\prod_{l=k+1}^{n}}\kappa_{l,n}y_n.
	\end{align}
	Set $b_l ≔ \prod_{l=k+1}^{n}\kappa_{l,n}y_n$ for $1 \leq k\leq n$;
	in particular $b_{n} ≔ 1$.
	Recall the $\Cl_{I}$-bimodule $\mathfrak{H}_{I}$
	from the proof of \cref{lem:Clifford-Polynomials-free-over-symmetric}.
	We define a similar free $\Cl_{I}$-bimodule of graded rank $(n)_q!$.
	For multiindices $\bm{\alpha}$, let
	\begin{align}
		B_{\bm{\alpha}} &\coloneqq ⟨y^{\bm{\alpha}} b_1, \dotsc, y^{\bm{\alpha}} y_n^{n-1} \cdot b_n⟩_{\Cl_{I}},\\
		\label{eq:cyclotomic-quotients:H-tilde}
		\tilde{\mathfrak{H}}_{I} &≔ ⟨y_1^{\alpha_1} \dotsm y_{n-1}^{\alpha_{n-1}} \mid \alpha_i <i⟩_{\Cl_{I}}
		= ⨁_{\bm{\alpha}} B_{\bm{\alpha}}
	\end{align}
	It is analogous to the proof of \cref{lem:Clifford-Polynomials-free-over-symmetric}
	that multiplication gives an isomorphism
	$\tilde{\mathfrak{H}}_{I} ⊗_{\Cl_{I}} \SPolC_{I} \xto{\isom} \PolC_{I}$.
	Recall from \cref{eq:splitting-kappa-factors} how to split the $\kappa$'s.
	Multiplication from the left by $\kappa_{1,n}y_n$ acts on this basis of $B_{\bm{\alpha}}$ by
	\begin{alignat*}{3}
		(\kappa_{1,n}y_n\cdot)\colon B_{\bm{\alpha}} &\to B_{\bm{\alpha}},
		\\*
		1 = b_{n}
			&  \mapsto \kappa_{1,n}y_n
			&& \overset{\mathclap{\raisebox{3pt}[0pt][0pt]{\scriptsize{\cref{eq:splitting-kappa-factors}}}}}{=} \kappa_{1,n}\kappa_{n,n}\kappa_{n,n} y_n
			&&=\smash{\overbrace{\kappa_{1,n}\kappa_{n,n}}^{\tilde{\kappa}_{1,n}}} b_{n-1}
		\\*
		b_{n-1}
			&  \mapsto \kappa_{1,n}y_n b_{n-1}
			&&= \tilde\kappa_{1,n-1} \kappa_{n-1,n}y_n b_{n-1}
			&&= \tilde\kappa_{1,n-1} b_{n-2}
		\\*
			& \vdotswithin{\mapsto}
		\\
		b_2 & \mapsto \kappa_{1,n}y_n b_2 &&= \tilde\kappa_{1,2}\kappa_{2,n} y_n b_2
		&&=
			\tilde\kappa_{1,2}
			b_1 \\*
		b_1 &\mapsto \kappa_{1,n}y_nb_1 &&\overset{\mathclap{\raisebox{3pt}[0pt][0pt]{\scriptsize{\cref{eq:relation-of-bi-basis}}}}}{=}
			\textstyle\sum_{k=1}^{n} (-1)^k \epsC^{(n)}_k
			\underbrace{
				\textstyle \prod_{l=k+1}^{n}\kappa_{l,n}y_n
			}_{b_k}.
			\span\span 
	\end{alignat*}
	This shows that multiplication with $\kappa_{1,n}y_n$ from the left
	acts on the basis from \cref{eq:cyclotomic-quotients:H-tilde}
	by the matrix
	\begin{equation*}
		\begin{psmallmatrix}
			\epsC^{(n)}_1 & \tilde{\kappa}_{1,2}\\
			- \epsC^{(n)}_2 && \tilde{\kappa}_{1,3}\\
			\vdots &&& \ddots\\
			(-1)^{n-2} \epsC^{(n)}_{n-1} &&&& \tilde{\kappa}_1,n\\
			(-1)^{n-1} \epsC^{(n)}_{n} &&&& 0
		\end{psmallmatrix} = M_{(n)}.
	\end{equation*}
	Quotienting out the two-sided ideal $(\kappa_{1,n}y_n)^m$ of $\NHC_n$
	is the same as requiring that $M_{(n)}^m=0$.
	\begin{claim}
		The ideal $(h_{n-m+1},\dotsc, h_{n})$ of $\PolC_{I}$
		is also generated by the first column of $M_{(n)}^{m+1}$.
	\end{claim}
	By definition, $h_{k+1}$ is the top left entry of $M_{(n)}\cdot M_{(n)}^{k}$ for any $k$.
	Recall that we denoted the entries of the first column $\bigl(M_{(n)}^{k+1}\bigr)_{\bullet,1}$ of $M_{(n)}^{k+1}$
	by
	\[
		\bigl(M_{(n)}^{k+1}\bigr)_{\bullet,1} \eqqcolon \bigl(h_{k+1}, h_{(1), k+2}, h_{(2), k+3}, \dotsc, h_{(n-k-1), n}\bigr)^T.
	\]
	We thus have
	\begin{align*}
		h_{n-m+2} &= \epsC^{(n)}_1 h_{n-m+1} + \tilde{\kappa}_{1,2} h_{(1),n-m+2}\\
		&\equiv \tilde{\kappa}_{1,2} h_{(1),n-m+2} \pmod{h_{n-m+1}}\\
		h_{n-m+3} &=
		\epsC^{(n)}_1 h_{n-m+2} + \tilde{\kappa}_{1,2} h_{(1),n-m+3}\\
		&= \mathrlap{\epsC^{(n)}_2 \left(
			\epsC^{(n)}_1 h_{n-m+1} + \tilde{\kappa}_{1,2} h_{(1),n-m+2}
		\right)+
		\tilde{\kappa}_{1,2}\left(
			-\epsC^{(n)}_2 h_{n-m+1} + \tilde{\kappa}_{1,3} h_{(2),n-m+2}
		\right)}\\
		&\equiv \tilde{\kappa}_{1,2} \tilde{\kappa}_{1,3}  h_{(2), n-m+3} \pmod{h_{n-m+1}, h_{n-m+2}}\\
		\shortvdotswithin{=}
		h_{n} &\equiv \tilde{\kappa}_{1,2} \dotsm \tilde{\kappa}_{1,m}  h_{(m-1), n} \pmod{h_{n-m+1}, \dotsc, h_{n-1}}.
	\end{align*}
	This proves the claim, since all $\tilde{\kappa}$'s are units.
	Taking the product  $M_{(n)}^{m} = M_{(n)}^{m-n+1} \cdot  M_{(n)}^{n-1}$
	shows that the last column of $M_{(n)}^{m}$ has entries
	\[
		\bigl(M_{(n)}^{m}\bigr)_{\bullet,n} =
		\bigl(h_{n-m+1}, h_{(1), n-m+2}, \dotsc, h_{(m-1,n)}\bigr)^T \tilde{\kappa}_{1,2}\dotsm\tilde{\kappa}_{1,n}.
	\]
	Thus, the entries of $\bigl(M_{(n)}^{m}\bigr)_{\bullet,n}$
	also generate the ideal $(h_{n-m+1},\dotsc, h_{n})$ of $\PolC_{I}$.
	\begin{claim}
		All entries of $M_{(n)}^{m}$ are $\SPolC_{I}$-linear combinations
	of entries of the last column $\bigl(M_{(n)}^{m}\bigr)_{\bullet,n}$.
	\end{claim}
	Since $M_{(n)}^{k+1} = M_{(n)}^{1} M_{(n)}^{k}$ for any $k$,
	the entries of $\bigl(M_{(n)}^{k+1}\bigr)_{\bullet,1}$
	are $\SPolC_{I}$-linear combinations of entries of $\bigl(M_{(n)}^{k}\bigr)_{\bullet,1}$.
	Since $M_{(n)}^{k+1} = M_{(n)}^{k} M_{(n)}^{1}$,
	we have $\bigl(M_{(n)}^{k+1}\bigr)_{\bullet,2}=\tilde\kappa_{1,2} \bigl(M_{(n)}^{k}\bigr)_{\bullet,1}$.
	The entries of the first column $\bigl(M_{(n)}^{k+1}\bigr)_{\bullet,1}$ thus are linear combinations
	of entries of the second column $\bigl(M_{(n)}^{k+1}\bigr)_{\bullet,2}$;
	the claim follows iteratively.

	Altogether, we have shown that requiring $M_{(n)}^{m}=0$ is the same
	as quotienting out the ideal $(h_{n-m+1},\dotsc, h_{n})$ of $\PolC_{I}$.
	Applying this to each summand $B_{\bm{\alpha}}$ of \cref{eq:cyclotomic-quotients:H-tilde}
	yields
	\[
		\NHC_n^m \isom \operatorname{Mat}_{n!}(\SPolC_{I})/\bigl(M_{(n)}^{m}\bigr) \isom \operatorname{Mat}_{n!}(\HC_{(m,n)}),
	\]
	where $\bigl(M_{(n)}^{m}\bigr)$ is the two-sided ideal generated by matrices $M_{(n)}^{m}$.
	This establishes the asserted Morita equivalence.
\end{proof}

\subsection{Clifford superalgebras associated to partial flag varieties}
\label{sec:partial-flag-varieties}
Let $\epsC^{(k,n)}_m$ be the polynomial of degree $m$
in the indeterminates $y_k,\dotsc,y_n$
that is obtained from $\epsC^{(n-k)}_m$ by replacing every index $i$ by $i+k$.
Recall from \cref{lem:Elementary-Clifford-Polynomials:Recursively}
the recursion formula for the elementary $\delC$-symmetric polynomials.
We can easily derive the following corollary
by regrouping the terms of \crefrange{eqn:Elementary-Clifford-Polynomials:Recursively 1}{eqn:Elementary-Clifford-Polynomials:Recursively n}:
\begin{corollary}
	\label{cor:Elementary-Clifford-Polynomials:Recursively-Left}
	The elementary $\delC$-symmetric polynomials satisfy the recursion formula
	\begin{equation}
		\label{eqn:Elementary-Clifford-Polynomials:Recursively-Left}
		\epsC^{(n)}_m = \gamma_{1,2}\,y_1\, \epsC^{(1,n)}_{m-1} + \gamma_{2,1}\gamma_{2,3}\,\epsC^{(1,n)}_m
	\end{equation}
	for the elementary $\delC$-symmetric polynomials,
	where $\gamma_{i,i\pm 1}$ is as defined in \cref{lem:Elementary-Clifford-Polynomials:two-indeterminates}.
\end{corollary}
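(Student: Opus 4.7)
The plan is to prove the claim by induction on $n$, directly manipulating the right-hand recursion of \cref{lem:Elementary-Clifford-Polynomials:Recursively}. The base case $n = 2$ is a direct verification: for $m = 1$ one checks $\gamma_{1,2}y_1 \cdot 1 + \gamma_{2,1}\gamma_{2,3} \cdot \gamma_{2,3}y_2 = \gamma_{1,2}y_1 + \gamma_{2,1}y_2 = \epsC^{(2)}_1$ using $\gamma_{2,3}^2 = 1$, and for $m = 2$ the $\epsC^{(1,2)}_2$ term vanishes so only $\gamma_{1,2}y_1 \cdot \gamma_{2,3}y_2 = \epsC^{(2)}_2$ remains.

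For the inductive step, I write $C(m,n) \coloneqq \gamma_{m+1,m}\gamma_{m+1,m+2}\dotsm\gamma_{n-1,n-2}\gamma_{n-1,n}\gamma_{n,n-1}$ for the prefactor from the right-hand recursion, expand
\[
\epsC^{(n)}_m = \epsC^{(n-1)}_m + \epsC^{(n-1)}_{m-1}\, C(m,n)\, y_n,
\]
and apply the inductive hypothesis to both summands. Regrouping by whether the contribution is prefixed by $\gamma_{1,2}y_1$ or by $\gamma_{2,1}\gamma_{2,3}$ yields
\[
\epsC^{(n)}_m = \gamma_{1,2}y_1 \bigl[\epsC^{(1,n-1)}_{m-1} + \epsC^{(1,n-1)}_{m-2}\, C(m,n)\, y_n\bigr] + \gamma_{2,1}\gamma_{2,3} \bigl[\epsC^{(1,n-1)}_{m} + \epsC^{(1,n-1)}_{m-1}\, C(m,n)\, y_n\bigr].
\]
Finally, I identify each bracket with $\epsC^{(1,n)}_{m-1}$ respectively $\epsC^{(1,n)}_m$, applying the right-hand recursion in the shifted index set $\{2,\dotsc,n\}$.

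The main obstacle is matching the Clifford prefactors after the shift. The shifted recursion for $\epsC^{(1,n)}_{k}$ involves a prefactor $C'(k,n)$ whose telescoping product of $\gamma$-factors starts at index $k+2$ rather than $k+1$. One must check that $C'(m-1,n) = C(m,n)$ (which holds since both telescoping products start at index $m+1$), and for the $\gamma_{2,1}\gamma_{2,3}$-branch one must account for the apparent extra factor $\gamma_{m+1,m}\gamma_{m+1,m+2}$ that distinguishes $C(m,n)$ from $C'(m,n)$. Resolving this amounts to showing that this extra factor is absorbed by an appropriate commutation through $\epsC^{(1,n-1)}_{m-1}$ using the involutivity identity $\gamma_{i,j}^2 = 1$ and the super-commutation relations between $\cl_i$ and $y_j$, which explains why the author characterises the derivation as mere ``regrouping.''
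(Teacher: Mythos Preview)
Your inductive strategy is more explicit than the paper's one-line ``regrouping,'' and you correctly locate the only nontrivial step: identifying the second bracket $\epsC^{(1,n-1)}_{m} + \epsC^{(1,n-1)}_{m-1}\, C(m,n)\, y_n$ with $\epsC^{(1,n)}_m$, which requires reconciling $C(m,n)$ with the shifted prefactor $C'(m,n)$. However, your proposed resolution---that the excess factor $\gamma_{m+1,m}\gamma_{m+1,m+2}$ is ``absorbed by an appropriate commutation'' through $\epsC^{(1,n-1)}_{m-1}$---does not go through. In the purely odd case one has $\gamma_{m+1,m}\gamma_{m+1,m+2} = (-\cl_{m+1})(\cl_{m+1}) = -1$, a central scalar, and no commutation can remove a factor of $-1$. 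Concretely, for $n=3$, $m=2$ with all indices odd and $x_i \coloneqq \cl_i y_i$, the left-hand side of the corollary is $x_1x_2 - x_1x_3 + x_2x_3$, whereas your right-hand side, with $\epsC^{(1,3)}_2$ read as the literal index shift $x_2x_3$ of $\epsC^{(2)}_2$, gives $x_1x_2 - x_1x_3 - x_2x_3$.

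The gap thus lies not in your strategy but in the paper's conventions: the textual definition of $\epsC^{(k,n)}_m$ as a pure index shift of $\epsC^{(n-k)}_m$ does not make the corollary literally true (and is already inconsistent with entries of the paper's own \cref{tab:elementary-d-symmetric-polynomials:Recursively-Left}, where $\epsC^{(1,3)}_2$ appears as $\gamma_{2,3}y_2\,\gamma_{3,2}y_3$ rather than the shifted $\gamma_{2,3}y_2\,\gamma_{3,4}y_3$). The ``regrouping'' the paper intends effectively \emph{defines} $\epsC^{(1,n)}_m$ as whatever remains after stripping $\gamma_{1,2}y_1$ from the monomials containing it and $\gamma_{2,1}\gamma_{2,3}$ from the rest; under that reading the corollary is tautological and your induction closes trivially. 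If instead you insist on the literal-shift definition, then the discrepancy $\gamma_{m+1,m}\gamma_{m+1,m+2}$ is genuine and must be built into the statement---it cannot be commuted away.
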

\begin{table}
	\caption{%
		Elementary $\delC$-symmetric polynomials $\epsC^{(k,n)}_m$ for $n=3$,
		as defined in \cref{def:Elementary-Clifford-Polynomials:two-indeterminates}.
		The leftmost column is the same as in \cref{tab:elementary-d-symmetric-polynomials}.
		The braces illustrates how one can construct the polynomials recursively
		as stated in \cref{cor:Elementary-Clifford-Polynomials:Recursively-Left},
		starting with the rightmost polynomial $\epsC^{(2,3)}_3 = \gamma_{3,4}y_3$.
	}
	\label{tab:elementary-d-symmetric-polynomials:Recursively-Left}
	\centering
	\small
	\addtolength{\arraycolsep}{1em}
	$\begin{array}{@{}l@{}lll@{}}
		\toprule
		m\backslash k+1 & \hfil 1 & \hfil 2 & \hfil 3\\
		\midrule
		1
		&
		\gamma_{1,2}\, y_1
		+ \underbrace{
	  \gamma_{2,1}\, y_2
			+ \gamma_{2,1}\gamma_{2,3}\gamma_{3,2}\ y_3
		}_{\gamma_{2,1}\gamma_{2,3}\epsC^{(1,3)}_1}
		&
		\gamma_{2,3}\, y_2
		+ \underbrace{\gamma_{3,2}\,y_3}_{\mathclap{\gamma_{3,2}\gamma_{3,4}\,\epsC^{(1,3)}_1}}
		& \gamma_{3,4}\,y_3
		\\
		2
		&
		\underbrace{
	  \gamma_{1,2}\,y_1\gamma_{2,3}\, y_2
			+ \gamma_{1,2}\,y_1\gamma_{3,2}\, y_3
		}_{\gamma_{1,2}\,y_1\epsC{(1,3)}_1}
		+ \gamma_{2,1}\,y_2\ \gamma_{3,2}\, y_3
		& \gamma_{2,3}\, y_2\,\gamma_{3,2}\, y_3\\
		3
		& \gamma_{1,2}\,y_1\,\underbrace{\gamma_{2,3}\,y_3\,\gamma_{3,4}\,y_3}_{\epsC^{(1,3)}_2}\\
		\bottomrule
	\end{array}$
\end{table}
\begin{example}
	The “regrouping” of terms is best made explicit
	by considering the first $\delC$-symmetric polynomials
	listed explicitly in \cref{tab:elementary-d-symmetric-polynomials}.
	We can start with the polynomial $\epsC^{(2,3)}_1≔\gamma_{3,4} y_3$
	as defined in \cref{def:Elementary-Clifford-Polynomials:two-indeterminates}
	and construct the polynomials $\epsC^{(k,3)}_m$
	for $0\leq k<3$ and $m\leq n-k$
	as described in the corollary.
	The resulting polynomials are listed in \cref{tab:elementary-d-symmetric-polynomials:Recursively-Left}
	with the grouping indicated by braces.
\end{example}
The recursive relations in
\crefrange{eqn:Elementary-Clifford-Polynomials:Recursively 1}{eqn:Elementary-Clifford-Polynomials:Recursively n} and \cref{eqn:Elementary-Clifford-Polynomials:Recursively-Left}
from \cref{lem:Elementary-Clifford-Polynomials:Recursively,cor:Elementary-Clifford-Polynomials:Recursively-Left}
are subsumed in the following definition:
\begin{definition}
	\label{definition:Clifford-Symmetric-Polynomials:Lambda-Coefficients}
	For $1<k<n$, let $\lambda^{(0,k,n)}_{m,l}∈\Cl_{I}$ be the coefficients defined by
	\begin{equation}
		\label{eqn:Clifford-Flag-Varieties:Expansion-Coefficients}
		\epsC^{(n)}_m = \sum_{\mathclap{0\leq l\leq m}}
		\epsC^{(k)}_l \lambda^{(0,k,n)}_{m,l}\, \epsC^{(k,n)}_{m-l}.
	\end{equation}
\end{definition}
\begin{example}
	\label{example:Clifford-Symmetric-Polynomials:Lambda-Coefficients}
	By \crefrange{eqn:Elementary-Clifford-Polynomials:Recursively 1}{eqn:Elementary-Clifford-Polynomials:Recursively n} and \cref{eqn:Elementary-Clifford-Polynomials:Recursively-Left}
	we know the coefficients $\lambda^{(0,k,n)}_{m,l}$ for two special cases:
	\begin{thmlist}
	\item $\lambda^{(0,n-1,n)}_{m,l}$ is the coefficient of the expansion
		$\epsC^{(n)}_m = \sum_l \epsC^{(n-1)}_l\lambda_l \gamma_{n,n+1} y_n$.
		The recursion formula from \cref{lem:Elementary-Clifford-Polynomials:Recursively} gives
		\settowidth{\algnRef}{$\gamma_{m+1,m}\gamma_{m+1,m+2}\dotsm \gamma_{n-1,n-2}\gamma_{n-1}{n}$}
		\begin{align*}
			\lambda^{(0,n-1,n)}_{m,l} &=
			\begin{cases}
				1 & \text{if $l=m$}\\
				\gamma_{m+1,m}\gamma_{m+1,m+2}\dotsm \gamma_{n-1,n-2}\gamma_{n-1}{n} & \text{if $l=m-1$}\\
				0 & \text{otherwise.}
			\end{cases}
		\end{align*}
	\item $\lambda^{(0,1,n)}_{m,l}$ is the coefficient of the expansion
		$\epsC^{(n)}_m = \sum \gamma_{1,2} y_1 \lambda_l \epsC^{(1, n-1)}_l$.
		By \cref{cor:Elementary-Clifford-Polynomials:Recursively-Left} these are given by
		\begin{align*}
			\mathmakebox[\widthof{$\lambda^{(0,n-1,n)}_{m,l}$}][r]{\lambda^{(0,1,n)}_{m,l} }
			&=
			\begin{cases}
				\mathmakebox[\algnRef][l]{\gamma_{2,1}\gamma_{2,3}} & \text{if $l=0$}\\
				\mathmakebox[\algnRef][l]{1} & \text{if $l=1$}\\
				\mathmakebox[\algnRef][l]{0} & \text{otherwise.}
			\end{cases}
		\end{align*}
	\end{thmlist}
	In general, however, it seems to be difficult to provide an explicit formula for $\lambda^{(0,k,n)}_{m,l}$
	for arbitrary $k$.
\end{example}
\begin{definition}
	Recall from \cref{lem:Elementary-Clifford-Polynomials:Recursively}
	that we defined $\SPolC_{n}≔⟨\epsC^{(n)}_1,\dotsc, \epsC^{(n)}_n⟩$
	as the $\Cl_{I}$-subalgebra of $ \PolC_{I}$
	generated by the elementary $\delC$-symmetric polynomials.
	We have shown $\SPolC_{n} = ⋂_{i=1}^{n-1}\ker \delC_i$
	in \cref{lem:Clifford-Symmetric-polynomials-are-kernel-of-Demazure}.
	We additionally define the $\Cl_{I}$-superalgebras
	$\SPolC_{(k, n)} ≔ ⟨\epsC^{(k,n)}_m\mid 1\leq m\leq m-k⟩ ⊆ \bigcap_{i=k+1}^{n-1}\ker \delC_i$
	and more generally
	\begin{align}
		\notag
		\SPolC_{( k_0, \dotsc, k_l)}
		&≔
			\Cl_{I}\bigl\langle\epsC^{(k_0,k_1)}_{m_0},\epsC^{(k_1,k_2)}_{m_1},
			\dotsc, \epsC^{(k_{l-1},k_l)}_{m_{l-1}} \bigm| \forall j\colon 1\leq m_j\leq k_{j+1}-k_j \bigr\rangle\\*
		&\within{≔}{=}
		\label{eq:LPolC and kernels}
		\smashoperator{\bigcap_{\substack{k_0 < i < k_l \\ i \neq k_0, \dotsc, k_l}}} \ker\delC_{i}
	\end{align}
	In particular, the $\Cl_{I}$-superalgebra of $\delC$-symmetric polynomials
	defined in \cref{lem:Elementary-Clifford-Polynomials:Recursively}
	is $\SPolC_{n} = \SPolC_{(0,n)}$.
\end{definition}
\begin{remark}
	\label{rmk:General-Clifford-Symmetric-polynomials-are-kernel-of-Demazure}
	Note that \cref{lem:Clifford-Symmetric-polynomials-are-kernel-of-Demazure}
	applies to each $\bigl\langle\epsC^{(k_j,k_{j+1})}_{m_j} \mid 1\leq m_j\leq k_{j+1}-k_j\bigr\rangle$.
	The obvious inclusion in \cref{eq:LPolC and kernels} is thus indeed an equality.
\end{remark}

\subsubsection{One step flag varieties}
\label{sec:Clifford-one-step-flag-varieties}%
For $0 < j < k_l$, let $( k_0, \dotsc, \widehat{k_{j}}, \dotsc, k_l)$ denote the tuple $( k_0, \dotsc, k_l)$ with the entry $k_j$ omitted.
Consider
\begin{equation}
	\SPolC_{( k_0, \dotsc, \widehat{k_{j}}, \dotsc, k_l)} = \smashoperator{\bigcap_{\substack{k_0 < i < k_l \\ i \neq k_0, \dotsc, \widehat{k_{j}}, \dotsc, k_l}}} \ker\delC_{i};
\end{equation}
that is, in contrast to \cref{eq:LPolC and kernels}, the intersection with $\ker\delC_{k_j}$ is not omitted.
Clearly there is an inclusion
\begin{equation}
	\SPolC_{( k_0, \dotsc, \widehat{k_{j}}, \dotsc, k_l)}
	\subseteq \SPolC_{( k_1, \dotsc, k_l)}\label{eqn:Clifford-Symmetric-Rings:Inclusion}
\end{equation}
of superalgebras.
\begin{lemma}
	Let $\bm{k}=(0,k, k+1,\dotsc, n)$ for some $k$,
	so that the there are inclusions
	\begin{equation}
		\label{eqn:Clifford-Symmetric-Rings:One-step-Inclusion}
		\SPolC_{(0, \dotsc, k, \dotsc, n)}
		\subseteq\SPolC_{(0,\dotsc, k,k+1,\dotsc, n)}
		\supseteq\SPolC_{(0,\dotsc,  k+1,\dotsc,  n)}.
	\end{equation}
	by \cref{eqn:Clifford-Symmetric-Rings:Inclusion}.
	The two inclusions can be written explicitly in terms of $\delC$-symmetric polynomials,
	namely
	\begin{align}
		\SPolC_{(0, k, n)} &\longinto \SPolC_{(0,k,k+1,n)}\\
		\epsC^{(0,k)}_{m}  &\longmapsto \epsC^{(0,k)}_m \notag\\
		\epsC^{(k,n)}_{m}  &\longmapsto \gamma_{k+1, k+2}\,y_{k+1} \, \epsC^{(k+1,n)}_{m-1}
			+ \gamma_{k+1,k}\gamma_{k+1,k+2}\, \epsC^{(k+1,n)}_m \notag
		\\[.5\baselineskip]
		\SPolC_{(0, k+1, n)} &\longinto \SPolC_{(0,k,k+1,n)}\\
		\epsC^{(0,k+1)}_{m}  &\longmapsto
			\epsC^{(0,k)}_m + \epsC^{(0,k)}_{m-1}\cdot \gamma_{m+1,m}\dotsm\gamma_{k+1,k}\,y_{k+1} \notag\\
		\epsC^{(k+1,n)}_{m}  &\longmapsto \epsC^{(k+1,n)}_{m}. \notag
	\end{align}
\end{lemma}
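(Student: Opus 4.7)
The existence of both inclusions in \eqref{eqn:Clifford-Symmetric-Rings:One-step-Inclusion} is already guaranteed by \eqref{eqn:Clifford-Symmetric-Rings:Inclusion}: the subalgebras $\SPolC_{(0,k,n)}$, $\SPolC_{(0,k+1,n)}$, $\SPolC_{(0,k,k+1,n)}$ all sit canonically inside $\PolC_{I}$, and the middle one is cut out by intersecting fewer kernels than the outer two. What has to be shown is that the images of the chosen generators of the outer algebras under the literal inclusion are precisely the stated $\Cl_{I}$-algebra expressions in the generators of the middle algebra. My plan is to read both formulas off directly from the two recursion formulas for $\delC$-symmetric polynomials that we already have.

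For the first inclusion $\SPolC_{(0,k,n)}\into \SPolC_{(0,k,k+1,n)}$, the generators $\epsC^{(0,k)}_m$ appear in both algebras and go to themselves, so only the image of $\epsC^{(k,n)}_m$ requires explanation. The polynomial $\epsC^{(k,n)}_m$ was defined to be the image of $\epsC^{(n-k)}_m$ under shifting every index $i$ to $i+k$. Applying the same shift to the left recursion of \cref{cor:Elementary-Clifford-Polynomials:Recursively-Left},
\[
	\epsC^{(n-k)}_m = \gamma_{1,2}\,y_1\, \epsC^{(1,n-k)}_{m-1} + \gamma_{2,1}\gamma_{2,3}\,\epsC^{(1,n-k)}_m,
\]
yields
\[
	\epsC^{(k,n)}_m = \gamma_{k+1,k+2}\,y_{k+1}\,\epsC^{(k+1,n)}_{m-1} + \gamma_{k+1,k}\gamma_{k+1,k+2}\,\epsC^{(k+1,n)}_m,
\]
which is exactly the claimed formula, written in the generators $\gamma_{k+1,k+2}y_{k+1}=\epsC^{(k,k+1)}_1$ and $\epsC^{(k+1,n)}_{\bullet}$ of $\SPolC_{(0,k,k+1,n)}$.

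The second inclusion $\SPolC_{(0,k+1,n)}\into \SPolC_{(0,k,k+1,n)}$ is dual: the generators $\epsC^{(k+1,n)}_m$ are shared, and the nontrivial formula concerns $\epsC^{(0,k+1)}_m = \epsC^{(k+1)}_m$. Here I would invoke the right recursion of \cref{lem:Elementary-Clifford-Polynomials:Recursively} at $n \rightsquigarrow k+1$, namely
\[
	\epsC^{(k+1)}_m = \epsC^{(k)}_m + \epsC^{(k)}_{m-1}\bigl(\gamma_{m+1,m}\gamma_{m+1,m+2}\dotsm\gamma_{k,k-1}\gamma_{k,k+1}\bigr)\gamma_{k+1,k}\,y_{k+1},
\]
which is precisely the stated expansion (the product of $\gamma$'s in the lemma being shorthand for the full string above).

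The only real bookkeeping is to check that the $\gamma$-factors arising from the generic recursion agree with the $\gamma$-factors appearing in the lemma after the index shift $i\mapsto i+k$; this is straightforward from the definition $\gamma_{i,i\pm 1}=(\pm \cl_i)^{\pty{i}\pty{i\pm 1}}$, which depends only on local parities, so a global shift in index labels does not alter the pattern. Because both maps are inherited from the ambient inclusion $\PolC_{I}\supseteq\PolC_{I}$ (i.e.\ the identity on polynomials), compatibility with the algebra structure is automatic, and no separate well-definedness argument is needed. The main—indeed only—step is therefore the direct substitution of the two recursion formulas, and I do not anticipate any genuine obstacle beyond careful index tracking.
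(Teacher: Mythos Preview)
Your approach is essentially identical to the paper's: the paper observes that the inclusion follows from the kernel description and then says the explicit coefficients are those listed in \cref{example:Clifford-Symmetric-Polynomials:Lambda-Coefficients}, which in turn merely records the two recursion formulas from \cref{lem:Elementary-Clifford-Polynomials:Recursively} and \cref{cor:Elementary-Clifford-Polynomials:Recursively-Left} that you invoke directly. The paper's proof is no more detailed than yours on the $\gamma$-bookkeeping.
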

\begin{proof}
	A polynomial from $\SPolC_{(0, k, n)}$
	is contained in all $\ker\delC_i$ for $i∈\{1,\dotsc, k-1, k+1,k+2, \dotsc,n\}$
	by \cref{lem:Elementary-Clifford-Polynomials:Recursively}.
	It is a fortiori contained in all $\ker\delC_i$ for $i∈\{1,\dotsc, k-1, k+2\dotsc,n\}$;
	\ie, in $\ker\delC_1\cap\dotsb\ker\delC_{k-1}\cap\ker\delC_{k+2}\cap\dotsb\cap\ker\delC_n$.

	We know from
	\cref{lem:Clifford-Symmetric-polynomials-are-kernel-of-Demazure,rmk:General-Clifford-Symmetric-polynomials-are-kernel-of-Demazure}
	that this intersection equals $\SPolC_{(0, k, k+1, n)}$,
	which has generators $\epsC^{(0, k, k+1, n)}_m$.
	The coefficients for writing elementary $\delC$-symmetric polynomials from $\SPolC_{(0, k, n)}$
	in terms of those polynomials from $\SPolC_{(0,k,k+1,n)}$
	are given in \cref{example:Clifford-Symmetric-Polynomials:Lambda-Coefficients}.
	This proves the statement.
\end{proof}
\begin{remark}
	\label{rmk:Clifford-Symmetric-Polynomials:Bimodule-structure}
	These inclusions turn $\SPolC_{(0,k,k+1,n)}$ into a $\SPolC_{(0, k, n)}$-$\SPolC_{(0, k+1, n)}$-bimodule.
	In the even case, this bimodule structure corresponds to the one
	described in \autocite[(5.17)]{KL:quantum-group-III}.
	In contrast to the notation in \autocite{KL:quantum-group-III},
	we let $\SPolC_{(0, k, n)}$ act from the left
	and $\SPolC_{(0, k+1, n)}$ from the right.
\end{remark}

\subsubsection{Clifford flag rings}
\label{sec:Clifford-Cohomology-Flag-Variety}
Recall the Clifford Grassmann ring $\HGrC_{(m,n)}$
from \cref{def:Clifford-Grassmann-Cohomology}
and the ordinary cohomology ring of partial flag varieties from \cref{prop:Flag-variety-cohomology}.
The following definition generalises both:
\begin{definition}
	\label{def:clifford-flag-cohomology}
	For dimension vector $\bm{k}=(0=k_0\leq k_1\leq\dotsb\leq k_\ell=n)$,
	let the \term{Clifford flag ring}
	be the quotient
	\begin{equation}
		\HGrC_{\bm{k}} ≔ \quot{\SPolC_{(k_1, \ldots, k_l)}}{(\SPolC_{(0,n)})_+}
	\end{equation}
	by the two-sided ideal generated by non-constant $\delC$-symmetric polynomials.
\end{definition}
\begin{remark}
	The graded dimension computations carried out in \cref{lem:graded-dimension-Grassmann-cohomology,cor:graded-dimension-flag-cohomology}
	remain valid for $\HGrC_{\bm{k}}$ if one replaces $\dim_{q,k}$ by $\rk_{q,\Cl_{I}}$.
\end{remark}
\begin{proposition}
	\label{exm:clifford-flag-cohomology-generalises-grassmannians}
	The quotient $\SPolC_{(0,k,n)}/(\SPolC_{(0,n)})_+$ is isomorphic to the
	Clifford Grassmann ring $\HGrC_{(k,n)}$ defined in \cref{def:Clifford-Grassmann-Cohomology}.
\end{proposition}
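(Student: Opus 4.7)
The plan is to identify both sides with a common presentation, using the two structural identities at hand: the decomposition $\epsC^{(n)}_m = \sum_l \epsC^{(k)}_l \lambda^{(0,k,n)}_{m,l} \epsC^{(k,n)}_{m-l}$ from Definition \ref{definition:Clifford-Symmetric-Polynomials:Lambda-Coefficients} and the invertible $E$-$H$-identity of Proposition \ref{prop:Complete-Clifford-Polynomials:Vanishing-Relation}.

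Working inside $\SPolC_{(0,k,n)}/(\SPolC_{(0,n)})_+$, each $\epsC^{(n)}_m$ vanishes, yielding $n$ relations among the generators $\epsC^{(k)}_l$ and $\epsC^{(k,n)}_m$ of $\SPolC_{(0,k,n)}$. For $1 \leq m \leq n-k$, the coefficient $\lambda^{(0,k,n)}_{m,0}$ is (by inspection of the recursive formula of \cref{cor:Elementary-Clifford-Polynomials:Recursively-Left} and \cref{example:Clifford-Symmetric-Polynomials:Lambda-Coefficients}) a product of $\gamma$-factors, hence a unit in $\Cl_I$. So these $n-k$ relations can be solved inductively to express each $\epsC^{(k,n)}_m$ as a Clifford-polynomial in $\epsC^{(k)}_1,\ldots,\epsC^{(k)}_k$. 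The quotient is therefore generated as a $\Cl_I$-algebra by the $\epsC^{(k)}_l$'s alone. The remaining $k$ relations, $\epsC^{(n)}_m=0$ for $n-k<m\leq n$, become relations purely among the $\epsC^{(k)}_l$'s after substitution, since $\epsC^{(k,n)}_{m-l}$ vanishes whenever $m-l>n-k$.

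The key step is to identify these $k$ remaining relations with the vanishing of $\hC^{(n)}_{n-k+1},\ldots,\hC^{(n)}_n$ that defines $\HGrC_{(k,n)}$. Here Proposition \ref{prop:Complete-Clifford-Polynomials:Vanishing-Relation} is crucial: it provides a triangular, unit-preserving transformation between the families $(\epsC^{(n)}_l)$ and $(\hC^{(n)}_j)$ inside $\SPolC_n$. After substituting the $\epsC^{(k,n)}_m$ as obtained in the previous step, this transformation shows that setting $\epsC^{(n)}_m=0$ for $m>n-k$ is equivalent to setting $\hC^{(n)}_j=0$ for $j>n-k$. This yields a surjection $\HGrC_{(k,n)} \twoheadrightarrow \SPolC_{(0,k,n)}/(\SPolC_{(0,n)})_+$. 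To promote it to an isomorphism I would compare graded $\Cl_I$-ranks: the flag subalgebra $\SPolC_{(0,k,n)}$ should be free of graded rank $\binom{n}{k}_q$ over $\SPolC_n$ by an extension of the Schubert-polynomial argument of Theorem \ref{prop:Clifford-Polynomials-free} to the partial-flag setting, while the same rank for $\HGrC_{(k,n)}$ follows from the Morita equivalence of Theorem \ref{prop:grassmann-cyclotomic-morita-eq}.

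The main obstacle is bookkeeping: while the generating-function heuristic $E(t)H(-t)=1$ makes the conversion between the two presentations transparent in the even case, here one must carry the $\gamma$-, $\kappa$-, and $\tilde\kappa$-factors through noncommutative Clifford arithmetic and verify they cancel correctly. A secondary obstacle is the auxiliary freeness result for $\SPolC_{(0,k,n)}$ over $\SPolC_n$: the natural approach, imitating Theorem \ref{prop:Clifford-Polynomials-free} using Schubert polynomials indexed by minimal coset representatives in $S_k\times S_{n-k}\backslash S_n$, is feasible but needs to be written out carefully.
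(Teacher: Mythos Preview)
Your proposal uses the same two ingredients as the paper—the $\lambda$-expansion of Definition~\ref{definition:Clifford-Symmetric-Polynomials:Lambda-Coefficients} and the $E$--$H$ identity of Proposition~\ref{prop:Complete-Clifford-Polynomials:Vanishing-Relation}—but organises them less directly and leaves a genuine gap in the key step.

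The paper's argument is a one-shot comparison. In the quotient the relations $\epsC^{(n)}_m=0$ read
\[
\sum_{l=0}^{m}\epsC^{(0,k)}_l\,\lambda^{(0,k,n)}_{m,l}\,\epsC^{(k,n)}_{m-l}=\delta_{m,0},
\]
while Proposition~\ref{prop:Complete-Clifford-Polynomials:Vanishing-Relation}, applied to the block of variables $y_{k+1},\dotsc,y_n$, gives
\[
\sum_{l=0}^{m}(-1)^l\,\hC^{(k,n)}_{m-l}\,\tilde\kappa_{k,k+1}\dotsm\tilde\kappa_{k,k+l}\,\epsC^{(k,n)}_l=\delta_{m,0}.
\]
Matching these two triangular systems identifies $\epsC^{(0,k)}_l$ with $\hC^{(k,n)}_l$ up to explicit Clifford units, and the isomorphism drops out immediately—no elimination, no rank count.

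Your route has two problems. First, you invoke $\hC^{(n)}_j$, but once every $\epsC^{(n)}_m$ is set to zero, every $\hC^{(n)}_j$ vanishes automatically (it is a polynomial in the $\epsC^{(n)}_m$), so the statement ``setting $\epsC^{(n)}_m=0$ for $m>n-k$ is equivalent to setting $\hC^{(n)}_j=0$ for $j>n-k$'' is vacuous. The complete symmetric polynomials that actually carry the content live in a \emph{block} of variables—$\hC^{(k,n)}_l$ in the paper's version, or symmetrically $\hC^{(0,k)}_l$ if you eliminate the other way—not in all $n$ variables. Second, even after repairing this, your final step needs freeness of $\SPolC_{(0,k,n)}$ over $\SPolC_{(0,n)}$, which is not established in the paper; the direct coefficient comparison above makes this entirely unnecessary.
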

\begin{proof}
	We may expand the generators $\epsC^{(0,n)}_k$
	as in \cref{eqn:Clifford-Flag-Varieties:Expansion-Coefficients}.
	In the quotient, we therefore obtain relations
	\[
		\sum_{l=0}^{m}
		\epsC^{(0,k)}_l \ \lambda^{(0,k,n)}_{m,l}\, \epsC^{(k,n)}_{m-l}
		= \delta_{m,0}.
	\]
	Comparing coefficients with those of the identity
	\begin{equation*}
		\sum_{l=0}^{m} (-1)^l\,\hC^{(k,n)}_{m-l}\,\tilde{\kappa}_{k,k+1}\dotsm\tilde{\kappa}_{k, k+l}\epsC^{(k,n)}_l = \delta_{m,0}
	\end{equation*}
	from \cref{prop:Complete-Clifford-Polynomials:Vanishing-Relation}
	implies that
	\begin{equation}
		(-1)^l\,\hC^{(k,n)}_{m-l}\,\tilde{\kappa}_{k,k+1}\dotsm\tilde{\kappa}_{k, k+l} = \epsC^{(0,k)}_{m-l}\,\lambda^{(0,k,n)}_{m,l};
	\end{equation}
	we thus indeed have an isomorphism $\SPolC_{(0,k,n)}/(\SPolC_{(0,n)})_+ \isom \HGrC_{(k, n)}$.
\end{proof}
We have arrived at a super-generalisation
for the cohomology rings of Flag varieties.
In the ordinary set-up, it has been proven in \autocite{KL:quantum-group-III}
that these rings admit an action
by the Kac-Moody 2-category from \autocite{Bru:Kac-Moody}.
The latter has a super-analogue constructed in \autocite{Bru:Super-Kac-Moody}.
A natural question to finish this manuscript is if one can construct an action of the Kac-Moody 2-supercategory from \autocite{Bru:Super-Kac-Moody}.


\printbibliography
\end{document}